\newtheorem{theorem}{Theorem}[section]
\newtheorem{lemma}{Lemma}[section]
\newtheorem{corollary}{Corollary}[section]
\newtheorem{proposition}{Proposition}[section]
\newtheorem{remark}{Remark}[section]
\newtheorem{definition}{Definition}[section]
\begin{document}

\title[On the emergent dynamics of the infinite set of Kuramoto oscillators]{On the emergent dynamics of the infinite set of Kuramoto oscillators}


\author[1,2]{\fnm{Seung-Yeal} \sur{Ha}}\email{syha@snu.ac.kr}

\author*[1]{\fnm{Euntaek} \sur{Lee}}\email{tngkrqks21@snu.ac.kr}

\author[3]{\fnm{Woojoo} \sur{Shim}}\email{wjshim@knu.ac.kr}

\affil[1]{\orgdiv{Department of Mathematical Sciences}, \orgname{Seoul National University}, \orgaddress{ \city{Seoul}, \postcode{08826}, \country{Republic of Korea}}}

\affil[2]{\orgdiv{Research Institute of Mathematics}, \orgname{Seoul National University}, \orgaddress{ \city{Seoul}, \postcode{08826}, \country{Republic of Korea}}}

\affil[3]{\orgdiv{Department of Mathematics Education}, \orgname{Kyungpook National University}, \orgaddress{ \city{Daegu}, \postcode{41566}, \country{Republic of Korea}}}


\abstract{We propose an infinite Kuramoto model for a countably infinite set of Kuramoto oscillators and study its emergent dynamics for two classes of network topologies. For a class of symmetric and row(or column)-summable network topology, we show that a homogeneous ensemble exhibits complete synchronization, and the infinite Kuramoto model can cast as a gradient flow, whereas we obtain a weak synchronization estimate, namely practical synchronization for a heterogeneous ensemble. Unlike with the finite Kuramoto model, phase diameter can be constant for some class of network topologies which is a novel feature of the infinite model. We also consider a second class of network topology (so-called a sender network) in which coupling strengths are proportional to a constant that depends only on sender's index number. For this network topology, we have a better control on emergent dynamics. For a homogeneous ensemble, there are only two possible asymptotic states, complete phase synchrony or bi-cluster configuration in any positive coupling strengths. In contrast, for a heterogeneous ensemble, complete synchronization occurs exponentially fast for a class of initial configuration confined in a quarter arc. }

\keywords{ Asymptotic behavior, concentrate phenomena, Kuramoto model, infinite particle system.}


\pacs[MSC Classification]{34D05, 34G20, 70F45.}

\maketitle
\section{Introduction}\label{sec:1} 
\setcounter{equation}{0} 
Synchronization is one of collective behaviors in which weakly coupled oscillators adjust their rhythms via mutual interactions, and it is often observed in oscillatory systems such as the collection of fireflies, neurons and pacemaker cells, etc. However, despite its ubiquitous presence, its rigorous mathematical studies were begun in only a half century ago by two pioneers,  Arthur Winfree in 1967 \cite{Wi1, Wi2} and Yoshiki Kuramoto \cite{Ku} in 1975. Since then, synchronization has been extensively investigated in diverse scientific disciplines such as an applied mathematics, neuroscience and statistical physics, etc. We refer to survey articles and books \cite{A-B, A-B-F-H-K, D-B2, H-K-P-Z, P-R-K, Pe, St} for a brief introduction to the subject.  To fix the idea, we restrict our discussion to Kuramoto oscillators whose dynamics is governed by the sum of sinusoidal coupling of phase differences. 

Consider a lattice $\Lambda \subset \mathbb{R}^d$ with $N$ lattice points (or nodes), and we assume that Kuramoto oscillators are stationed on each lattice point, and interactions are all-to-all  with a uniform strength $\frac{\kappa}{N}$. To set up stage, let $\theta_i = \theta_i(t)$ be the phase of the Kuramoto oscillator at the $i$-th lattice point. In this setting, the phase dynamics is governed by the Cauchy problem to the (finite) Kuramoto model:
\begin{equation} \label{Ku}
	\begin{cases}
		\displaystyle {\dot \theta}_{i} = \nu_i +  \sum_{j\in[N]} \frac{\kappa}{N} \sin(\theta_j - \theta_i), \quad t > 0, \\
		\displaystyle \theta_i(0) = \theta_i^{\text{in}}, \quad  i \in [N]:= \{1, \ldots, N \},
	\end{cases}
\end{equation}
where $\nu_i$ is the natural frequency of the $i$-th oscillator.  Since the right-hand side of \eqref{Ku} is uniformly bounded and Lipschitz continuous, the standard Cauchy-Lipschitz theory guarantees a global well-posedness of smooth solutions. Thus, what matters for \eqref{Ku} lies on the emergent dynamics. In fact, the emergent dynamics of \eqref{Ku} has been extensively studied in literature, to name a few \cite{ B-C-M, B-D-P, C-S, D-X, D-X2, D-B1, D-B2, H-N-P, H-R, L-X, V-M0, V-M1, V-M2} from diverse scientific disciplines in last decades. In this paper, we are interested in the Kuramoto dynamics on the infinitely extended lattice, i.e., the number of Kuramoto oscillators is equivalent to the cardinality of the natural numbers. More specifically, we address the following set of questions: \newline
\begin{itemize}
	\item
	What is the suitable system describing the dynamics of an infinite number of Kuramoto oscillators?
	\vspace{0.2cm}
	\item
	If such a dynamical system exists, under what conditions on system parameters and initial data, can we rigorously show the emergent collective dynamics?
\end{itemize}
\vspace{0.2cm}
The main purpose of this paper is to answer the above proposed questions. In collective dynamics community, we often approximate infinite systems with all-to-all couplings by corresponding Vlasov type equations (see \cite{La}) which arise from large $N$-oscillator limit. In this way, mean-field approach can give approximate results for infinite system under consideration. Therefore, to get the exact result on the dynamics of infinite set of Kuramoto oscillators, we are forced to study the infinite set of ordinary differential equations as it is. In this regard, we propose the following natural extension of the finite model \eqref{Ku}:
\begin{equation}\label{IKM}
	\dot{\theta}_{i}=\nu_i+\sum_{j\in\mathbb{N}} \kappa_{ij}\sin\left(\theta_j-\theta_i \right), \quad t>0, \quad \forall~ i \in  \mathbb{N}, 
\end{equation}
where $\kappa_{ij}$ is the coupling strength between the $i$-th and $j$-th oscillators satisfying nonnegativity and row-summability:
\begin{equation}\label{A-1}
	K = (\kappa_{ij}), \quad \kappa_{ij} \geq 0,\quad  \| K \|_{\infty, 1} := \sup_{i\in\mathbb{N}} \sum_{j\in\mathbb{N}} \kappa_{ij}  <\infty.
\end{equation}
Note that without the coupling strength $\kappa_{ij}$, the infinite sum in the R.H.S. of \eqref{IKM} will not be well-defined. So introduction of such weight is needed. Moreover, unlike to the Kuramoto model, uniform coupling strength $\kappa_{ij} = \kappa$ does not satisfy the condition $\eqref{A-1}_3$. Of course, it is not complete new to study such an infinite set of ordinary differential equations. For example, coagulation and fragmentation process for polymer can be described by the infinite number of ODEs (see  \cite{B-C-P, Du, Sl}) and {infinite lattice Kuramoto model \cite{Br}}. Recently, Wang and Xue \cite{W-X} studied the flocking behaviors of the infinite number of Cucker-Smale particles, and they found that almost the same results in   \cite{Cu-S, H-Liu, H-T} for the original Cucker-Smale model can be obtained.  As first observed by the first author and his collaborators in \cite{H-L-R-S}, the first-order Kuramoto model can be lifted to the Cucker-Smale model by introducing auxiliary frequency variables. Thus, it is quite reasonable to study analogous study for the Kuramoto model without resorting on the corresponding mean-field equation.  The global well-posedness of \eqref{IKM} on the Banach space $(\ell^{\infty},~\| \cdot \|_{\infty})$ can be followed from the abstract Cauchy-Lipschitz theory together with the Lipschitz continuity of the R.H.S. of \eqref{IKM}  (see Proposition \ref{P2.2} and Lemma \ref{LA-1}). For the special situation:
\[ \kappa_{ij} \equiv 0,  \quad \theta_i  \equiv 0, \quad   \max\{i,j\} \geq N+1, \]
it is easy to see that the Kuramoto model \eqref{Ku} corresponds to the special case of the proposed infinite model \eqref{IKM}. Hence, whether the infinite system \eqref{IKM} can exhibit the emergent dynamics as in the Kuramoto model \eqref{Ku} or not will be a tempting question. Moreover, it would be very interesting to analyze distinct features which cannot be seen in the Kuramoto model with a finite system size. 

In what follows, we briefly discuss our main results documented in the following sections from Section \ref{sec:3} to Section \ref{sec:5}.  Let $\mathbb{N} = \{ 1, 2, \ldots \}$ be the set of all natural numbers. For the emergent dynamics of the infinite Kuramoto model \eqref{IKM} -- \eqref{A-1}, we consider two types of coupling gain matrix $K = (\kappa_{ij})$: 
\begin{align}
	\begin{aligned} \label{A-2}
		& \mbox{Row-summable network}:~\kappa_{ij} > 0,\quad i,j \in {\mathbb N},\quad \| K \|_{\infty. 1} < \infty,  \\
		& \mbox{Sender row-summable network}:~\kappa_{ij} = \kappa_{j}\geq 0,\quad  i,j \in {\mathbb N},\quad \| K \|_{\infty, 1} < \infty.   
	\end{aligned}
\end{align}
First, we consider positive and row-summability network topology $\eqref{A-2}_1$.  For a homogeneous ensemble with  the same natural frequency $\nu_i = \nu$, thanks to translational invariance property of \eqref{IKM}, we may assume that the common natural frequency $\nu$ is zero and \eqref{IKM} reduces to 
\begin{equation}\label{A-3}
	\dot{\theta}_{i}= \sum_{j\in\mathbb{N}} \kappa_{ij}\sin\left(\theta_j-\theta_i \right), \quad t>0, \quad \forall~ i \in  \mathbb{N}.
\end{equation}
In this case, depending on suitable conditions for the network topology $K = (\kappa_{ij})$, the phase diameter can be constant (see Corollary \ref{C3.1} and Proposition \ref{L3.2}, respectively). In particular, we can find an explicit example of non-decreasing phase diameter for some class of coupling gain matrix $K$. This is certainly a novel feature of the infinite model which cannot be seen in a finite system (see also Remark \ref{R3.2} and Corollary \ref{C3.1}). As can be seen in Proposition \ref{P2.1}, a gradient flow formulation for \eqref{Ku} plays a key role in the rigorous verificaion of phase-locking for a generic initial data in a large coupling regime \cite{D-X2, H-K-R, H-R}. Likewise,  the infinite system \eqref{A-3} can also be written as a gradient flow on a Banach space $\ell^2$ with the potential $P$ (see Proposition \ref{P3.4}):
\[  P(\Theta) = \frac{1}{2}\sum_{i,j\in\mathbb{N}} \kappa_{ij} (1-\cos(\theta_i - \theta_j)).   \]
Although we cannot use the Lojasiewicz gradient inequality in \cite{H-J} as it is, we can still use $P$ as a Lyapunov functional to derive complete synchronization (see Theorem \ref{T3.1}): 
\begin{equation} \label{A-4}
	\lim_{t \to \infty} \sup_{i,j\in\mathbb{N}} |{\dot \theta}_i(t) - {\dot \theta}_j(t) | = 0.
\end{equation}
On the other hand, for a heterogeneous ensemble with distinct natural frequencies, we can obtain a practical synchronization under suitable conditions on the coupling gain matrix $K = (\kappa_{ij})$ (Theorem \ref{T4.1}):
\[
\limsup_{t\to\infty}  \sup_{i,j\in\mathbb{N}} |\theta_i(t) - \theta_j(t) | \le\sin^{-1} \Big( {\mathcal O}(1) \frac{ {\mathcal D}\left(\mathcal{V}\right)}{\| K \|_{-\infty, 1}} \Big).
\]
Unfortunately, the complete synchronization estimate \eqref{A-4} for a heterogeneous ensemble is not available yet.   \newline

Second, we consider a row-summable sender network topology $\eqref{A-2}_2$. In this case, the infinite Kuramoto model reads as 
\begin{equation}\label{A-5}
	\dot{\theta}_{i}= \nu_i + \sum_{j\in\mathbb{N}} \kappa_j\sin\left(\theta_j-\theta_{i}\right), \quad t>0, \quad i \in {\mathbb N}.
\end{equation}
Compared to the aforementioned symmetric and summable network topology, we have better controls on the emergent dynamics. For a homogeneous ensemble, there might be two possible asymptotic states (one-point phase synchrony or bi-cluster configuration).  More precisely, let $\Theta$ be a solution to \eqref{A-5} with asymptotic configuration $\Theta^\infty=(\theta_1^\infty,\theta_2^\infty,\ldots)$. Then, we have
\[ \theta_{i}^{\infty}\in\left\{ \theta_{0}\right\} \cup\left\{ \theta_{0}\pm \kappa_{i}\pi\ |\ i\in\mathbb{N}\right\} \cup\left\{ \theta_{0}\pm\left(1-\kappa_{i}\right)\pi\ |\ i\in\mathbb{N}\right\}, \]
where 
\[ \theta_0 :=  \sum_{i\in\mathbb{N}} \kappa_{i}\theta_{i}^{\text{in}}. \]
We refer to Theorem \ref{T5.1} and Corollary \ref{C5.2} for details.  On the other hand, for a heterogeneous ensemble, we can rewrite system \eqref{A-5} into the second-order model with row-dimensional initial data:
\begin{equation} \label{A-6}
	\begin{cases}
		\displaystyle	\dot{\theta}_{i}=\omega_{i},\quad t>0,\quad\forall~ i\in\mathbb{N},\\
		\displaystyle	\dot{\omega}_{i}=\sum_{j\in\mathbb{N}} \kappa_j\cos\left(\theta_i-\theta_j\right)\left(\omega_j-\omega_{i}\right), \\
		\displaystyle	\theta_{i}(0)=\theta_{i}^{\text{in}}\in\mathbb{R},\quad\omega_{i}(0)=\nu_{i}+\sum_{j\in\mathbb{N}} \kappa_j\sin\left(\theta_j^{\text{in}}-\theta_{i}^{\text{in}}\right),
	\end{cases}
\end{equation}
where 
\[ \Theta^{\text{in}} = (\theta_1^{\text{in}}, \theta_2^{\text{in}}, \ldots ) \in \ell^{\infty}, \quad {\mathcal V} = (\nu_1, \nu_2, \ldots) \in \ell^{\infty}. \]
We set 
\[  {\mathcal W} := (\omega_1, \omega_2,  \ldots) \quad \mbox{and} \quad  {\mathcal D}({\mathcal W}) : = \sup_{m, n} |\omega_m - \omega_n|.  \]
In this case, under some restricted class of initial phase configuration confined in a quarter arc, we can show that the frequency diameter ${\mathcal D}({\mathcal W})$ decays to zero exponentially fast (see Theorem \ref{T5.2}).  \newline

The rest of this paper is organized as follows. In Section \ref{sec:2},  we briefly review the emergent dynamics of the finite Kuramoto model and study basic properties of the infinite Kuramoto model such as conservation law, translational invariance and several a priori estimates.  In Section \ref{sec:3} and Section \ref{sec:4},  we study emergent dynamics of \eqref{IKM} with a symmetric and row-summable network  topologies. In Section \ref{sec:5}, we investigate the complete synchronization of the infinite Kuramoto model with a sender network topology.  Finally, Section \ref{sec:6} is devoted to a brief summary of main results and discussion on some remaining issues for a future work. 

\vspace{0.5cm}

\noindent {\bf Notation}:~
Throughout the paper, we write the phase configuration vector and natural frequency vector as
\begin{align*}
	\begin{aligned}
		& \Theta_N := (\theta_1,\ldots,\theta_N), \quad \Theta := (\theta_1,\theta_2,\ldots),\\ & {\mathcal V}_N := (\nu_1,\ldots,\nu_N), \quad {\mathcal V}: = (\nu_1,\nu_2,\ldots),
	\end{aligned}
\end{align*}
and  we denote the set $\{1,\ldots,N\}$ by $[N]$ for simplicity. For $A=(a_1,a_2,\ldots)\in \mathbb{R}^\mathbb{N}$ and $p\in[1,\infty]$,  we set 
\[
\| { A} \|_p := \begin{cases}
	\displaystyle \Big( \sum_{i\in\mathbb{N}} |a_i|^p \Big)^{\frac{1}{p}}, \quad & 1 \leq p < \infty, \\
	\displaystyle  \sup_{i\in\mathbb{N}} |a_i|, \quad & p = \infty,
\end{cases}
\]
and denote $\ell^p=\ell^p({\mathbb N})$ the collection of all sequences with a finite $p$-th power sum:
\[ \ell^p({\mathbb N}) : = \Big \{ A\in\mathbb{R}^\mathbb{N} :~\| A \|_p  < \infty \Big \}, \quad p\in[1,\infty]. \]
Similarly, for every infinite matrix $K=(\kappa_{ij})\in\mathbb{R}^{\mathbb{N}\times \mathbb{N}}$  and $1\leq p,q\leq \infty$, we set 
\[ \|K\|_{p,q}:=\begin{dcases}
	\displaystyle\left|\sum_{{i\in \mathbb{N}}}\|(\kappa_{ij})_j\|_q^p\right|^{\frac{1}{p}} & (1\leq p<\infty),\\
	\displaystyle\sup_{i\in\mathbb{N}}\|(\kappa_{ij})_j\|_q& (p=\infty),
\end{dcases} \] 
and denote 
\[\ell^{p,q}:=\left\{K=(\kappa_{ij}):\| K \|_{p,q}<\infty \right\}, \]
which also becomes a normed vector space of infinite matrices. Finally, for every real vectors $X_N$ and $X$ given by 
\begin{align*}
	X_N=(x_1,\ldots,x_N)\in\mathbb{R}^N,\quad X=(x_1,x_2,\ldots)\in\mathbb{R}^\mathbb{N},
\end{align*}
we denote the supremum of the difference between their elements by
\begin{align*}
	{\mathcal D}(X_N) := \max_{i,j\in[N]} | x_i  - x_j |, \quad  {\mathcal D}(X) := \sup_{i,j\in\mathbb{N}} | x_i  - x_j |,
\end{align*}
and call the diameter of $X_N$ and $X$, respectively.
\section{Preliminaries}\label{sec:2}
\setcounter{equation}{0}
In this section, we study basic properties of the Kuramoto model on static networks with finite and infinite nodes. 
\subsection{Kuramoto model for a finite ensemble} \label{sec:2.1}
Consider the Cauchy problem to the Kuramoto model with a finite system size \cite{C-H-J-K, H-H-K, H-L-X, D-X2}:
\begin{equation}  \label{B-1}
	\begin{cases} 
		\displaystyle  \dot{\theta}_{i}=\nu_{i}+ \sum_{j\in[N]} \kappa_{ij}\sin\left(\theta_j-\theta_{i}\right),\quad t>0,\\
		\displaystyle \theta_{i}(0)=\theta_{i}^{\text{in}}, \quad i\in [N],
	\end{cases}
\end{equation}
where  $\kappa_{ij}$ is a nonnegative symmetric constant which denotes the strength between the $i$-th and $j$-th oscillators:
\begin{equation} \label{B-1-1}
	\kappa_{ij} = \kappa_{ji} \geq 0, \quad i, j \in [N]. 
\end{equation}
First, we recall some terminologies on emergent dynamics in the following definition. \newline
\begin{definition}\label{D2.1}
	Let $\Theta_N$ be a solution to \eqref{B-1} -- \eqref{B-1-1}. 
	\begin{enumerate}
		\item The state $\Theta$ is phase-locked if the phase differences are constant in time:
		\[\theta_i(t)-\theta_j(t)\equiv\theta_{ij},\quad t \geq 0, \quad  i,j\in[N].\]
		\item The state $\Theta$ achieves asymptotic phase-locking if and only if 
		\[\exists~\theta_{ij}^\infty=\lim_{t\to\infty}  (\theta_i(t)-\theta_j(t)),\quad i,j\in [N]. \]
		\item The state $\Theta$ achieves complete synchronization if and only if 
		\[ \lim_{t\to\infty}  {\mathcal D}({\dot \Theta}_N(t)) = 0. \]
	\end{enumerate}
\end{definition}
Next, we study basic preliminaries for \eqref{B-1} on conservation law and emergent dynamics.  For this, we set 
\begin{equation} \label{B-1-2}
	{\mathcal C}(t) :=  \sum_{i\in[N]} \theta_i - t \sum_{i\in[N]} \nu_i, \quad t \geq 0. 
\end{equation}

\begin{proposition}\label{P2.1}
	\emph{\cite{H-K-R, H-R}}
	Let $\Theta_N=(\theta_1,\ldots,\theta_N)$ be a solution to \eqref{B-1} -- \eqref{B-1-1}. Then, the following assertions hold.
	\begin{enumerate}
		\item
		(Balanced law):~The functional ${\mathcal C}$ in \eqref{B-1-2} is conserved along the flow \eqref{B-1}.
		\[ {\mathcal C}(t) = {\mathcal C}(0), \quad t \geq 0. \]
		\item
		(A gradient flow formulation):~If we define a potential $P_N = P_N(\Theta_N)$:
		\begin{equation*}\label{B-1-2-1}
			P_N(\Theta_N) :=  -\sum_{l\in[N]} \nu_l  \theta_l  + \frac{1}{2} \sum_{k,l\in[N]} \kappa_{kl} (1 - \cos (\theta_{k}-\theta_{l} )),
		\end{equation*}
		system \eqref{B-1} -- \eqref{B-1-1} can be rewritten as a gradient flow:
		\[ \partial_t \Theta_N = -\nabla_{\Theta_N} P_N(\Theta_N), \quad t > 0.  \]
		\item
		Suppose that network topology, natural freqencies and initial data satisfy 
		\begin{equation*} \label{B-1-3}
			\kappa_{ij} = \frac{\kappa}{N}, \quad \sum_{i\in[N]} \nu_i = 0, \quad  { R_0 := \Big|\frac{1}{N} \sum_{k \in[N]} e^{{\mathrm i} \theta_{k}^{\text{in}}}  \Big| > 0,} \quad \kappa > \frac{1.6}{R_0^2} {\mathcal D}({\mathcal V}_N).
		\end{equation*}
		Then, there exists an equilibrium state $\Theta_N^{\infty} =(\theta_1^\infty,\ldots,\theta_N^\infty)$ such that 
		\[ \lim_{t \to \infty} \| \Theta_N(t) - \Theta_N^{\infty} \|_{\infty} = 0. \]
	\end{enumerate}
\end{proposition}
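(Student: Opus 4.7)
For parts (1) and (2), I would rely on direct manipulation together with the symmetry assumption \eqref{B-1-1}. Summing the equations in \eqref{B-1} over $i \in [N]$, the interaction double sum $\sum_{i,j \in [N]} \kappa_{ij} \sin(\theta_j - \theta_i)$ vanishes by swapping indices and using $\kappa_{ij} = \kappa_{ji}$ together with the oddness of sine, so $\frac{d}{dt}\sum_{i} \theta_i = \sum_i \nu_i$. Integrating in time yields $\mathcal{C}(t) \equiv \mathcal{C}(0)$. For part (2), I would differentiate $P_N$ in $\theta_i$: the linear term contributes $-\nu_i$, and only pairs $(k,l)$ with $k = i$ or $l = i$ contribute to the interaction term, giving
\[
\partial_{\theta_i} P_N = -\nu_i + \tfrac{1}{2}\sum_{l} \kappa_{il}\sin(\theta_i - \theta_l) - \tfrac{1}{2}\sum_{k} \kappa_{ki}\sin(\theta_k - \theta_i).
\]
Using $\kappa_{ki} = \kappa_{ik}$ the last two sums combine into $\sum_{j} \kappa_{ij}\sin(\theta_i - \theta_j)$, and thus $-\partial_{\theta_i} P_N = \nu_i + \sum_{j}\kappa_{ij}\sin(\theta_j - \theta_i) = \dot\theta_i$.

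For part (3), which is the substantive claim, I would introduce the Kuramoto order parameter
\[
R(t)\, e^{\mathrm{i}\phi(t)} := \frac{1}{N}\sum_{k\in[N]} e^{\mathrm{i}\theta_k(t)},
\]
so that \eqref{B-1} with $\kappa_{ij}=\kappa/N$ reduces to $\dot\theta_i = \nu_i - \kappa R \sin(\theta_i - \phi)$. The strategy then splits into two stages. In the first stage, I would prove a uniform lower bound $R(t) \geq R_* > 0$ by deriving a differential inequality for $R$ (via the identity $R\dot R = \frac{1}{N}\sum_k \cos(\theta_k - \phi)\dot\theta_k$) and exploiting $\sum_i \nu_i = 0$ to separate the restoring term $\kappa R(1-R^2)$-like contribution from the perturbation, which is controlled in terms of $\mathcal{D}(\mathcal V_N)$. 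The threshold $\kappa R_0^2 > 1.6\,\mathcal{D}(\mathcal V_N)$ is precisely what one needs to close a bootstrap argument ensuring that $R$ never escapes downward from an interval determined by $R_0$.

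Once $R(t) \geq R_*$ is secured, the second stage is to show that the phases are eventually confined to an arc strictly less than $\pi/2$ (or $\pi$, depending on the version used). In such a confined region, $\cos(\theta_i - \theta_j) > 0$ uniformly, which renders the coupling cooperative. Combined with the gradient flow formulation from part (2), I would invoke LaSalle's invariance principle together with the real-analyticity of $P_N$ via a Lojasiewicz gradient inequality of the form
\[
\|\nabla P_N(\Theta_N)\| \geq c \, |P_N(\Theta_N) - P_N(\Theta_N^\infty)|^{\alpha}
\]
near any critical point, to upgrade weak convergence of orbits into actual convergence to a single equilibrium $\Theta_N^\infty$ in the $\ell^\infty$-norm.

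The main obstacle is the sharp lower bound on $R(t)$ in the first stage, since the naive Gronwall-type estimate fails to yield the explicit constant $1.6/R_0^2$. Overcoming this requires a careful phase-plane or differential-inequality analysis of $R$ together with a bootstrap, tracking how the perturbation induced by $\mathcal{D}(\mathcal V_N)$ competes with the nonlinear self-restoring mechanism of the order parameter equation. The confinement-plus-Lojasiewicz step in the second stage is by now fairly standard once a positive lower bound on $R$ is in hand.
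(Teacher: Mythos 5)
Your proofs of (1) and (2) coincide with the paper's: summing over $i$ and index-swapping with the symmetry $\kappa_{ij}=\kappa_{ji}$ for the balanced law, and direct differentiation of $P_N$ with the same symmetry to recombine the two half-sums into $\sum_j \kappa_{ij}\sin(\theta_i-\theta_j)$, giving $\dot\Theta_N=-\nabla_{\Theta_N}P_N$. For (3) the paper itself offers only a sketch deferring to \cite{H-R} (uniform boundedness of the configuration, then the gradient-flow structure and analyticity of $P_N$), and your outline — order-parameter lower bound via a bootstrap under $\kappa R_0^2>1.6\,\mathcal{D}(\mathcal{V}_N)$, eventual confinement, then the gradient-flow/Lojasiewicz (analyticity) argument to get convergence to a single equilibrium — is precisely the strategy of that cited reference, so you are on essentially the same route, with the same caveat that, like the paper, you have not actually carried out the delicate order-parameter estimate that produces the explicit constant.
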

\begin{proof}
	\noindent (i)~We  take sum \eqref{B-1} over all  $i$ and use \eqref{B-1-1} to get 
	\[  \frac{d}{dt} \sum_{i\in[N]} \theta_i = \sum_{i\in[N]} \nu_{i} + \sum_{i,j\in[N]}\kappa_{ij}\sin (\theta_j-\theta_{i}) = \sum_{i\in[N]} \nu_{i}.  \]
	This yields the desired conservation law. \newline

	\noindent (ii)~For a fixed $i \in [N]$,  we rewrite the potential $P_N$ as 
	\begin{align*}
		\begin{aligned}
			P_N(\Theta_N) &= -\nu_i  \theta_i +  \frac{1}{2} \sum_{l\in[N]} \kappa_{il} (1- \cos(\theta_i - \theta_l)) +  \frac{1}{2} \sum_{l\in[N]} \kappa_{li} (1- \cos(\theta_l - \theta_i)) \\
			&- \sum_{l\in[N]-\{i\}} \nu_l \theta_l   + \frac{1}{2} \sum_{k,l\in[N]-\{i\}} \kappa_{kl} (1- \cos(\theta_k - \theta_l)).
		\end{aligned}
	\end{align*}
	Now, we differentiate the above relation with respect to $\theta_i$ to find 
	\begin{align*}
		\begin{aligned}
			\partial_{\theta_i} P_N(\Theta_N) &= -\nu_i +  \frac{1}{2} \sum_{l\in[N]}  \kappa_{il} \sin(\theta_i - \theta_l) -\frac{1}{2} \sum_{l\in[N]} \kappa_{li} \sin(\theta_l - \theta_i) \\
			&= -\nu_i  -  \sum_{l\in[N]} \Big( \frac{\kappa_{il} + \kappa_{li}}{2} \Big) \sin(\theta_l - \theta_i)  \\
			&= -\nu_i - \sum_{l\in[N]}  \kappa_{il}  \sin (\theta_l - \theta_i)  \quad \mbox{using the symmetry of $(\kappa_{ij})$} \\
			&=-{\dot \theta}_i.
		\end{aligned}
	\end{align*}
	This yields 
	\[ {\dot \Theta}_N = -\nabla_{\Theta_N} P_N(\Theta_N). \]
	\noindent (iii)~Detailed argument can be found in \cite{H-R}. Thus, we just sketch the main line of idea as follows. First, we show that the phase configuration is uniformly bounded in the sense that there exists a positive constant $\theta^\infty$ such that 
	\[  \sup_{0 \leq t < \infty} \| \Theta_N(t) \|_{\infty} \leq \theta^{\infty}. \]
	Then, motivated by gradient flow approach in \cite{D-X2}, the authors in \cite{H-L-X,H-K-R, H-R} also used the gradient flow formulation (ii) and the analyticity of potential to say that there exists an equilibrium $\Theta_N^{\infty}$ such that \[ \lim_{t \to \infty} \| \Theta_N(t) - \Theta_N^{\infty} \|_{\infty} = 0. \]
\end{proof}
\subsection{Kuramoto model for an infinite ensemble}\label{sec:2.2}
In this subsection, we present several basic properties of the Kuramoto model which concerns the dynamics of countably infinite number of oscillators, in short `infinite Kuramoto model'. 

Note that for the following simple modification:
\[ \sum_{j\in[N]}\kappa_{ij}\sin\left(\theta_j-\theta_{i}\right) \quad \Longrightarrow \quad \sum_{j\in\mathbb{N}}\kappa_{ij}\sin\left(\theta_j-\theta_{i}\right), \]
the infinite sum in the R.H.S. of \eqref{IKM} might not be well-defined, unless we impose some restrictive asymptotic vanishing conditions on the network topology $K=(\kappa_{ij})_{i,j\in\mathbb{N}}$.  Once the infinite sum becomes well-defined, we  can consider the Cauchy problem to the infinite Kuramoto model:
\begin{equation}  \label{B-2}
	\begin{cases} 
		\displaystyle  \dot{\theta}_{i}=\nu_{i}+\sum_{j\in\mathbb{N}}\kappa_{ij}\sin\left(\theta_j-\theta_{i}\right),\quad t>0, \\
		\displaystyle \theta_{i}(0)=\theta_{i}^{\text{in}}, \quad \quad i \in {\mathbb N},
	\end{cases}
\end{equation}
where $\Theta^{\text{in}}, \mathcal{V}$ and $K=(\kappa_{ij})$ satisfy
\begin{equation}  \label{B-3}
	\Theta^{\text{in}}\in\ell^p,\quad \mathcal{V}\in \ell^p,\quad K\in \ell^{p,1},\quad \kappa_{ij}\geq 0,\quad \forall~i,j\in\mathbb{N}
\end{equation}  
for some $p\in [1,\infty]$. Unlike in Section \ref{sec:2.1}, we allow the asymmetric network topology $K$ to consider the most general case.
Then, the following proposition guarantees the well-posedness of \eqref{B-2} -- \eqref{B-3} by using the standard Cauchy-Lipschitz theory.

\begin{proposition}\label{P2.2}
	Suppose that initial configuration, natural frequencies and network topology satisfy \eqref{B-3}. Then, there exists a unique smooth solution $\Theta = \Theta(t) \in {\mathcal C}^1(\mathbb{R}_+; \ell^p)$ to the infinite system
	\eqref{B-2}.
\end{proposition}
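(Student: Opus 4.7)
The plan is to reformulate \eqref{B-2} as a Banach-valued ODE $\dot{\Theta}=F(\Theta)$ on $\ell^p$, where the vector field $F:\ell^p\to\ell^p$ is defined componentwise by
\[
F_i(\Theta):=\nu_i+\sum_{j\in\mathbb{N}}\kappa_{ij}\sin(\theta_j-\theta_i),\quad i\in\mathbb{N},
\]
and then to invoke the abstract Cauchy--Lipschitz (Picard--Lindel\"of) theorem in the Banach space $(\ell^p,\|\cdot\|_p)$. The work therefore reduces to two analytic estimates on $F$.

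First, I would show that $F$ actually maps $\ell^p$ into itself. Using $|\sin|\le 1$ and nonnegativity of $\kappa_{ij}$, we have the pointwise bound $|F_i(\Theta)|\le |\nu_i|+\|(\kappa_{ij})_j\|_1$; taking the $\ell^p$--norm in $i$ and applying Minkowski's inequality yields $\|F(\Theta)\|_p\le \|\mathcal{V}\|_p+\|K\|_{p,1}<\infty$ by assumption \eqref{B-3}. Second, I would establish that $F$ is globally Lipschitz. For $\Theta,\tilde\Theta\in\ell^p$, the $1$-Lipschitz property of sine combined with $|(\theta_j-\theta_i)-(\tilde\theta_j-\tilde\theta_i)|\le |\theta_j-\tilde\theta_j|+|\theta_i-\tilde\theta_i|$ gives
\[
|F_i(\Theta)-F_i(\tilde\Theta)|\le \|(\kappa_{ij})_j\|_1\bigl(\|\Theta-\tilde\Theta\|_\infty+|\theta_i-\tilde\theta_i|\bigr).
\]
Since $\|\cdot\|_\infty\le \|\cdot\|_p$ on sequences for every $p\in[1,\infty]$, the $\ell^p$--norm in $i$ and Minkowski yield $\|F(\Theta)-F(\tilde\Theta)\|_p\le 2\|K\|_{p,1}\|\Theta-\tilde\Theta\|_p$. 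This is precisely the content of the Lipschitz estimate (Lemma \ref{LA-1}) referenced by the authors.

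With $F$ globally Lipschitz on the Banach space $\ell^p$, the abstract Picard--Lindel\"of theorem provides a unique local mild solution via the contraction mapping principle; globality in $t$ is immediate because the Lipschitz constant $2\|K\|_{p,1}$ is independent of $\Theta$, so no finite-time blow-up is possible. The solution lies in $\mathcal{C}^1(\mathbb{R}_+;\ell^p)$ since the equation $\dot\Theta=F(\Theta)$ continuously transports $\ell^p$-regularity in time; smoothness of $F$ (in fact, real-analyticity in each coordinate) then upgrades this by a routine bootstrap to the desired regularity. The main obstacle, and really the only substantive point, is the Lipschitz estimate for $p<\infty$: one must notice that $\sum_j\kappa_{ij}|\theta_j-\tilde\theta_j|$ cannot be bounded directly in an $\ell^p$-compatible way, and must first be controlled by $\ell^\infty$ and then absorbed into $\ell^p$ through the embedding $\ell^p\hookrightarrow \ell^\infty$; this is what forces the hypothesis $K\in\ell^{p,1}$ rather than a weaker summability condition.
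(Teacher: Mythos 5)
Your proposal is correct and follows essentially the same route as the paper: recast \eqref{B-2} as $\dot\Theta=\mathcal{F}(\Theta)$ on $\ell^p$, verify $\|\mathcal{F}(\Theta)\|_p\le\|\mathcal{V}\|_p+\|K\|_{p,1}$ and the global Lipschitz bound $2\|K\|_{p,1}$, and invoke the abstract Cauchy--Lipschitz theorem for global existence and uniqueness. The only difference is cosmetic: for $p<\infty$ you control $\sum_j\kappa_{ij}|\theta_j-\tilde\theta_j|$ by $\|(\kappa_{ij})_j\|_1\|\Theta-\tilde\Theta\|_\infty$ together with $\|\cdot\|_\infty\le\|\cdot\|_p$ and $\|K\|_{\infty,1}\le\|K\|_{p,1}$, whereas the paper uses H\"older with the conjugate exponent $q$ followed by $\|\cdot\|_q\le\|\cdot\|_1$; both yield the same constant.
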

\begin{proof}  
	First of all, we set 
	\[
	f_i(\Theta) := \nu_i+\sum_{j\in\mathbb{N}} \kappa_{ij} \sin(\theta_j- \theta_i),  \quad i \in {\mathbb N}, \quad   {\mathcal F}(\Theta) := (f_1(\Theta),f_2(\Theta), \ldots ).
	\]
	In order to use the standard Cauchy-Lipschitz theory on the Banach space $\ell^{p}$, it suffices to show that for every two solutions $\Theta$ and ${\tilde\Theta}$ to { \eqref{B-2} -- \eqref{B-3}}, we have
	\begin{equation} \label{B-5}
		\| {\mathcal F} \|_{p} \leq \| {\mathcal V} \|_{p} + \| K \|_{p, 1}, \quad  \left\| {\mathcal F}(\Theta)- {\mathcal F}(\widetilde{\Theta})\right\|_p \leq  2 \| K \|_{p, 1}  \|\Theta - {\tilde \Theta} \|_{p}.
	\end{equation}
	\noindent $\bullet$~(Derivation of $\eqref{B-5}_1$):~~For each $i\in\mathbb{N}$, we have
	\begin{equation} \label{B-6}
		|f_i(\Theta)|=\left|\nu_i+\sum_{j\in\mathbb{N}} \kappa_{ij}\sin(\theta_j-\theta_i) \right|\leq |\nu_i|+ \sum_{j\in\mathbb{N}} \kappa_{ij}. 
	\end{equation}
	Then by the Minkowski inequality and \eqref{B-6}, we have 
	\[\| {\mathcal F} \|_{p} \leq \| {\mathcal V} \|_{p} + \| K \|_{p, 1}\quad\mbox{for}~~ 1\leq p\leq\infty.\]
	
	\noindent $\bullet$~(Derivation of $\eqref{B-5}_2$):~For $1\leq p<\infty$, every $\Theta, {\widetilde \Theta} \in \ell^{p}$ satisfy
	\begin{equation}\label{B-7}
		\begin{aligned} 
			\|\mathcal{F}(\Theta)-\mathcal{F}(\widetilde{\Theta})\|_p^p&=\sum_{i\in\mathbb{N}}|f_i(\Theta)-f_i(\widetilde{\Theta})|^p\\
			&=\sum_{i\in\mathbb{N}}\left|\sum_{j\in\mathbb{N}} \kappa_{ij}\left(\sin\left(\theta_{j}-\theta_{i} \right) - \sin ({\tilde \theta}_{j} - {\tilde \theta}_{i})\right)\right|^p \\
			&\le\sum_{i\in\mathbb{N}}\left|\sum_{j\in\mathbb{N}} \kappa_{ij}\left|\left(\theta_{j} -\theta_{i} \right)- ({\tilde \theta}_{j} - {\tilde \theta}_{i})\right|\right|^p\\
			&\le\sum_{i\in\mathbb{N}}\left|\sum_{j\in\mathbb{N}} \kappa_{ij}|\theta_j-{\tilde \theta}_j|+|\theta_i-{\tilde\theta}_i|\sum_{j =1}^\infty \kappa_{ij}\right|^p\\
			&\leq 2^{p-1}\sum_{i\in\mathbb{N}} \left[\Big(\sum_{j\in\mathbb{N}} \kappa_{ij}|\theta_j-{\tilde \theta}_j|\Big)^p+\Big(|\theta_i-{\tilde\theta}_i|\sum_{j\in\mathbb{N}} \kappa_{ij}\Big)^p\right]\\
			&\leq 2^{p-1}\sum_{i\in\mathbb{N}}\left[\|(\kappa_{ij})_j\|_q^p\left(\sum_{k\in \mathbb{N}} |\theta_k-\tilde{\theta}_k|^p\right) + |\theta_i-\tilde{\theta}_i|^p\Big(\sum_{j\in\mathbb{N}} \kappa_{ij}\Big)^p \right],
		\end{aligned}
	\end{equation}
	where we used the H\"{o}lder inequality for $q=\frac{p}{p-1}$ in the last inequality. If we apply the following relations
	\[\|X\|_q\leq \|X\|_1,\quad \sum_{i\in\mathbb{N}} |y_iz_i|\leq \Big(\sum_{i\in\mathbb{N}} |y_i|\Big)\Big(\sum_{i\in\mathbb{N}} |z_i|\Big),\quad \forall~X, Y, Z\in \mathbb{R}^{\mathbb{N}} \]
	to \eqref{B-7}, then we have desired estimate:
	\begin{equation*}
		\begin{aligned}
			\|\mathcal{F}(\Theta)-\mathcal{F}(\widetilde{\Theta})\|_p^p&\leq  2^{p-1}\sum_{i\in\mathbb{N}}\left[\|(\kappa_{ij})_j\|_q^p\left(\sum_{k\in\mathbb{N}} |\theta_k-\tilde{\theta}_k|^p\right)+|\theta_i-\tilde{\theta}_i|^p\Big(\sum_{j\in\mathbb{N} } \kappa_{ij}\Big)^p \right]\\
			&\leq 2^{p-1} \left[\|K\|_{p,1}^p\|\Theta-\widetilde{\Theta}\|_p^p+\|\Theta-\widetilde{\Theta}\|_p^p\|K\|_{p,1}^p \right]\\
			&=2^p\|K\|_{p,1}^p\|\Theta-\widetilde{\Theta}\|_p^p.
		\end{aligned}
	\end{equation*}
	In addition, we can also obtain desired estimate for $p=\infty$:
	\begin{align*}
		\left\| {\mathcal F}(\Theta)- {\mathcal F}(\widetilde{\Theta})\right\|_\infty 
		&=\sup_{i\in\mathbb{N}}|f_i(\Theta)-f_i(\widetilde{\Theta})|\\
		&=\sup_{i\in\mathbb{N}}\left|\sum_{j\in\mathbb{N}} \kappa_{ij}\left(\sin\left(\theta_{j}-\theta_{i} \right) - \sin ({\tilde \theta}_{j} - {\tilde \theta}_{i})\right)\right|\\
		&\le\sup_{i\in\mathbb{N}} \sum_{j\in\mathbb{N}} \kappa_{ij}\left|\left(\theta_{j} -\theta_{i} \right)- ({\tilde \theta}_{j} - {\tilde \theta}_{i})\right|\\
		& \le\sup_{i\in\mathbb{N}}\sum_{j\in\mathbb{N}} \kappa_{ij}\left( |\theta_{j} - {\tilde \theta}_{j} |+  |\theta_{i} - {\tilde \theta}_{i}  |\right) \\
		& \le 2 \Big( \sup_{i\in\mathbb{N}} \sum_{j\in\mathbb{N}} \kappa_{ij} \Big)  \| \Theta - {\widetilde \Theta} \|_\infty  \\
		&= 2 \| K \|_{\infty, 1}  \|\Theta - {\tilde \Theta} \|_{\infty}.
	\end{align*}  
	
	Now, once we have \eqref{B-5}, the solution to \eqref{B-2} -- \eqref{B-3} exists uniquely in some nonempty finite time interval $[0,T]$, and the solution never blows up in finite time due to the boundedness of the image of ${\mathcal F}$ so that the local solution can be extended to the global solution $\Theta:[0,\infty)\to \ell^p.$
\end{proof}

In the following lemma, we can see the analogous properties of our infinite model with the finite Kuramoto model. Lemma \ref{L2.1} (1) gives an invariant of our model, and (2) gives the translation-invariant property of the Kuramoto model. Then in Lemma \ref{L2.2}, we discuss two basic sets of estimates to be used in Section \ref{sec:3.1}, \ref{sec:4} and \ref{sec:5.2}, then we establish Lipschitz continuity of some functionals.
\begin{lemma}\label{L2.1}
	Let $p,q\in[1,\infty]$ with $\frac{1}{p}+\frac{1}{q}=1$, and let $\Theta$ be a global $\ell^p$-solution to \eqref{B-2} -- \eqref{B-3}. Then, the following assertions hold.
	\begin{enumerate}
		\item	
		If the network topology $K=(\kappa_{ij})$ is given by 		
		\begin{equation}\label{B-8}
			\kappa_{ij}=a_{ij}\kappa_j,\quad \forall~i,j\in\mathbb{N},
		\end{equation}
		for some symmetric $A=(a_{ij})\in \ell^{p,p}$ and $(\kappa_1,\kappa_2,\ldots)\in\ell^{q}$,	we have 
		\[\frac{d}{dt}\left(\sum_{i\in\mathbb{N}}  \kappa_i \theta_i \right)=\sum_{i\in\mathbb{N}} \kappa_i\nu_i. \]
		\item
		If we set 
		\begin{equation*}
			{\hat \theta}_i(t) = \theta_i(t)-\nu t, \quad i \in {\mathbb N},\quad t \geq 0,
		\end{equation*}
		then ${\hat \Theta} := ({\hat \theta}_1,{\hat \theta}_2, \ldots)$ satisfies 
		\begin{equation*}
			\begin{cases}
				\displaystyle\dot{\hat{\theta}}_{i}=\nu_i-\nu+\sum_{j\in\mathbb{N}}\kappa_{ij}\sin (\hat{\theta}_{j}-\hat{\theta}_{i}), \quad t>0,\\
				\displaystyle\hat{\theta}_i(0)=\theta_i^{\text{in}}\in \mathbb{R},\quad \forall~ i\in \mathbb{N}.
			\end{cases}
		\end{equation*}
	\end{enumerate}		
\end{lemma}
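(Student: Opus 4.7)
The plan is to treat the two parts separately: part (2) is a routine change of variable, while part (1) is the content of the lemma and rests on (a) justifying termwise differentiation of an infinite series, (b) applying Fubini to swap the double sum, and (c) using the symmetry of $A$ to pair the summands and exploit the oddness of $\sin$.

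For part (1), I would first establish absolute convergence of the relevant double series so that termwise differentiation and Fubini are both legal. Writing $\kappa_{ij}=a_{ij}\kappa_j$ and using $|\dot\theta_i|\leq|\nu_i|+\sum_j\kappa_{ij}$ together with H\"older with exponents $p$ and $q=p/(p-1)$,
\begin{equation*}
\sum_{i\in\mathbb{N}}\kappa_i|\dot\theta_i|\leq \sum_{i\in\mathbb{N}}\kappa_i|\nu_i|+\sum_{i,j\in\mathbb{N}}\kappa_i a_{ij}\kappa_j\leq \|(\kappa_i)\|_q\|\mathcal{V}\|_p+\|(\kappa_i)\|_q^2\,\|A\|_{p,p},
\end{equation*}
where in the last step I apply H\"older twice, first summing in $j$ to get $\sum_j a_{ij}\kappa_j\leq \|(a_{ij})_j\|_p\|(\kappa_j)\|_q$, then summing in $i$ against $\kappa_i$ to get $\|(\kappa_i)\|_q\|A\|_{p,p}$. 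The corresponding one-sided estimates at $p=\infty$, $q=1$ use $\|A\|_{\infty,\infty}$ in place of $\|A\|_{p,p}$. This bound is uniform in time, so by dominated convergence I may differentiate the series $\sum_i\kappa_i\theta_i(t)$ termwise.

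Substituting \eqref{B-2} then gives
\begin{equation*}
\frac{d}{dt}\sum_{i\in\mathbb{N}}\kappa_i\theta_i=\sum_{i\in\mathbb{N}}\kappa_i\nu_i+\sum_{i,j\in\mathbb{N}}\kappa_i a_{ij}\kappa_j\sin(\theta_j-\theta_i),
\end{equation*}
where the Fubini rearrangement of the double sum is justified by the absolute bound above together with $|\sin|\leq 1$. Swapping the dummy indices $(i,j)\leftrightarrow(j,i)$ in the double sum, using $a_{ij}=a_{ji}$ and $\sin(\theta_i-\theta_j)=-\sin(\theta_j-\theta_i)$, shows that the double sum equals its own negative, hence vanishes. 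This yields the claim of (1).

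For part (2), I would just differentiate $\hat\theta_i(t)=\theta_i(t)-\nu t$ to get $\dot{\hat\theta}_i=\dot\theta_i-\nu$, substitute the right-hand side of \eqref{B-2}, and note that the pairwise differences are invariant under a common additive shift, $\theta_j-\theta_i=\hat\theta_j-\hat\theta_i$, so the coupling sum keeps its form while the inhomogeneous term becomes $\nu_i-\nu$; the initial condition is immediate from $\hat\theta_i(0)=\theta_i^{\text{in}}$. The main (only) obstacle in the whole lemma is the absolute convergence estimate that powers Fubini and termwise differentiation in part (1); once this is in place, the cancellation via symmetry is automatic and the rest is bookkeeping.
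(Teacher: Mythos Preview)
Your proposal is correct and follows essentially the same route as the paper: multiply the $i$-th equation by $\kappa_i$, sum over $i$, and cancel the double sum via the index swap $i\leftrightarrow j$ using $a_{ij}=a_{ji}$ and the oddness of $\sin$, while part (2) is dismissed as a routine substitution. The only difference is that you are more careful than the paper in justifying the termwise differentiation and the Fubini rearrangement via the H\"older bound; the paper simply performs these steps without comment.
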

\begin{proof} (1)~First, we multiply $\kappa_i$  to $\eqref{B-2}_1$ to obtain
	\begin{equation} \label{B-9}
		\frac{d}{dt} \Big( \kappa_i \theta_{i} \Big) = \kappa_i \nu_{i}+\sum_{j\in\mathbb{N}} \kappa_i \kappa_{ij}\sin\left(\theta_{j}-\theta_{i}\right).
	\end{equation}
	Then, we take a summation of \eqref{B-9} over all $i$ and use the exchange symmetry $i \longleftrightarrow j$  to get 
	{
		\begin{equation*}
			\frac{d}{dt} \sum_{i\in\mathbb{N}} \kappa_i \theta_{i} = \sum_{i\in\mathbb{N}} \left(\kappa_i \nu_{i}+\sum_{j\in\mathbb{N}} \kappa_i \kappa_{ij}\sin\left(\theta_{j}-\theta_{i}\right) \right)
			= \sum_{i\in\mathbb{N}} \left( \kappa_i \nu_{i} - \sum_{j\in\mathbb{N}} \kappa_j \kappa_{ji}\sin\left(\theta_{j}-\theta_{i}\right)\right).
		\end{equation*}
	}
	Therefore, we employ \eqref{B-8} to get the desired balanced law. \newline
	
	\noindent (2)~Since the second assertion is obvious, we omit its proof. 
\end{proof}

\begin{remark}\label{R2.1} (1)~If we set $p=1$ and $\kappa_{j}\equiv 1$, then the network topology $K$ satisfying \eqref{B-8} is a symmetric summable infinite matrix (see Section \ref{sec:3}):
	\[K=(\kappa_{ij})\in\ell^{1,1},\quad \kappa_{ij}=\kappa_{ji},\quad i,j\in\mathbb{N}. \] 
	(2)~If we set $p=\infty$ and $a_{ij}\equiv 1$, then the network topology $K$ satisfying \eqref{B-8} is a sender network (see Section \ref{sec:5}):
	\[(\kappa_1,\kappa_2,\ldots)\in\ell^1,\quad \kappa_{ij}=\kappa_j,\quad i,j\in\mathbb{N}. \]
\end{remark}

\begin{lemma}\label{L2.2}
	Let $\Theta = \Theta(t)$ be a global $\ell^\infty$-solution to \eqref{B-2} -- \eqref{B-3}. Then, the following assertions hold.
	\begin{enumerate}
		\item
		$\dot{\Theta}$ and $\ddot{\Theta}$ are uniformly bounded: for every $i\in\mathbb{N}$,
		\begin{equation} \label{B-10}
			\begin{aligned}
				&\sup_{0 \leq t  < \infty}  |\dot{\theta}_i(t)|  \leq \|\mathcal{V}\|_\infty+ \| K \|_{\infty,1} \leq \|\mathcal{V}\|_p+ \| K \|_{p,1},\\
				&\sup_{0 \leq t < \infty} |\ddot{\theta}_i(t)|  \leq 2 \| K \|_{\infty, 1} (\|\mathcal{V}\|_\infty +  \| K \|_{\infty, 1})\leq 2 \| K \|_{p, 1} (\|\mathcal{V}\|_p +  \| K \|_{p, 1}).
			\end{aligned}
		\end{equation}
		\item 
		Extremals and phase-diameter functionals 
		\[ {\mathcal D}(\Theta) := \sup_{i,j\in \mathbb{N}} |\theta_i - \theta_j|, \quad  \sup_{i\in \mathbb{N}}\theta_i \quad \mbox{and} \quad  \inf_{i \in \mathbb{N} }\theta_i \]
		are Lipschitz continuous in time $t$. 
	\end{enumerate}
\end{lemma}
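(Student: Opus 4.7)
The proof splits along the two assertions. For part (1), my plan is to obtain the bound on $|\dot\theta_i|$ directly from the governing equation $\eqref{B-2}_1$: since $|\sin(\theta_j-\theta_i)|\le 1$, we get
\[
|\dot\theta_i(t)| \;\le\; |\nu_i| + \sum_{j\in\mathbb{N}} \kappa_{ij} \;\le\; \|\mathcal{V}\|_\infty + \|K\|_{\infty,1}.
\]
The second inequality in $\eqref{B-10}_1$ follows from the elementary $\ell^p$-monotonicity estimates $\|\mathcal{V}\|_\infty \le \|\mathcal{V}\|_p$ and $\|K\|_{\infty,1}\le \|K\|_{p,1}$, valid for every $p\in[1,\infty]$. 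For the bound on $|\ddot\theta_i|$, I would differentiate $\eqref{B-2}_1$ in time to obtain
\[
\ddot\theta_i = \sum_{j\in\mathbb{N}} \kappa_{ij}\cos(\theta_j-\theta_i)\bigl(\dot\theta_j - \dot\theta_i\bigr),
\]
so $|\ddot\theta_i|\le 2\bigl(\sup_k|\dot\theta_k|\bigr)\sum_{j}\kappa_{ij}$, and plugging in $\eqref{B-10}_1$ together with the definition of $\|K\|_{\infty,1}$ yields $\eqref{B-10}_2$. One small technicality is justifying term-by-term differentiation of the infinite sum; this follows from Proposition \ref{P2.2} (which gives $\Theta\in\mathcal{C}^1(\mathbb{R}_+;\ell^p)$), the uniform summability $\|K\|_{\infty,1}<\infty$, and the uniform bound on $(\dot\theta_k)$ just established.

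For part (2), my strategy is to transfer the uniform Lipschitz bound on each coordinate $\theta_i$, with common Lipschitz constant $L:=\|\mathcal{V}\|_\infty+\|K\|_{\infty,1}$ from part (1), to bounds on the envelope functionals. Concretely, for $0\le s<t$ and any $i\in\mathbb{N}$,
\[
\theta_i(t) \;\le\; \theta_i(s) + L(t-s) \;\le\; \sup_{k\in\mathbb{N}} \theta_k(s) + L(t-s),
\]
so taking the supremum over $i$ on the left and then swapping the roles of $s$ and $t$ gives $|\sup_i\theta_i(t)-\sup_i\theta_i(s)|\le L|t-s|$; the argument for $\inf_i\theta_i$ is identical. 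Since $\mathcal{D}(\Theta)=\sup_i\theta_i-\inf_i\theta_i$ is a difference of two Lipschitz functions, it inherits Lipschitz continuity with constant at most $2L$.

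I do not anticipate any serious obstacle: every step reduces to the coordinate-wise derivative bound from part (1) and standard manipulations of suprema and infima. The only point worth spelling out carefully is the termwise differentiation in the $\ddot\theta_i$ estimate, which is legitimate thanks to the uniform summability of $(\kappa_{ij})_j$ and the uniform bound on $(\dot\theta_j)$ already in hand.
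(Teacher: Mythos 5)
Your proposal is correct and follows essentially the same route as the paper: the coordinate bounds come directly from the equation and from differentiating it in time (with the same $\ell^p$-monotonicity inequalities), and the Lipschitz continuity of $\sup_i\theta_i$, $\inf_i\theta_i$ and $\mathcal{D}(\Theta)$ is obtained by exactly the same envelope argument. Your remark on justifying termwise differentiation of the infinite sum is a small extra care the paper leaves implicit, but it does not change the argument.
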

\begin{proof}
	\noindent	(1)  The first estimate follows from \eqref{B-6}. Now, we differentiate $\eqref{B-2}_1$ with respect to $t$ and use $\eqref{B-10}_1$ to obtain 
	\begin{align}
		\begin{aligned} \label{B-11}
			\left|\ddot{\theta}_{i}\right|	&=\left|\sum_{j\in\mathbb{N}} \kappa_{ij} (\dot{\theta}_{i}-\dot{\theta}_{j} )\cos\left(\theta_{i}-\theta_{j}\right)\right|\\
			& \le2\left(\left\Vert \mathcal{V}\right\Vert _{\infty}+   \| K \|_{\infty, 1}  \right)\sum_{j\in\mathbb{N}} \kappa_{ij} \\
			&\le 2  \| K \|_{\infty, 1} \left(\left\Vert \mathcal{V}\right\Vert _{\infty}+  \| K \|_{\infty, 1} \right)\\
			&\leq 2 \| K \|_{p, 1} (\|\mathcal{V}\|_p +  \| K \|_{p, 1}).
		\end{aligned}
	\end{align}
	
	\noindent (2)~We first consider the Lipschitz continuity of \[ t\mapsto \sup_{i\in \mathbb{N}}  \theta_i(t).\] For every $s < t$,  we use  Lemma \ref{L2.2} (1) to get 
	\[ \theta_i(t) \leq \theta_i(s) + (\| {\mathcal V} \|_\infty + \| K \|_{\infty, 1}) (t-s) \leq \sup_{i\in \mathbb{N}}  \theta_i(s) + (\| {\mathcal V} \|_\infty + \| K \|_{\infty, 1}) (t-s).  \]
	Then, we take the supremum of the L.H.S. of the above relation to obtain
	\begin{equation} \label{B-11-1}
		\sup_{i\in \mathbb{N}}  \theta_i(t) \leq \sup_{i\in \mathbb{N}}  \theta_i(s) + (\| {\mathcal V} \|_\infty + \| K \|_{\infty, 1}) (t-s),
	\end{equation}
	and a similar argument also yields
	\begin{equation} \label{B-11-2}
		\sup_{i\in \mathbb{N}}  \theta_i(t) \geq \sup_{i\in \mathbb{N}}  \theta_i(s) - (\| {\mathcal V} \|_\infty + \| K \|_{\infty, 1}) (t-s).
	\end{equation}
	Therefore, we combine \eqref{B-11-1} and \eqref{B-11-2} to obtain
	\[ \Big |  \sup_{i\in \mathbb{N}}  \theta_i(t) - \sup_{i\in \mathbb{N}}  \theta_i(s) \Big| \leq  (\| {\mathcal V} \|_\infty + \| K \|_{\infty, 1}) |t-s|,\quad \forall~t,s\geq 0. \]
	In addtion, the Lipschitz continuity of 
	\[t\mapsto \inf_{i\in \mathbb{N}} \theta_i(t)\] 
	can be also shown in a similar manner. Finally, the phase-diameter $\mathcal{D}(\Theta)$, which can be given by the difference between these two extremals, is also Lipschitz.
\end{proof}
\begin{remark} \label{R2.2}
	Note that the relation \eqref{B-11} yields
	\begin{equation} \label{B-12}
		\sup_{0 \leq t < \infty} \left|\ddot{\theta}_{i}(t) \right| \leq 2\left(\left\Vert \mathcal{V}\right\Vert _{\infty}+   \| K \|_{\infty, 1}  \right)\sum_{j\in\mathbb{N}}\kappa_{ij},\quad\forall~ i\in \mathbb{N}.
	\end{equation}
\end{remark}

\begin{lemma}\label{L2.3}
	Let $\Theta = \Theta(t)$ be a global solution to \eqref{B-2} -- \eqref{B-3}. Then  for every $i, j \in {\mathbb N}$, we have
	\[ \left|\frac{d}{dt}\left({\theta}_{i}-{\theta}_{j}\right)\right|\leq \mathcal{D}(\mathcal{V})+2 \| K \|_{\infty, 1},\quad\left|\frac{d^{2}}{dt^{2}}\left({\theta}_{i}-{\theta}_{j}\right)\right|\leq2  \| K \|_{\infty, 1} \left({\mathcal D}\left({\mathcal V}\right)+2 \| K \|_{\infty, 1} \right). \]
\end{lemma}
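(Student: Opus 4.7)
The plan is to derive both bounds by direct estimation from the governing equation \eqref{B-2}, using the elementary inequalities $|\sin x| \le 1$ and $|\cos x| \le 1$ together with the assumption $\|K\|_{\infty,1} < \infty$.

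For the first derivative bound, I would start by subtracting the ODEs for indices $i$ and $j$ to get
\[
\frac{d}{dt}(\theta_i - \theta_j) = (\nu_i - \nu_j) + \sum_{k \in \mathbb{N}} \bigl[\kappa_{ik}\sin(\theta_k - \theta_i) - \kappa_{jk}\sin(\theta_k - \theta_j)\bigr].
\]
Taking absolute values, using $|\nu_i - \nu_j| \le \mathcal{D}(\mathcal{V})$, bounding each sine by $1$ in absolute value, and invoking $\sum_k \kappa_{ik}, \sum_k \kappa_{jk} \le \|K\|_{\infty,1}$ yields the stated bound $\mathcal{D}(\mathcal{V}) + 2\|K\|_{\infty,1}$.

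For the second derivative bound, I would differentiate the above identity once more in time. Since $\nu_i - \nu_j$ is constant, differentiating the sine terms produces
\[
\frac{d^2}{dt^2}(\theta_i - \theta_j) = \sum_{k \in \mathbb{N}} \kappa_{ik}(\dot{\theta}_k - \dot{\theta}_i)\cos(\theta_k - \theta_i) - \sum_{k \in \mathbb{N}} \kappa_{jk}(\dot{\theta}_k - \dot{\theta}_j)\cos(\theta_k - \theta_j).
\]
Bounding each cosine by $1$ and each factor $|\dot{\theta}_k - \dot{\theta}_i|, |\dot{\theta}_k - \dot{\theta}_j|$ by the first derivative estimate just established (applied with indices $k,i$ and $k,j$), and then summing $\kappa_{ik}$ and $\kappa_{jk}$ against $\|K\|_{\infty,1}$, yields
\[
\left|\frac{d^2}{dt^2}(\theta_i - \theta_j)\right| \le 2\|K\|_{\infty,1}\bigl(\mathcal{D}(\mathcal{V}) + 2\|K\|_{\infty,1}\bigr).
\]

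There is no real obstacle here; the lemma is a mechanical consequence of \eqref{B-2} together with the summability of $K$, and essentially refines the per-oscillator estimates \eqref{B-10} of Lemma \ref{L2.2} to pairwise differences. The only subtlety is making sure to use the pairwise first-derivative bound (rather than $2\|\dot{\Theta}\|_\infty$, which would give a worse constant involving $\|\mathcal{V}\|_\infty$ instead of $\mathcal{D}(\mathcal{V})$) when estimating the factors $\dot{\theta}_k - \dot{\theta}_i$ and $\dot{\theta}_k - \dot{\theta}_j$ appearing in the second derivative; this is what produces the sharper constant $\mathcal{D}(\mathcal{V})$ in place of $\|\mathcal{V}\|_\infty$.
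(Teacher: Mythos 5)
Your proof is correct and follows essentially the same route as the paper: subtract the two equations, bound sines by one for the first estimate, then differentiate and bound each pairwise velocity difference $\dot\theta_i-\dot\theta_k$, $\dot\theta_k-\dot\theta_j$ by that first estimate to get the second. Your remark about using the pairwise bound (rather than $2\|\dot\Theta\|_\infty$) to obtain the constant ${\mathcal D}({\mathcal V})$ is exactly the point of the paper's argument as well.
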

\vspace{0.1cm}
\begin{proof} 
	For every $i,j\in\mathbb{N}$, the first and the second derivatives of $\theta_i-\theta_j$ are given by
	\begin{equation} \label{B-13}
		\begin{aligned}
			\frac{d}{dt}\left({\theta}_{i}-{\theta}_{j}\right) &=\nu_{i}-\nu_{j}-\sum_{k\in\mathbb{N}} \left[\kappa_{ik}\sin(\theta_{i}-\theta_{k})+\kappa_{jk}\sin(\theta_{k}-\theta_{j})\right],\\
			\frac{d^{2}}{dt^{2}}\left({\theta}_{i}-{\theta}_{j}\right) &=-\sum_{k\in\mathbb{N}} \left[\kappa_{ik}\cos(\theta_{i}-\theta_{k})\frac{d}{dt}\left({\theta}_{i}-{\theta}_{k}\right)+\kappa_{jk}\cos(\theta_{k}-\theta_{j})\frac{d}{dt}\left({\theta}_{k}-{\theta}_{j}\right)\right].
		\end{aligned}
	\end{equation}
	Then, we have the boundedness of $\frac{d}{dt}\left({\theta}_{i}-{\theta}_{j}\right)$ from the following inequalities:
	\begin{equation}\label{B-14}
		\Big| \frac{d}{dt}\left({\theta}_{i}-{\theta}_{j}\right) \Big| \leq {\mathcal D}({\mathcal V}) + \sum_{k\in\mathbb{N}} (\kappa_{ik} + \kappa_{jk})  \leq {\mathcal D}({\mathcal V})  + 2 \| K \|_{\infty,1}.
	\end{equation}
	Finally, we combine \eqref{B-13} and \eqref{B-14} to obtain the boundedness of $\frac{d^2}{dt^2}\left({\theta}_{i}-{\theta}_{j}\right)$:
	\[
	\left|\frac{d^2}{dt^2}\left({\theta}_{i}-{\theta}_{j}\right)\right| 
	\leq  \sum_{k\in\mathbb{N}} \Big[ \kappa_{ik} \Big| \frac{d}{dt}\left({\theta}_{i}-{\theta}_{k}\right) \Big| + \kappa_{jk} \Big| \frac{d}{dt}\left({\theta}_{k}-{\theta}_{j}\right) \Big| \Big]  \leq 2 \| K \|_{\infty, 1} \left(\mathcal{D}\left({\mathcal V}\right)+2 \|K\|_{\infty,1}\right).
	\]
\end{proof}

\section{Emergent dynamics of a homogeneous ensemble} \label{sec:3}
\setcounter{equation}{0}
In this section, we present the emergent dynamics of the infinite Kuramoto model with a homogeneous ensemble consisting of oscillators with identical natural frequencies:
\[
\quad\nu_{i}\equiv \nu,\quad \forall~i\in \mathbb{N}.
\]
From Lemma \ref{L2.1} (2), we may assume that $\nu_i \equiv 0$ without loss of generality. In other words, we consider the phase configuration $\Theta$ satisfying
\begin{equation} 
	\begin{cases} \label{C-1}
		\displaystyle\dot{\theta}_{i}=\sum_{j\in\mathbb{N}} \kappa_{ij}\sin\left(\theta_{j}-\theta_{i}\right), \quad t>0,\\
		\displaystyle\theta_i(0)=\theta_i^{\text{in}}\in \mathbb{R},\quad \forall~ i\in \mathbb{N},\\
		\Theta^{\text{in}}=(\theta_1^{\text{in}},\theta_2^{\text{in}},\ldots)\in\ell^p,\quad K=(\kappa_{ij})\in\ell^{p,1},\quad p\in[1,\infty].
	\end{cases}
\end{equation}
Since $\ell^1\subset \ell^2\subset\cdots\subset \ell^\infty$ and $\ell^{1,1}\subset \ell^{2,1}\subset\cdots\subset \ell^{\infty,1}$, all results for $\ell^p$-solution $\Theta$ can also be applied to other $\ell^q$-solutions with $q<p$. In the sequel, we will study the dynamics of $\ell^\infty$-solution and $\ell^p$-solution ($p<\infty$) to \eqref{C-1}  and provide some results corresponding to each part of Proposition \ref{P2.1}.
\medskip
\subsection{$\ell^\infty$-solution: complete synchronization} \label{sec:3.1}
In this subsection, we will study the complete synchronization of the homoegeneous ensemble with $\ell^\infty$ initial data. As aforementioned, all results in this subsection can be applied to other $\ell^p$-solutions.
\subsubsection{Dynamics of phase diameter}
At a heuristic level, it is natural to expect that ${\mathcal D}(\Theta(t))$ is `{\it non-increasing}' in $t$ whenever ${\mathcal D}(\Theta(t))<\pi$, since the oscillators near the extremal phases 

\[ {\overline \theta}(t) :=\sup_{i\in \mathbb{N}}\theta_i(t),\quad \underline{\theta}(t) :=\inf_{i\in\mathbb{N}}\theta_i(t), \]
are pulled inward the region in which the majority of the group is located. 
In fact, for the finite Kuramoto ensemble, it is easy to check that ${\overline \theta}$ and ${\underline \theta}$ are nonincreasing and nondecreasing, respectively, and their difference converges to zero exponentially, so that Proposition \ref{P2.1} (3) holds. For the infinite Kuramoto ensemble, however, a such argument has to be refined. The following lemma shows that such a heuristic argument holds, when the interaction network $K = (\kappa_{ij})$ satisfies some structural condition uniformly in $i$.\\

Throughout the paper, we refer to the following frameworks to guarantee the synchronization behavior of the infinite Kuramoto model: \newline
\begin{itemize}
	\item
	(${\mathcal F}1$): The initial phase-diameter is smaller than $\pi$: the initial phase configuration $\Theta^{\text {in}}$ satisfies
	\[\mathcal{D}(\Theta^{\text{in}})<\pi. \]
	
	\vspace{0.2cm}
	
	\item
	(${\mathcal F}2$): There exists a sequence $\boldsymbol{\tilde\kappa} := \{\tilde\kappa_j\}_{j\in \mathbb{N}}\in \ell^1$ such that 
	\[\frac{\kappa_{ij}}{\sum_{k\in\mathbb{N}}\kappa_{ik}}>\tilde\kappa_j>0,\]
	for all $i,j\in \mathbb{N}.$
\end{itemize}

\begin{lemma} \label{L3.1}
	Suppose that network topology $K=(\kappa_{ij})$ and initial data $\Theta^{\text{in}}$ satisfy $({\mathcal F}1)-({\mathcal F}2)$, and let $\Theta$ be a solution to \eqref{C-1} with $p=\infty$. If initial phase diameter $\mathcal{D}(\Theta^{\text{in}})$ is nonzero, there exist two positive constants $\delta$ and $\varepsilon$ such that 
	\begin{itemize}
		\item
		For every index $i \in {\mathbb N}$ satisfying  $ \theta_i^{\text{in}}\leq  {\overline \theta}(0) - \varepsilon$,  one has 
		\[   \theta_i(t)< {\overline \theta}(0),\quad \forall~t\in(0, \delta).  \]
		\item
		For every index $i \in {\mathbb N}$  satisfying $\theta_i^{\text{in}}>   {\overline \theta}(0)-\varepsilon$,  one has 
		\[  \dot{\theta}_i(t) < 0,\quad \forall~t\in(0,\delta). \]
	\end{itemize}
\end{lemma}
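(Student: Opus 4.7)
The plan is to treat both assertions as short-time perturbations from $t=0$, leveraging the uniform bounds $|\dot\theta_i(t)|\leq \|K\|_{\infty,1}$ and $|\ddot\theta_i(t)|\leq 2\|K\|_{\infty,1}\,s_i$ (with $s_i:=\sum_{k\in\mathbb{N}}\kappa_{ik}$) from Lemma~\ref{L2.2} and Remark~\ref{R2.2} specialized to $\mathcal{V}\equiv 0$. The first assertion is essentially immediate: $\theta_i^{\text{in}}\leq \overline{\theta}(0)-\varepsilon$ gives $\theta_i(t)\leq \overline{\theta}(0)-\varepsilon+t\|K\|_{\infty,1}<\overline{\theta}(0)$ provided $\delta<\varepsilon/\|K\|_{\infty,1}$, which yields one constraint on the eventual choice of $\delta$.

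The substance lies in the second assertion, where the goal is to produce a strict negative bound $\dot\theta_i(0)\leq -\alpha s_i$ with $\alpha>0$ independent of $i$. I will write
\[
\dot\theta_i(0)=\sum_{j\in\mathbb{N}} \kappa_{ij}\sin(\phi_j+\psi_i),\qquad \phi_j:=\theta_j^{\text{in}}-\overline{\theta}(0)\in[-\mathcal{D}(\Theta^{\text{in}}),0],\quad \psi_i:=\overline{\theta}(0)-\theta_i^{\text{in}}\in[0,\varepsilon),
\]
fix a threshold $\eta\in(0,\mathcal{D}(\Theta^{\text{in}}))$ (e.g.\ $\eta=\mathcal{D}(\Theta^{\text{in}})/2$), and split the sum at $S_\eta:=\{j:\phi_j\leq -\eta\}$. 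Since $\inf_j\phi_j=-\mathcal{D}(\Theta^{\text{in}})<-\eta$, the set $S_\eta$ is non-empty, so $(\mathcal{F}2)$ gives $\tau:=\sum_{j\in S_\eta}\tilde\kappa_j>0$. For $\varepsilon<\eta$ and $j\in S_\eta$, the argument $\phi_j+\psi_i$ lies in $[-\mathcal{D}(\Theta^{\text{in}}),-(\eta-\varepsilon)]\subset(-\pi,0)$ by $(\mathcal{F}1)$, hence $\sin(\phi_j+\psi_i)\leq -c_0$ with $c_0:=\min(\sin(\eta-\varepsilon),\sin\mathcal{D}(\Theta^{\text{in}}))>0$; combined with the pointwise bound $\kappa_{ij}\geq \tilde\kappa_j s_i$ from $(\mathcal{F}2)$, the $S_\eta$-contribution is at most $-c_0\tau s_i$. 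For $j\notin S_\eta$, the crude estimate $\sin(\phi_j+\psi_i)\leq \varepsilon$ bounds that contribution by $\varepsilon(1-\tau)s_i$. Summing yields
\[
\dot\theta_i(0)\leq s_i\bigl[-c_0\tau+\varepsilon(1-\tau)\bigr]=-\alpha s_i,\qquad \alpha:=c_0\tau-\varepsilon(1-\tau),
\]
and $\alpha>0$ as soon as $\varepsilon$ is chosen small (and below $\eta$), with $\alpha$ independent of $i$.

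To propagate strict negativity from $t=0$ to a short interval, I will use $|\ddot\theta_i(t)|\leq 2\|K\|_{\infty,1} s_i$ to conclude $\dot\theta_i(t)\leq s_i[-\alpha+2\|K\|_{\infty,1}t]$, which stays negative for $t<\alpha/(2\|K\|_{\infty,1})$. Setting $\delta:=\min\bigl(\alpha/(2\|K\|_{\infty,1}),\varepsilon/\|K\|_{\infty,1}\bigr)$ then dispatches both assertions simultaneously.

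The main obstacle, relative to the finite-$N$ case, is obtaining this uniformity in $i$ over an infinite index set: $(\mathcal{F}1)$ keeps $\sin$ uniformly bounded away from zero on the interval in question, while $(\mathcal{F}2)$ secures a uniform-in-$i$ lower bound on the fractional coupling weight directed to indices at least $\eta$ below $\overline{\theta}(0)$. The common factor $s_i$ appearing in both the $\dot\theta_i(0)$-bound and the $\ddot\theta_i$-bound is precisely what cancels to produce a $\delta$ independent of $i$, and I expect verifying this scale-matching to be the most delicate bookkeeping step.
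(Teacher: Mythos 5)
Your proposal is correct, and its core mechanism is the same as the paper's (Appendix~\ref{App-B}): use $(\mathcal{F}2)$ to extract a uniform-in-$i$ fraction of coupling weight directed at oscillators sitting well below $\overline{\theta}(0)$, obtain $\dot\theta_i(0)\leq -\alpha\sum_k\kappa_{ik}$, and propagate over a short interval via the second-derivative bound of Remark~\ref{R2.2}, the shared factor $s_i=\sum_k\kappa_{ik}$ cancelling to give an $i$-independent $\delta$; the first bullet is handled identically. Where you genuinely diverge is the decomposition: the paper distinguishes whether $\overline{\theta}(0)$ is an isolated point or a limit point of $\{\theta_i^{\text{in}}\}$, choosing $\varepsilon$ from the spectral gap in the first case and from the implicit equation $\bigl(\sum_{j}\tilde\kappa_j\bigr)\sin(\varepsilon_0-\varepsilon)=2\sin\varepsilon$ (with a three-band split of the index set) in the second, whereas you fix a single threshold $\eta=\mathcal{D}(\Theta^{\text{in}})/2$, note $S_\eta\neq\emptyset$ directly from the definition of the diameter, and absorb the entire near-top band with the crude bound $\sin(\phi_j+\psi_i)\leq\varepsilon$. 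This eliminates the case analysis and the implicit choice of $\varepsilon$ at the cost of a slightly lossier constant; your bookkeeping is consistent (note only that $c_0=\min(\sin(\eta-\varepsilon),\sin\mathcal{D}(\Theta^{\text{in}}))$ depends on $\varepsilon$, but since $\eta$, $S_\eta$ and $\tau$ are fixed before $\varepsilon$ is chosen, taking e.g.\ $\varepsilon\leq\eta/2$ and then $\varepsilon(1-\tau)<c_0\tau$ avoids any circularity, and $(\mathcal{F}2)$ guarantees $s_i>0$ so strict negativity of $\dot\theta_i(t)$ indeed follows).
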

\begin{proof} Since the proof is very lengthy and technical, we leave its proof in Appendix \ref{App-B}. 
\end{proof}
\vspace{0.3cm}
Note that the natural frequency $\mathcal{V}$ and initial phase diameter $\mathcal{D}(\Theta_N^{\text{in}})$ satisfy the same condition in Proposition \ref{P2.1} (3), and only the positivity condition $\kappa_{ij}>0$ has been modified to $(\mathcal{F}2)$. 
\vspace{0.3cm}
\begin{remark} \label{R3.1}
	Below, we provide several remarks on the framework $(\mathcal{F}1)-(\mathcal{F}2)$.
	\begin{enumerate}
		\item
		An interaction network $(\kappa_{ij})$ satisfying $(\mathcal{F}2)$ can be easily constructed from a sequence in $\ell^1$ whose components are all positive real numbers. More precisely, for a positive sequence $\{a_{i}\}_{i\in\mathbb{N}} \in  \ell^1$, we set 
		\[ \kappa_{ij}:= a_i a_j, \quad \forall~i, j \in {\mathbb N}. \]
		Then, the framework $(\mathcal{F}2)$ holds true by the following relation:
		\[ \displaystyle \frac{\kappa_{ij}}{\sum_{k\in\mathbb{N}} \kappa_{ik}} = \frac{a_j}{\sum_{k\in\mathbb{N}} a_k} >  \frac{a_j}{\sum_{k\in\mathbb{N}} a_k + 1 } =: \tilde\kappa_j. \]
		\vspace{0.1cm}
		\item
		The sequence $\boldsymbol{\tilde \kappa}=\{\tilde\kappa_j\}_{j\in\mathbb{N}}$ in $(\mathcal{F}2)$ is always contained in $\ell^1$. In fact, its $\ell^1$-norm is always smaller than $1$.
		\vspace{0.1cm}
		\item
		For the trivial initial data with ${\mathcal D}(\Theta^{\text{in}}) = 0$, we have
		\[  
		\dot{\theta}_i(t) = 0, \quad \forall~i\in\mathbb{N},\quad t > 0.
		\]
		Thus, the solution $\Theta$ is a steady state solution where whole phases are concentrated in a  singleton.
		
		\vspace{0.1cm}
		
	\end{enumerate}
\end{remark}	
\vspace{0.3cm}
As a consequence of Lemma \ref{L3.1}, one can see that the phase diameter $\mathcal{D}(\Theta)$ is also nonincreasing in $t$ as in Proposition \ref{P2.1}, though we do not have any estimate on the decay rate yet. 
\vspace{0.3cm}

\begin{corollary} \label{C3.1}
	Suppose that network topology $K=(\kappa_{ij})$ and initial data $\Theta^{\text{in}}$ satisfy $({\mathcal F}1)-({\mathcal F}2)$, and let $\Theta=\theta(t)$ be a solution to \eqref{C-1} with $p=\infty$. Then, the following assertions hold:	
	\begin{enumerate}
		\item The phase-diameter ${\mathcal D}(\Theta(t))$ is non-increasing  $t$:
		\[ {\mathcal D}(\Theta(t))\leq {\mathcal D}(\Theta^{\text{in}})<\pi,\quad \forall~t > 0. \]
		\item  $\Theta$ is a phase-locked state if and only if
		\[ \mathcal{D}(\Theta)\equiv 0. \]
	\end{enumerate}
\end{corollary}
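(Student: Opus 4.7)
My plan is to promote Lemma \ref{L3.1}, a local-in-time statement near the supremum of $\Theta$, to a global monotonicity of $\mathcal{D}(\Theta(\cdot))$ by a short continuation argument, and to deduce part (2) from the strict motion of the extremal oscillators already encoded in Lemma \ref{L3.1}.

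For part (1), I first dispose of the case $\mathcal{D}(\Theta^{\text{in}})=0$ by Remark \ref{R3.1}(3). Otherwise, Lemma \ref{L3.1} produces $\varepsilon,\delta>0$ such that every $i\in\mathbb{N}$ satisfies $\theta_i(t)\leq\overline{\theta}(0)$ on $(0,\delta)$: indices with $\theta_i^{\text{in}}\leq\overline{\theta}(0)-\varepsilon$ do so by the first bullet, while the remaining ones satisfy $\dot{\theta}_i<0$, hence $\theta_i(t)<\theta_i^{\text{in}}\leq\overline{\theta}(0)$. Taking a supremum in $i$ yields $\overline{\theta}(t)\leq\overline{\theta}(0)$ on $(0,\delta)$, and the same reasoning applied to $-\Theta$ — which solves \eqref{C-1} because the right-hand side is odd in $\Theta$ — gives $\underline{\theta}(t)\geq\underline{\theta}(0)$, so $\mathcal{D}(\Theta(t))\leq\mathcal{D}(\Theta^{\text{in}})$ on $(0,\delta)$. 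To globalize, I set
\[ T^{\ast}:=\sup\{T\geq 0:\mathcal{D}(\Theta(t))\leq\mathcal{D}(\Theta^{\text{in}}),~\forall t\in[0,T]\}\geq\delta, \]
and suppose $T^{\ast}<\infty$. By the Lipschitz continuity of $\mathcal{D}(\Theta(\cdot))$ from Lemma \ref{L2.2}, we have $\mathcal{D}(\Theta(T^{\ast}))\leq\mathcal{D}(\Theta^{\text{in}})<\pi$. If $\mathcal{D}(\Theta(T^{\ast}))=0$, uniqueness from Proposition \ref{P2.2} keeps the solution constant thereafter, contradicting $T^{\ast}<\infty$; otherwise, reapplying Lemma \ref{L3.1} to the time-shifted solution $\Theta(\cdot+T^{\ast})$ — still within the framework $(\mathcal{F}1)$–$(\mathcal{F}2)$ — provides $\mathcal{D}(\Theta(t))\leq\mathcal{D}(\Theta(T^{\ast}))\leq\mathcal{D}(\Theta^{\text{in}})$ on a right neighborhood of $T^{\ast}$, again contradicting maximality.

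For part (2), $(\Leftarrow)$ is immediate. For $(\Rightarrow)$, assume $\Theta$ is phase-locked. Then all phase differences, hence the right-hand sides of \eqref{C-1}, are independent of $t$, so $\dot{\theta}_i(t)=\sum_{j\in\mathbb{N}}\kappa_{ij}\sin(\theta_j^{\text{in}}-\theta_i^{\text{in}})=:c_i$ is constant in $t$, and phase-locking forces $c_i\equiv c$ for a single $c\in\mathbb{R}$. Suppose for contradiction that $\mathcal{D}(\Theta^{\text{in}})>0$. By the definition of the supremum, there is an index $i^{+}$ with $\theta_{i^{+}}^{\text{in}}>\overline{\theta}(0)-\varepsilon$; Lemma \ref{L3.1} then gives $c=\dot{\theta}_{i^{+}}(t)<0$. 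Applying Lemma \ref{L3.1} to $-\Theta$ produces an index $i^{-}$ with $\theta_{i^{-}}^{\text{in}}<\underline{\theta}(0)+\varepsilon$ and $\dot{\theta}_{i^{-}}(t)>0$, whence $c>0$. This contradiction forces $\mathcal{D}(\Theta^{\text{in}})=0$ and, by uniqueness, $\mathcal{D}(\Theta)\equiv 0$.

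The main obstacle is the continuation step in part (1): one must verify that Lemma \ref{L3.1} may be legitimately restarted at any positive time $T^{\ast}$, which reduces to the observations that $(\mathcal{F}2)$ is a condition on $K$ alone and that $(\mathcal{F}1)$ is preserved by the monotonicity being established on $[0,T^{\ast}]$. Once this bookkeeping is done, the continuation closes cleanly, and part (2) follows essentially from the strict-motion aspect of Lemma \ref{L3.1} together with the $\theta\mapsto-\theta$ symmetry of \eqref{C-1}.
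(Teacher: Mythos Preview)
Your proof is correct and follows essentially the same approach as the paper: both use Lemma \ref{L3.1} locally and then globalize---the paper via a connectedness argument (the set $\{t:\mathcal{D}(\Theta(t))\leq\mathcal{D}(\Theta^{\text{in}})\}$ is closed by Lemma \ref{L2.2} and open by Lemma \ref{L3.1}), you via an equivalent continuation/supremum argument---and both derive part (2) from the strict motion of extremal oscillators in Lemma \ref{L3.1}. Your use of the $\Theta\mapsto-\Theta$ symmetry to handle the infimum and your explicit reduction of the phase-locked case to a single constant $c$ are slightly more detailed than the paper's presentation, but the underlying ideas coincide.
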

\begin{proof}  (1)~We split the proof into two cases:
	\[ {\mathcal D}(\Theta^{\text {in}}) = 0, \quad  0 < {\mathcal D}(\Theta^{\text{in}}) < \pi. \]
	\vspace{0.1cm}
	
	\noindent $\diamond$~Case A $(  {\mathcal D}(\Theta^{\text{in}}) = 0)$: In this case, as discussed in Remark \ref{R3.1}(3), we have
	\[  {\mathcal D}(\Theta(t)) =  {\mathcal D}(\Theta^{\text{in}}) = 0, \quad t > 0, \]
	which yields the desired result. 
	
	\vspace{0.1cm}
	
	\noindent $\diamond$~Case B $(0 <  {\mathcal D}(\Theta^{\text{in}}) < \pi)$: 
	From Lemma \ref{L2.2}(2), the set 
	\[\left\{t\geq 0: \mathcal{D}(\Theta(t))\leq \mathcal{D}(\Theta^{\text{in}}) \right\} \]
	is closed in $[0,\infty)$. On the other hand, Lemma \ref{L3.1} implies that it is also a nonempty open subset of $[0,\infty)$. Therefore, we have 
	\[\left\{t\geq 0: \mathcal{D}(\Theta(t))\leq \mathcal{D}(\Theta^{\text{in}}) \right\}=[0,\infty), \]
	which is our desired result.\\
	
	\vspace{0.0cm} 
	
	\noindent (2)~It is sufficient to prove the `only if' part. If $\mathcal{D}(\Theta(t_0))>0$, then Lemma \ref{L3.1} can be applied, so that there exists a neighborhood of $\overline{\theta}(t_0)$ such that every $\theta_i$ in the neighborhood decreases strictly. Similarly, there exists a neighborhood of $\underline{\theta}(t_0)$ such that every $\theta_j$ in the neighborhood increases strictly. This contradicts the phase-locked assumption, as it is necessary to satisfy
	\[\dot{\theta}_i-\dot{\theta}_j=\frac{d}{dt}(\theta_i-\theta_j)=0, \quad i,j\in\mathbb{N}, \]
	for the phase-locked state $\Theta$.
\end{proof}
\vspace{0.3cm}
Note that Corollary \ref{C3.1} does not guarantee that the phase diameter is strictly decreasing. If $\overline{\theta}(t_0)$ is not a limit point of $\{\theta_i(t_0)\}_{i\in\mathbb{N}}$, then there exists a neighborhood $U$ of $\overline{\theta}(t_0)$ which contains only finitely many $\theta_i$'s, and the supremum $\overline{\theta}(t)$ is determined by those finitely many $\theta_i$'s for all $t$ sufficiently close to $t_0$. Therefore, Lemma \ref{L3.1} implies that the supremum $\overline{\theta}$ decreases strictly at time $t=t_0$ if $\overline{\theta}(t_0)$ is not a limit point of $\{\theta_i(t_0)\}_{i\in\mathbb{N}}$. However, if both $\overline{\theta}(t_0)$ and $\underline{\theta}(t_0)$ are the limit points of $\{\theta_i(t_0)\}_{i\in\mathbb{N}}$, Lemma \ref{L3.1} does not imply that the phase diameter is strictly decreasing at $t=t_0$. We can construct a solution $\Theta$ in which phase-diameter is nondecreasing in time, even if the framework $(\mathcal{F}1)-(\mathcal{F}2)$ are satisfied.
\vspace{0.3cm}

\begin{lemma}\label{L3.2}
	Suppose there are two increasing sequences $\{i_n\}_{n\in \mathbb{N}}$ and $\{j_n\}_{n\in \mathbb{N}}$ of $\mathbb{N}$ such that 
	\begin{equation}\label{3-2}
		\lim_{n\to\infty}\sum_{k\in\mathbb{N}} \kappa_{i_nk}=0,\quad \lim_{n\to\infty}\sum_{k\in\mathbb{N}} \kappa_{j_nk}=0,\quad \lim_{n\to\infty}\theta_{i_{n}}^{\text{in}}=\sup_{k \in {\mathbb N}}\theta_{k}^{\text{in}},\quad\lim_{n \to\infty}\theta_{j_{n}}^{\text{in}}=\inf_{k \in {\mathbb N}}\theta_{k}^{\text{in}},
	\end{equation}
	and let $\Theta=(\theta_1,\theta_2,\ldots)$ be a solution to \eqref{C-1} with $p=\infty$. Then, the phase-diameter ${\mathcal D}(\Theta)$ is nondecreasing along \eqref{C-1}.
\end{lemma}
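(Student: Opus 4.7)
My plan is to exploit hypothesis~\eqref{3-2}, which forces the subsequences $\{\theta_{i_n}\}$ and $\{\theta_{j_n}\}$ to be essentially frozen in time, so that the extremal phases of the configuration remain pinned at their initial supremum and infimum. In the homogeneous case $\nu_i\equiv 0$ considered here, the system~\eqref{C-1} immediately yields the per-index velocity estimate $|\dot\theta_i(t)|\leq \sum_{k\in\mathbb{N}}\kappa_{ik}$ (as in the derivation of $\eqref{B-10}_1$ in Lemma~\ref{L2.2}), and this bound is vanishingly small precisely along the two indexing subsequences.

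The execution proceeds in three short steps. First, I integrate the velocity bound on $[0,t]$ to obtain $|\theta_{i_n}(t)-\theta_{i_n}^{\text{in}}|\leq t\sum_{k\in\mathbb{N}}\kappa_{i_nk}$, with an analogous inequality for $j_n$; by the first two limits in~\eqref{3-2}, both right-hand sides vanish as $n\to\infty$ for each fixed $t$. Combined with the last two limits in~\eqref{3-2}, this yields
\[
\lim_{n\to\infty}\theta_{i_n}(t)=\sup_{k\in\mathbb{N}}\theta_k^{\text{in}},\qquad \lim_{n\to\infty}\theta_{j_n}(t)=\inf_{k\in\mathbb{N}}\theta_k^{\text{in}}.
\]
Second, since $\sup_{k\in\mathbb{N}}\theta_k(t)\geq \theta_{i_n}(t)$ and $\inf_{k\in\mathbb{N}}\theta_k(t)\leq \theta_{j_n}(t)$ for every $n$, passing to the limit in $n$ and subtracting produces $\mathcal{D}(\Theta(t))\geq \mathcal{D}(\Theta^{\text{in}})$ for every $t\geq 0$, which is the core non-decrease statement. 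Third, to upgrade this to the time-monotonicity $\mathcal{D}(\Theta(t))\geq \mathcal{D}(\Theta(s))$ for $0\leq s\leq t$, I would restart the same argument at time $s$: the coupling-sum vanishing conditions are time-independent, and the identity $\lim_n\theta_{i_n}(s)=\sup_{k\in\mathbb{N}}\theta_k^{\text{in}}$ established in step one plays the role of the third and fourth limits of~\eqref{3-2} at the new reference time.

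The main obstacle I foresee lies in the bootstrapping of the third step: restarting the argument at time $s$ only delivers $\mathcal{D}(\Theta(t))\geq \sup_{k\in\mathbb{N}}\theta_k^{\text{in}}-\inf_{k\in\mathbb{N}}\theta_k^{\text{in}}$, which coincides with $\mathcal{D}(\Theta(s))$ only when no interior oscillator has strayed above $\sup_{k\in\mathbb{N}}\theta_k^{\text{in}}$ or below $\inf_{k\in\mathbb{N}}\theta_k^{\text{in}}$ by time $s$. This non-escape is the residual content that must be verified; it is most naturally obtained by pairing Lemma~\ref{L3.2} with the upper bound of Corollary~\ref{C3.1}, which together pinch $\mathcal{D}(\Theta(t))\equiv \mathcal{D}(\Theta^{\text{in}})$ and realize the constant-diameter phenomenon advertised as the novel feature of the infinite Kuramoto model.
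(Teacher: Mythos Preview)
Your first two steps reproduce the paper's argument almost verbatim: the paper integrates the per-index velocity bound to get $|\theta_i(t)-\theta_i^{\text{in}}|\le t\sum_k\kappa_{ik}$, applies the triangle inequality to $|\theta_{i_n}(t)-\theta_{j_n}(t)|$, and then lets the two $\varepsilon$-parameters (controlling the row sums and the proximity of the initial phases to the extremals) tend to zero. Your version computes the two limits $\lim_n\theta_{i_n}(t)$ and $\lim_n\theta_{j_n}(t)$ first and then compares with the running sup/inf; this is an equivalent packaging of the same estimate.

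Your worry about the third step is unnecessary, because the paper does not prove---and the downstream use in Corollary~\ref{C3.2} does not require---the full time-monotonicity $\mathcal D(\Theta(t))\ge\mathcal D(\Theta(s))$ for $0\le s\le t$. The paper's proof concludes only with $\mathcal D(\Theta(t))\ge\mathcal D(\Theta^{\text{in}})$ for every $t\ge0$, and the phrase ``nondecreasing along \eqref{C-1}'' in the lemma is to be read in exactly this weaker sense. Your steps one and two already deliver this, so the bootstrap you attempt is surplus to requirements; your diagnosis that the stronger statement would need the upper bound from Corollary~\ref{C3.1} is correct, and that is precisely how the constant-diameter Corollary~\ref{C3.2} is assembled.
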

\begin{proof}
	From Lemma \ref{L2.2}, one has 
	\[ \sup_{0 \leq t < \infty} |\dot{\theta}_{i}(t)|\leq\sum_{k\in\mathbb{N}} \kappa_{ik},\quad 
	\left| \theta_{i}(t) - \theta_{i}^{\text{in}} \right|\leq t\sum_{k\in\mathbb{N}} \kappa_{ik},\quad \forall~i\in \mathbb{N}.
	\]
	Then, we use the triangle inequality and the above relations to obtain
	\begin{align}
		\begin{aligned} \label{C-1-3-2}
			\left|\theta_{i}\left(t\right)-\theta_{j}\left(t\right)\right| &\geq\left|\theta_{i}^{\text{in}}-\theta_{j}^{\text{in}}\right|-\left|\theta_{i}^{\text{in}}-\theta_{i}\left(t\right)\right|-\left|\theta_{j}^{\text{in}}-\theta_{j}\left(t\right)\right|  \\
			&\geq\left|\theta_{i}^{\text{in}}-\theta_{j}^{\text{in}}\right|-\left(\sum_{k\in\mathbb{N}} \kappa_{ik}+\sum_{k\in\mathbb{N}} \kappa_{jk}\right)t.
		\end{aligned}
	\end{align}
	On the other hand, we use the first two conditions in \eqref{3-2} to  see that  for every $\varepsilon_{1}>0$, there exists a natural number $N=N\left(\varepsilon_{1}\right)\in\mathbb{N}$
	such that 
	\begin{equation} \label{C-1-3-3} 
		n>N \quad \Longrightarrow \quad \sum_{k\in\mathbb{N}} \kappa_{i_nk}<\varepsilon_{1} \quad \mbox{and} \quad  \sum_{k\in\mathbb{N}} \kappa_{j_nk}<\varepsilon_{1}.
	\end{equation}
	For every $\varepsilon_{2}>0$, one can also find $M=M\left(\varepsilon_{2}\right)\in\mathbb{N}$ such that for $n>M$,
	\begin{equation} \label{C-1-3-4}
		\theta_{i_{n}}>\overline{\theta}-\varepsilon_{2} \quad \mbox{and} \quad \theta_{j_{n}}<\underline{\theta}+\varepsilon_{2}. 
	\end{equation}
	Then, by using $\eqref{C-1-3-3}-\eqref{C-1-3-4}$ to the relation \eqref{C-1-3-2} with the index pair $(i_n, j_n)$ with $n \geq N,M$, we have
	\begin{align*}
		{\mathcal D}\left(\Theta\left(t\right)\right) & =\sup_{m,n}\left|\theta_{m}\left(t\right)-\theta_{n}\left(t\right)\right|  \ge\left|\theta_{i_{n}}^{\text{in}}-\theta_{j_{n}}^{\text{in}}\right|-\left(\sum_{k\in\mathbb{N}} \kappa_{i_nk}+\sum_{k\in\mathbb{N}} \kappa_{j_nk}\right)t\\
		& \ge {\mathcal D}(\Theta^{\text{in}} )-2\varepsilon_{1}t-2\varepsilon_{2}.
	\end{align*}
	Since $\varepsilon_{1}$ and $\varepsilon_{2}$ can be arbitrary positive numbers, we can take $\varepsilon_{1},\varepsilon_{2} \to 0$ for each fixed $t$ to obtain the desired result. Therefore, we have
	\[
	{\mathcal D}\left(\Theta\left(t\right)\right) \geq  {\mathcal D}(\Theta^{\text{in}} ), \quad t \geq 0.
	\]
\end{proof}
\vspace{0.3cm}

\begin{remark} \label{R3.2}
	Below, we provide network topology and initial data satisfying a set of relations in $(\mathcal{F}1)-(\mathcal{F}2)$ and \eqref{3-2}.  More precisely, we set 
	\[  \kappa_{ij}=3^{-\left(i+j\right)} \quad \mbox{and} \quad  \theta_{i}^{\text{in}}=\left(-1\right)^{i}\pi/3, \quad i, j \in {\mathbb N}. \]
	Then, one has 
	\[  \sup_{0 \leq t < \infty} \left|\dot{\theta}_{i}(t) \right|\le\sum_{j\in\mathbb{N}} \kappa_{ij}=\frac{1}{2\cdot3^{i-1}} \]
	which yields
	\[
	\left|\theta_{i}^{\text{in}}-\theta_{i}(t) \right|\le\frac{t}{2\cdot3^{i-1}}, \quad t \geq 0. 
	\]
	Therefore, we have
	\[
	\left|\theta_{i}\left(t\right)-\theta_{j}\left(t\right)\right| \ge\left|\theta_{i}^{\text{in}}-\theta_{j}^{\text{in}}\right|-\left|\theta_{i}^{\text{in}}-\theta_{i}\left(t\right)\right|-\left|\theta_{j}^{\text{in}}-\theta_{j}\left(t\right)\right| \ge\left|\theta_{i}^{\text{in}}-\theta_{j}^{\text{in}}\right|-\frac{3t}{2}\left(\frac{1}{3^{i}}+\frac{1}{3^{j}}\right).
	\]
	This gives
	\[
	\mathcal{D}\left(\Theta(t)\right)\ge\left|\theta_{i}^{\text{in}}-\theta_{j}^{\text{in}}\right|-\frac{3t}{2}\left(\frac{1}{3^{i}}+\frac{1}{3^{j}}\right),\quad i,j\in\mathbb{N}.
	\]
	By letting $i=2k+1$ and $j=2k$, we obtain
	\begin{align*}
		\mathcal{D}\left(\Theta(t)\right) & \ge\left|\theta_{2k+1}^{\text{in}}-\theta_{2k}^{\text{in}}\right|-\frac{3t}{2}\left(\frac{1}{3^{2k+1}}+\frac{1}{3^{2k}}\right)=\frac{2\pi}{3}-\frac{2t}{3^{2k}},\quad k\in\mathbb{N}.
	\end{align*}
	Since $\mathcal{D}\left(\Theta^{\text{in}}\right)=\frac{2\pi}{3}$, we have 
	$\mathcal{D}\left(\Theta(t)\right)\ge\frac{2\pi}{3}=\mathcal{D}\left(\Theta^{\text{in}}\right)$.
\end{remark}
\vspace{0.3cm}

Combining the results we have obtained so far, we can characterize the sufficient framework which makes the phase-diameter ${\mathcal D}(\Theta(t))$ constant with respect to $t$. 
\vspace{0.3cm}

\begin{corollary}\label{C3.2} 
	Suppose that network topology and initial data satisfy $(\mathcal{F}1)-(\mathcal{F}2)$ and \eqref{3-2}, and  let $\Theta$ be a solution to \eqref{C-1} with $p=\infty$. Then, the phase-diameter of the configuration $\Theta$ is constant along time:
	\[ {\mathcal D}(\Theta(t)) = {\mathcal D}(\Theta^{\text{in}}), \quad t \geq 0.  \]
\end{corollary}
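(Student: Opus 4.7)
The plan is to observe that the conclusion is now an immediate ``sandwich" consequence of the two preceding results, once we have verified that the hypotheses of both results are simultaneously in force. Indeed, $(\mathcal{F}1)$--$(\mathcal{F}2)$ feed into Corollary \ref{C3.1} and condition \eqref{3-2} feeds into Lemma \ref{L3.2}, so both inequalities
\[
\mathcal{D}(\Theta(t)) \leq \mathcal{D}(\Theta^{\text{in}}) \quad \text{and} \quad \mathcal{D}(\Theta(t)) \geq \mathcal{D}(\Theta^{\text{in}})
\]
can be invoked side by side, forcing equality for every $t \geq 0$.

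In more detail, I would first appeal to Corollary \ref{C3.1}(1), whose hypotheses coincide with $(\mathcal{F}1)$--$(\mathcal{F}2)$, to obtain the upper bound $\mathcal{D}(\Theta(t)) \leq \mathcal{D}(\Theta^{\text{in}})$ for all $t \geq 0$. Next, I would invoke Lemma \ref{L3.2}, whose hypotheses are precisely the three limiting conditions in \eqref{3-2}, to obtain the matching lower bound $\mathcal{D}(\Theta(t)) \geq \mathcal{D}(\Theta^{\text{in}})$. Combining the two estimates yields the desired constancy identity $\mathcal{D}(\Theta(t)) = \mathcal{D}(\Theta^{\text{in}})$, and the proof is complete.

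The only minor subtlety worth recording is that the frameworks $(\mathcal{F}2)$ and \eqref{3-2} are not automatically compatible: $(\mathcal{F}2)$ demands a uniform positive lower bound on the normalized row weights $\kappa_{ij}/\sum_k \kappa_{ik}$, while \eqref{3-2} requires that the raw row sums $\sum_k \kappa_{i_n k}$ vanish along a subsequence. However, both can coexist, as confirmed by the explicit choice $\kappa_{ij}= 3^{-(i+j)}$ together with $\theta_i^{\text{in}} = (-1)^i \pi/3$ from Remark \ref{R3.2}: there the normalized weights reduce to $2 \cdot 3^{-j}$ (independent of $i$ and bounded below by $\tilde\kappa_j = 3^{-j} \in \ell^1$), while the row sums $\tfrac{1}{2}\cdot 3^{-i}$ tend to zero. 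Hence the hypotheses of the corollary are non-vacuous, and I do not expect any genuine obstacle in the argument; the entire substance has already been carried out in Lemma \ref{L3.1}, Corollary \ref{C3.1} and Lemma \ref{L3.2}, and what remains is only the logical combination.
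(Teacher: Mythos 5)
Your proof is correct and is exactly the argument the paper intends: the corollary is stated without a separate proof precisely because it follows by combining the non-increase from Corollary \ref{C3.1}(1) (under $(\mathcal{F}1)$--$(\mathcal{F}2)$) with the non-decrease from Lemma \ref{L3.2} (under \eqref{3-2}). Your extra check of non-vacuousness via the example of Remark \ref{R3.2} is a nice but inessential addition.
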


This counterintuitive example is the case when all particles are moving away from the boundary, but other particles closer to the boundary than we just observed continue to appear with a slower speeds. Macroscopically, it will look as if there is a fixed boundary continuously emitting new particles which velocities are slower for particles emitted later.\\
This is a unique feature of the countable Kuramoto model compared to the original Kuramoto model with finitely many particles. The sufficient framework leading to the exponential convergence of phase for the homogeneous ensemble as in Proposition \ref{P2.1}(3) will be presented at the end of Section \ref{sec:4}.\\

\subsubsection{Lyapunov functional}
Now, we will analyze the dynamics of \eqref{C-1} with the following symmetric summable network topology, which is the first case of Remark \ref{R2.1}:
\begin{equation}\label{C-2-1}
	K=(\kappa_{ij})\in\ell^{1,1},\quad \kappa_{ij}= \kappa_{ji} \geq  0,\quad i,j\in\mathbb{N}.
\end{equation}
Since $\ell^{1,1}\subset \ell^{p,1}$ for all $p\in [1,\infty]$, one can construct an $\ell^p$-solution by just considering an $\ell^p$ initial data $\Theta^{\text{in}}$ under \eqref{C-2-1}. In addition, under the condition \eqref{C-2-1}, every $\ell^\infty$-solution $\Theta$ to \eqref{C-1} satisfy
\[\dot{\Theta}(t)\in\ell^1, \quad t\geq 0,\]
even when $\Theta$ itself is not contained in $\ell^1$. Note that the condition $\kappa_{ij}=\kappa_{ji}$ also makes the finite Kuramoto model a gradient flow (see Proposition \ref{P2.1}). 

\begin{theorem}\label{T3.1}
	Suppose that the network topology $(\kappa_{ij})$ satisfies \eqref{C-2-1}, and let $\Theta$ be a solution to \eqref{C-1} with $p=\infty$. Then, we have
	\[ \lim_{t \to \infty} \|\dot{\Theta}(t)\|_2 = 0.  \]
\end{theorem}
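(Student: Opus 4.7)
The natural strategy is to exploit the gradient flow structure for the symmetric summable network and then upgrade the resulting energy-dissipation identity via Barbalat's lemma. Under \eqref{C-2-1}, the potential
$$P(\Theta) := \tfrac{1}{2} \sum_{i,j \in \mathbb{N}} \kappa_{ij}\bigl(1 - \cos(\theta_i - \theta_j)\bigr)$$
is well-defined and satisfies $0 \leq P(\Theta) \leq \|K\|_{1,1}$ for every $\Theta \in \ell^\infty$, while the symmetry $\kappa_{ij} = \kappa_{ji}$ yields the componentwise identity $\dot\theta_i = -\partial_{\theta_i} P(\Theta)$ (which is precisely the content of Proposition \ref{P3.4}). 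Differentiating $P(\Theta(t))$ along the flow — the termwise differentiation being legitimate because $K \in \ell^{1,1}$ dominates each summand and the phase derivatives are uniformly bounded — gives
$$\frac{d}{dt} P(\Theta(t)) = \sum_{i\in\mathbb{N}} \partial_{\theta_i} P(\Theta(t))\,\dot\theta_i(t) = -\|\dot\Theta(t)\|_2^2.$$
Integrating over $[0,\infty)$ and using $P \geq 0$ yields the crucial dissipation estimate
$$\int_0^{\infty} \|\dot\Theta(t)\|_2^2 \, dt \leq P(\Theta^{\text{in}}) \leq \|K\|_{1,1} < \infty.$$

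Next I plan to show that $t \mapsto \|\dot\Theta(t)\|_2^2$ is uniformly continuous, so that Barbalat's lemma can be applied. Since $\mathcal{V} \equiv 0$, Lemma \ref{L2.2} together with Remark \ref{R2.2} provide the per-coordinate bounds $|\dot\theta_i(t)| \leq \sum_j \kappa_{ij}$ and $|\ddot\theta_i(t)| \leq 2\|K\|_{\infty,1}\sum_j \kappa_{ij}$, uniformly in $t$. Summing in $i$ and using $K \in \ell^{1,1}$ (so in particular $\|K\|_{\infty,1} \leq \|K\|_{1,1} < \infty$) produces the uniform-in-$t$ bounds
$$\|\dot\Theta(t)\|_2 \leq \|\dot\Theta(t)\|_1 \leq \|K\|_{1,1}, \qquad \|\ddot\Theta(t)\|_2 \leq \|\ddot\Theta(t)\|_1 \leq 2\|K\|_{\infty,1}\|K\|_{1,1}.$$
Differentiating once more and applying Cauchy--Schwarz then gives
$$\left|\frac{d}{dt} \|\dot\Theta(t)\|_2^2\right| = 2\bigl|\langle \dot\Theta(t), \ddot\Theta(t)\rangle_{\ell^2}\bigr| \leq 2\|\dot\Theta(t)\|_2\,\|\ddot\Theta(t)\|_2 \leq 4\|K\|_{\infty,1}\|K\|_{1,1}^2,$$
so $\|\dot\Theta(\cdot)\|_2^2$ is Lipschitz, hence uniformly continuous on $[0,\infty)$.

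Combining the integrability from the first paragraph with the uniform continuity from the second, Barbalat's lemma forces $\|\dot\Theta(t)\|_2^2 \to 0$, and hence $\|\dot\Theta(t)\|_2 \to 0$ as $t \to \infty$. The main technical obstacle lies in the very first step: one must justify that the infinite double sum defining $P(\Theta(t))$ is genuinely differentiable in $t$ with the expected termwise expression for its derivative. I expect this to be handled cleanly by dominated convergence, with dominating majorants furnished by $1 - \cos \leq 2$ and by the Lipschitz bound $|1 - \cos(\theta_i(t) - \theta_j(t)) - 1 + \cos(\theta_i(s) - \theta_j(s))| \leq C\kappa_{ij}^{-1}\cdot \kappa_{ij}|t-s|$ coming from the uniform bound on $|\dot\theta_k|$, both of which are summable against $(\kappa_{ij}) \in \ell^{1,1}$. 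Once this exchange of differentiation and summation is secured, the rest of the argument is a standard Lyapunov--Barbalat routine.
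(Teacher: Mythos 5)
Your proposal is correct and follows essentially the same route as the paper: the same Lyapunov functional $P$, the same dissipation identity $\frac{d}{dt}P(\Theta(t)) = -\|\dot{\Theta}(t)\|_2^2$ justified by termwise differentiation under $K\in\ell^{1,1}$, and the same second-derivative/uniform-continuity bound feeding into Barbalat's lemma. The only (cosmetic) difference is that you apply the integral version of Barbalat to $\|\dot{\Theta}(t)\|_2^2$, whereas the paper applies the differential version to $P(\Theta(t))$ itself, using monotonicity and boundedness of $P$ together with the bound on $\frac{d^2}{dt^2}P$.
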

\begin{proof} 
	We will apply a Lyapunov functional approach. \newline
	
	\noindent $\bullet$~Step A: First, we suggest the following function as the Lyapunov functional to \eqref{C-1}:
	\begin{equation} \label{C-3}
		P(\Theta) = \frac{1}{2}\sum_{i,j\in\mathbb{N}} \kappa_{ij} (1-\cos(\theta_i - \theta_j)) \geq 0.
	\end{equation}
	Then, we claim:
	\begin{align}
		\begin{aligned}  \label{C-4}
			& \frac{d}{dt}P(\Theta) = -\sum_{i\in\mathbb{N}} |\dot{\theta}_{i}|^{2}=-\|\dot{\Theta}\|_2^2, \\
			& \left|\frac{d}{dt}P(\Theta(t))\right| \leq \sum_{i\in\mathbb{N}} \left(\sum_{j\in\mathbb{N}} \kappa_{ij}\right)^{2}= \|K\|_{2,1}^2\leq \|K\|_{1,1}^2, \\
			&  \left| \frac{d^2}{dt^2}P(\Theta) \right| \leq 2 \|K \|^2_{\infty, 1} \|K\|_{1,1} \leq 2\|K\|_{1,1}^3. 
		\end{aligned}
	\end{align}
	Below, we derive the above estimates in \eqref{C-4} one by one. \newline
	
	\noindent (i)~We differentiate \eqref{C-3} with respect to $t$ and use \eqref{C-1} to find 
	\begin{align}
		\begin{aligned} \label{C-5}
			\frac{d}{dt}P(\Theta) &=\frac{1}{2}\sum_{i,j\in\mathbb{N}} \kappa_{ij}\sin\left(\theta_{i}-\theta_j\right)\left(\dot{\theta}_{i}-\dot{\theta}_{j}\right)\\
			& =\frac{1}{2}\sum_{i,j,k\in\mathbb{N}} \kappa_{ij}\sin\left(\theta_{i}-\theta_j\right)\kappa_{ik}\sin\left(\theta_{k}-\theta_{i}\right)  \\
			&\quad -\frac{1}{2}\sum_{i,j,k\in\mathbb{N}} \kappa_{ij}\sin\left(\theta_{i}-\theta_j\right)\kappa_{jk}\sin\left(\theta_{k}-\theta_j\right)\\
			& \overset{i\leftrightarrow j}{=}-\sum_{i,j,k\in\mathbb{N}}\kappa_{ij}\sin\left(\theta_{i}-\theta_j\right)\kappa_{ik}\sin\left(\theta_{i}-\theta_{k}\right) \\
			&=-\sum_{i\in\mathbb{N}} \left(\sum_{j\in\mathbb{N}}\kappa_{ij}\sin\left(\theta_{i}-\theta_j\right)\right)^{2} =-\sum_{i\in\mathbb{N}}  |\dot{\theta}_{i} |^{2} \leq 0.
		\end{aligned}
	\end{align}
	This also yields
	\[ \Big|  \frac{d}{dt}P(\Theta)  \Big|  = \Bigg|  \sum_{i\in\mathbb{N}} \left(\sum_{j\in\mathbb{N}}\kappa_{ij}\sin\left(\theta_{i}-\theta_j\right)\right)^{2}  \Bigg| \leq \sum_{i\in\mathbb{N}} \left(\sum_{j\in\mathbb{N}}\kappa_{ij} \right)^{2}=\|K\|_{2,1}^2.  
	\]	
	In addition, we differentiate \eqref{C-5} and apply Lemma \ref{L2.2}, Remark \ref{R2.2} and \eqref{C-2-1} to get 
	\[ \Big| \frac{d^2}{dt^2}P(\Theta) \Big| \leq 2  \sum_{i\in\mathbb{N}} |{\dot \theta}_{i} | |{\ddot \theta}_i| \leq { 2} \|K \|_{\infty, 1} \sum_{i\in\mathbb{N}} |{\ddot \theta}_i | \leq 2 \|K \|^2_{\infty, 1} \sum_{i,j\in\mathbb{N}}  \kappa_{ij}  =2\|K\|_{\infty,1}^2\|K\|_{1,1}.  \]
	\noindent $\bullet$~Step B:~Next, we will show that $P$ satisfies all the conditions for Barbalat's lemma, i.e.,  
	\begin{equation*} \label{C-6}
		\exists~\lim_{t \to \infty} P(\Theta(t)), \quad \frac{dP(\Theta)}{dt}~\mbox{is uniformly continuous}.
	\end{equation*}
	The convergence of $P(\Theta)$ comes from the fact that $P(\Theta)$ is a nonincreasing and bounded from below, which we have already verified in { $\eqref{C-4}_1$ and $\eqref{C-4}_2$. In addition, the uniform continuity of $\frac{dP}{dt}$ is a direct consequence of the boundedness of $\frac{d^2P}{dt^2}$, which we have already verified in $\eqref{C-4}_3$.} From \eqref{C-3} and $\eqref{C-4}_1$, $P\left(\Theta(t)\right)$ is a non-increasing function bounded below. Thus, $P(\Theta(t))$ converges as $t \to \infty$. \\
	
	Finally, we apply the differential version of Barbalat's lemma (see Lemma \ref{LA-2}) to conclude
	\[  \lim_{t \to \infty} \frac{dP(\Theta(t))}{dt}  = 0, \quad \mbox{i.e.,} \quad \lim_{t \to \infty} \|\dot{\Theta}(t)\|_2 = 0. \]
\end{proof}

\subsection{$\ell^p$-solution: additional properties for $p<\infty$}\label{sec:3.2}
In this subsection, we will study some special properties for $\ell^p$-solutions which cannot be found from generic $\ell^\infty$-solutions. 
\subsubsection{Strictly decreasing diameter}
If $\Theta(t)\in \ell^p$ for some $1\leq p<\infty$, the only possible limit point for the set $\{\theta_i(t)\}_{i\in\mathbb{N}}$ is $\theta=0$, so that either $\overline{\theta}(t)$ or $\underline{\theta}(t)$ is not a limit point for all time $t$. Then, Lemma \ref{L3.1} implies that under the framework $(\mathcal{F}1)-(\mathcal{F}2)$, the diameter $\mathcal{D}(\Theta(t))$ is strictly decreasing for all $t\geq 0$. However, if there are symmetric matrix $A=(a_{ij})\in \ell^{p,p}$ and $(\kappa_1,\kappa_2,\ldots)\in\ell^{q}$  satisfying \eqref{B-8}:
\begin{equation*}
	\kappa_{ij}=a_{ij}\kappa_j,\quad \forall~i,j\in\mathbb{N}
\end{equation*}
where $q$ is the Holder conjugate of $p$.
Lemma \ref{L2.1} implies that the weighted average 
\[\sum_{i\in \mathbb{N}}\kappa_i\theta_i \]
is a constant of motion of the flow $\Theta\in \mathcal{C}^1(\mathbb{R}_+,\ell^p)$. Therefore, if $\sum_{i\in \mathbb{N}}\kappa_i\theta_i^{\text{in}}$ is nonzero, $\ell^p$-solution $\Theta$ can never converge to a point $(0,0,\ldots)$ in $\ell^p$-norm. However, there exists a possibility for $\Theta$ to converge in $\ell^\infty$-norm (see Section \ref{sec:4}).

\vspace{0.2cm}
\subsubsection{Gradient flow formulation} \label{sec:3.3}

In the finite Kuramoto model, the gradient flow approach plays an essential role in the proof of Proposition \ref{P2.1}. So we wondered if this approach would work as well for infinite-dimensional model. To study the gradient flow structure of model \eqref{C-1}, a suitable space should be equipped with an inner product structure. We found the reason in differential manifold theory.

For a differential Riemannian manifold $({\mathcal M}, g_{ij})$,  let $f: {\mathcal M} \rightarrow\mathbb{R}$ be a smooth function. Then, the gradient vector $\nabla f$ is obtained by identifying differential $df:T_{p}{\mathcal M} \rightarrow T_{p}\mathbb{R}\cong\mathbb{R}$ by a covector $\left\langle \nabla f,-\right\rangle _{\mathbb{R}^{n}}$. Hence in this subsection, we will consider the Cauchy problem for \eqref{C-1} in $\ell^2$-space. We will begin with a Lemma from calculus.

\begin{lemma}\label{L3.4} For $\theta,h\in\mathbb{R}$ with $\left|h\right|<1$,
	one has 
	\[
	{\left|2\sin\left(\theta+\frac{h}{2}\right)\sin\frac{h}{2}-h\sin\theta\right|\le h^{2}.}
	\]
\end{lemma}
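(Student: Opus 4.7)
The plan is to convert the product of sines into a difference of cosines and then invoke Taylor's theorem with Lagrange remainder. First, I would apply the product-to-sum identity $2\sin A\sin B = \cos(A-B)-\cos(A+B)$ with the choice $A = \theta + \tfrac{h}{2}$, $B = \tfrac{h}{2}$; this collapses the left-hand side cleanly, since
\[
2\sin\left(\theta+\tfrac{h}{2}\right)\sin\tfrac{h}{2} \;=\; \cos\theta - \cos(\theta+h),
\]
so the quantity whose absolute value we want to bound reduces to $\cos\theta - \cos(\theta+h) - h\sin\theta$, with all half-angles eliminated.

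Next, I would Taylor-expand the smooth function $g(h) := \cos(\theta+h)$ about $h=0$. Since $g(0)=\cos\theta$, $g'(0)=-\sin\theta$, and $g''(s)=-\cos(\theta+s)$, the Lagrange form of the remainder gives some $\xi \in (0,1)$ (depending on $\theta$ and $h$) with
\[
\cos(\theta+h) \;=\; \cos\theta - h\sin\theta - \tfrac{h^{2}}{2}\cos(\theta+\xi h).
\]
Substituting and rearranging, the quantity to be estimated equals $\tfrac{h^{2}}{2}\cos(\theta+\xi h)$, whose absolute value is at most $\tfrac{h^{2}}{2} \le h^{2}$.

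There is essentially no obstacle here; the only care needed is to expand $\cos(\theta+h)$ as a function of $h$ with $\theta$ frozen, rather than expanding each sine factor separately, so that the linear term reproduces exactly $-h\sin\theta$ and the remainder is manifestly quadratic in $h$. In fact the argument produces the strictly sharper bound $\tfrac{h^{2}}{2}$ without ever invoking the hypothesis $|h|<1$; the stated constant $h^{2}$ is simply a convenient weakening, presumably because that is the form in which the lemma will later be summed.
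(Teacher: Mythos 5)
Your proof is correct, and it takes a genuinely different route from the paper's. You eliminate the half-angles via the product-to-sum identity $2\sin(\theta+\tfrac{h}{2})\sin\tfrac{h}{2}=\cos\theta-\cos(\theta+h)$ and then apply second-order Taylor expansion with Lagrange remainder to $h\mapsto\cos(\theta+h)$, which yields the exact representation $\tfrac{h^{2}}{2}\cos(\theta+\xi h)$ and hence the sharper bound $\tfrac{h^{2}}{2}$, valid for all real $h$ with no use of the hypothesis $|h|<1$. The paper instead keeps the product form, adds and subtracts $h\sin(\theta+\tfrac{h}{2})$, and bounds the two resulting terms separately: the first by the elementary estimate $|\sin x - x|\le |x|^{3}/6$ applied to $x=\tfrac{h}{2}$, the second by the mean value theorem for $\sin$, arriving at $\tfrac{|h|^{3}}{24}+\tfrac{h^{2}}{2}<h^{2}$, where the strict inequality is exactly where $|h|<1$ enters. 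Both arguments are short and correct; yours buys a cleaner identity-based remainder and a constant $\tfrac12$ that makes the hypothesis on $h$ superfluous, while the paper's stays within the crude sine bounds it already uses elsewhere and never needs to invoke Taylor's theorem explicitly. Since the lemma is only applied with increments $h_i-h_j$ of small $\ell^2$ norm in Proposition \ref{P3.4}, either constant suffices downstream.
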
 
\begin{proof}
	We use elementary inequality: 
	\[
	x-\frac{x^{3}}{6}\leq\sin x\leq x,\qquad\forall~x\ge0,
	\]
	and mean-value theorem to get {
		\begin{align*}
			& \left|2\sin\left(\theta+\frac{h}{2}\right)\sin\frac{h}{2}-h\sin\theta\right|\\
			& \hspace{0.5cm}\le\left|2\sin\left(\theta+\frac{h}{2}\right)\sin\frac{h}{2}-h\sin\left(\theta+\frac{h}{2}\right)\right|+\left|h\sin\left(\theta+\frac{h}{2}\right)-h\sin\theta\right|\\
			& \hspace{0.5cm}\leq2\left|\sin\left(\theta+\frac{h}{2}\right)\right|\Big|\sin\frac{h}{2}-\frac{h}{2}\Big|+|h|\Big|\sin\left(\theta+\frac{h}{2}\right)-\sin\theta\Big|\\
			& \hspace{0.5cm}\le\frac{h^{3}}{24}+\frac{h^{2}}{2}<h^{2}.
		\end{align*}
	}
\end{proof}
\begin{proposition}\label{P3.4} Suppose that the network topology
	$K$ satisfies 
	\[
	K=(\kappa_{ij})\in\ell^{1,1},\quad\kappa_{ij}=\kappa_{ji}>0,\quad i,j\in\mathbb{N}.
	\]
	Then, every $\ell^{2}$-solution to \eqref{C-1} is a gradient flow
	with the potential 
	\[
	P\left(\Theta\right)=\frac{1}{2}\sum_{i,j\in\mathbb{N}}\kappa_{ij}\left(1-\cos\left(\theta_{i}-\theta_{j}\right)\right).
	\]
\end{proposition}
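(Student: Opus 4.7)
The plan is to verify directly the Fréchet differentiability of the potential $P:\ell^2\to\mathbb R$ and identify the gradient via the Riesz isomorphism on $\ell^2$, showing $\dot{\Theta}=-\nabla P(\Theta)$. First I would check that $P$ is well defined on $\ell^2$: using $1-\cos\alpha \leq 2$ and the hypothesis $K\in\ell^{1,1}$, the double sum is bounded by $\|K\|_{1,1}$, so $P(\Theta)$ is finite for every $\Theta\in\ell^2$.

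Next, for the Fréchet derivative, fix $\Theta\in\ell^2$ and take a perturbation $H=(h_1,h_2,\ldots)\in\ell^2$ with $\|H\|_\infty<1$. Set $\alpha_{ij}=\theta_i-\theta_j$ and $\beta_{ij}=h_i-h_j$. The product-to-sum identity gives
\[
P(\Theta+H)-P(\Theta)=\sum_{i,j\in\mathbb N}\kappa_{ij}\,\sin\!\Big(\alpha_{ij}+\tfrac{\beta_{ij}}{2}\Big)\sin\!\tfrac{\beta_{ij}}{2}.
\]
Applying Lemma \ref{L3.4} to each term (with $\theta=\alpha_{ij}$, $h=\beta_{ij}$) extracts a linear part plus a quadratic remainder:
\[
P(\Theta+H)-P(\Theta)=\frac{1}{2}\sum_{i,j\in\mathbb N}\kappa_{ij}(h_i-h_j)\sin(\theta_i-\theta_j)+R(H),
\]
with $|R(H)|\le \frac{1}{2}\sum_{i,j}\kappa_{ij}(h_i-h_j)^2\le \sum_{i,j}\kappa_{ij}(h_i^2+h_j^2)=2\sum_i h_i^2\sum_j\kappa_{ij}\le 2\|K\|_{\infty,1}\|H\|_2^2$, where the last equality uses the symmetry $\kappa_{ij}=\kappa_{ji}$. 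Hence $R(H)=o(\|H\|_2)$.

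Then I would symmetrize the linear part. Writing the sum as $A-B$ with $A=\sum_{i,j}\kappa_{ij}h_i\sin(\theta_i-\theta_j)$ and $B=\sum_{i,j}\kappa_{ij}h_j\sin(\theta_i-\theta_j)$, swapping $i\leftrightarrow j$ in $B$ and using the symmetry of $\kappa_{ij}$ together with the oddness of $\sin$ yields $B=-A$, so
\[
dP(\Theta)[H]=A=\sum_{i\in\mathbb N}h_i\sum_{j\in\mathbb N}\kappa_{ij}\sin(\theta_i-\theta_j)=-\sum_{i\in\mathbb N}h_i\,\dot\theta_i=-\langle H,\dot\Theta\rangle_{\ell^2}.
\]
Since the inequalities $|\dot\theta_i|\le\sum_j\kappa_{ij}$ and $\sum_i\big(\sum_j\kappa_{ij}\big)^2\le\|K\|_{2,1}^2\le\|K\|_{1,1}^2<\infty$ guarantee $\dot\Theta\in\ell^2$, the Riesz representation identifies $\nabla P(\Theta)=-\dot\Theta$, which is exactly the gradient flow identity $\dot\Theta=-\nabla P(\Theta)$.

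The main obstacle is the interchange of summation and the quadratic remainder estimate: the quadratic remainder arising from Lemma \ref{L3.4} is a double sum $\sum_{i,j}\kappa_{ij}(h_i-h_j)^2$, and controlling it by $\|H\|_2^2$ crucially relies on the symmetry of $K$ together with the row-summability $\|K\|_{\infty,1}\le\|K\|_{1,1}<\infty$; without symmetry the bound $\sum_{i,j}\kappa_{ij}h_j^2=\sum_j h_j^2\sum_i\kappa_{ij}$ would require a column-sum hypothesis. Once this remainder bound is in hand the remaining manipulations are routine, but Fubini and the rearrangement $i\leftrightarrow j$ in the formal linear part must be justified by absolute convergence, which again follows from $K\in\ell^{1,1}$ and the boundedness of $\sin$.
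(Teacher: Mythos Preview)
Your argument is essentially the same as the paper's: both compute $P(\Theta+H)-P(\Theta)$ via the product-to-sum identity, invoke Lemma~\ref{L3.4} to isolate a quadratic remainder, bound that remainder by a constant times $\|H\|_2^2$, and then identify the linear part with the inner product against $-\dot\Theta$. One small slip: your neighborhood condition $\|H\|_\infty<1$ only gives $|\beta_{ij}|=|h_i-h_j|<2$, whereas Lemma~\ref{L3.4} requires $|h|<1$; the paper takes $\|H\|_2<1/\sqrt{2}$ (so $|h_i-h_j|\le\sqrt{2}\|H\|_2<1$), and you should tighten your condition accordingly.
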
 
\begin{proof}
	Let $\boldsymbol{h}=\left\{ h_{k}\right\} _{k\in\mathbb{N}}\in\ell^{2}$
	with $\left\Vert \boldsymbol{h}\right\Vert <\frac{1}{\sqrt{2}}$ and
	\[
	\Phi=\left\{ -\sum_{j\in\mathbb{N}}\kappa_{kj}\sin\left(\theta_{j}-\theta_{k}\right)\right\} _{k\in\mathbb{N}}\in\ell^{2}.
	\]
	From direct calculation, we have 
	\begin{align*}
		\begin{aligned} & P\left(\Theta+\boldsymbol{h}\right)-P\left(\Theta\right)-\left\langle \Phi,\boldsymbol{h}\right\rangle _{2}\\
			& \hspace{1cm}=-\frac{1}{2}\sum_{i,j\in\mathbb{N}}\kappa_{ij}\left(\cos\left(\theta_{i}-\theta_{j}+h_{i}-h_{j}\right)\right)\\
			& \hspace{1cm}+\frac{1}{2}\sum_{i,j\in\mathbb{N}}\kappa_{ij}\left(\cos\left(\theta_{i}-\theta_{j}\right)\right)+\sum_{i,j\in\mathbb{N}}\kappa_{ij}h_{i}\sin\left(\theta_{j}-\theta_{i}\right)\\
			& \hspace{1cm}=-\frac{1}{2}\sum_{i,j\in\mathbb{N}}\kappa_{ij}\left(\cos\left(\theta_{i}-\theta_{j}+h_{i}-h_{j}\right)-\cos\left(\theta_{i}-\theta_{j}\right)\right)+\sum_{i,j\in\mathbb{N}}\kappa_{ij}h_{i}\sin\left(\theta_{j}-\theta_{i}\right)\\
			& \hspace{1cm}\overset{i\leftrightarrow j}{=}-\frac{1}{2}\sum_{i,j\in\mathbb{N}}\kappa_{ij}\left(\cos\left(\theta_{i}-\theta_{j}+h_{i}-h_{j}\right)-\cos\left(\theta_{i}-\theta_{j}\right)+\left(h_{i}-h_{j}\right)\sin\left(\theta_{i}-\theta_{j}\right)\right)\\
			& \hspace{1cm}=-\frac{1}{2}\sum_{i,j\in\mathbb{N}}\kappa_{ij}\left({-2\sin\left(\theta_{i}-\theta_{j}+\frac{h_{i}-h_{j}}{2}\right)\sin\left(\frac{h_{i}-h_{j}}{2}\right)}+\left(h_{i}-h_{j}\right)\sin\left(\theta_{i}-\theta_{j}\right)\right).
		\end{aligned}
	\end{align*}
	Here, Lemma \ref{L3.4} implies 
	\begin{align*}
		{\begin{aligned} & \left|P\left(\Theta+\boldsymbol{h}\right)-P\left(\Theta\right)-\left\langle \Phi,\boldsymbol{h}\right\rangle _{2}\right|\leq\frac{1}{2}\sum_{i,j\in\mathbb{N}}\kappa_{ij}\left|h_{i}-h_{j}\right|^{2},\\
				& \frac{1}{2}\sum_{i,j\in\mathbb{N}}\kappa_{ij}\left|h_{i}-h_{j}\right|^{2}\leq\sum_{i\neq j}\kappa_{ij}\left(h_{i}^{2}+h_{j}^{2}\right)\leq\sum_{i,j\in\mathbb{N}}\kappa_{ij}\left\Vert \boldsymbol{h}\right\Vert _{2}^{2}.
		\end{aligned}}
	\end{align*}
	where we used 
	\[
	\left\Vert \boldsymbol{h}\right\Vert <\frac{1}{\sqrt{2}}\quad\Longrightarrow\quad|h_{i}-h_{j}|\leq1,
	\]
	to meet the assumption in Lemma \ref{L3.4}. Therefore, we conclude
	$dP(\Theta)=\left\langle \Phi,\cdot\right\rangle _{2}$, which is
	equivalent to say that $\Phi$ is the gradient of $P$. 
\end{proof}

Even if we have a gradient structure of \eqref{C-1}, we cannot show the convergence of phases as in Proposition \ref{P2.1}, as the Lojaciewicz inequality in Lemma \ref{LA-4} is  not applicable under the condition \eqref{C-2-1}. To see this, we first assume
\[  \kappa_{ii}=0, \quad i\in\mathbb{N} \]
without loss of generality. Then for each $i \in {\mathbb N}$, we have
\[ \nabla_{\theta_i}P(\Theta) = -\sum_{k\in\mathbb{N}} \kappa_{ik}\sin\left(\theta_{k}-\theta_{i}\right), \]
and the hessian matrix $\nabla^2P(\Theta)=:\left\{ h_{ij}\right\} _{i,j\in\mathbb{N}}$  is given as 
\[ h_{ij}=\frac{\partial}{\partial\theta_j} \Big( \sum_{k\in\mathbb{N}} \kappa_{ik}\sin(\theta_{k}-\theta_{i} ) \Big) = \begin{cases}
	\displaystyle \kappa_{ij}\cos\left(\theta_j-\theta_{i}\right), & i\neq j\\
	\displaystyle -\sum_{k\neq i} \kappa_{ik}\cos\left(\theta_{k}-\theta_{i}\right), & i=j.
\end{cases}  \]
In particular, the hessian matrix at $\Theta=0$ is 
\[  H:=\nabla^2P(\Theta)(0) =  \begin{cases}
	\kappa_{ij} & i\neq j,\\
	-\sum_{k\neq i}\kappa_{ik} & i=j.
\end{cases} \]
Next, we determine the kernel of this Hessian matrix.  Suppose we have two vectors
\[ 
\boldsymbol{v}=\left\{ v_{i}\right\} _{i\in\mathbb{N}}, \quad \boldsymbol{w}=\left\{ w_{i}\right\} _{i\in\mathbb{N}}\in \ell^2.
\]
By direct computation, one has
\[
\left\langle \boldsymbol{w},H\boldsymbol{v}\right\rangle _{l^{2}} =\sum_{i\in\mathbb{N}} w_{i}\left(\sum_{j\in\mathbb{N}} \kappa_{ij}v_{j}-\sum_{j\in\mathbb{N}} \kappa_{ij}v_{i}\right)  \overset{i\leftrightarrow j}{=}-\frac{1}{2}\sum_{i,j\in\mathbb{N}} \kappa_{ij}\left(v_{i}-v_{j}\right)\left(w_{i}-w_{j}\right).
\]
Therefore, we have
\begin{align*}
	\begin{aligned}
		\boldsymbol{v}=\left\{ v_{i}\right\} _{i\in\mathbb{N}}\in\ker H  &\quad \Longleftrightarrow \quad \left \langle \boldsymbol{v},H\boldsymbol{v}\right\rangle _{l^{2}} =-\frac{1}{2}\sum_{i,j\in\mathbb{N}}\kappa_{ij}\left(v_{i}-v_{j}\right)^{2}=0 \\
		& \quad \Longleftrightarrow \quad v_{i}=v_{j} \quad \forall~i,j\in\mathbb{N}, \quad \mbox{i.e.,} \quad \boldsymbol{v} = (v, v, \ldots) \in \ell^2\\
		& \quad \Longleftrightarrow  \quad \boldsymbol{v}=0.
	\end{aligned}
\end{align*}
Now, let $\boldsymbol{e}_{i}$ be a infinite sequence such that all but $i$th element is zero, and the only nonzero element is $1$. Then, we have 
\[
(H\boldsymbol{e}_{i})_j=\begin{cases}
	\kappa_{ij} & j\neq i\\
	-\sum_{k\neq i} \kappa_{ik} & j=i
\end{cases}.
\]
Therefore, the $\ell^2$-norm of $H\boldsymbol{e}_{i}$ can be written as 
\[
\left\Vert H\boldsymbol{e}_{i}\right\Vert _{l^{2}}^2=\sum_{j\in\mathbb{N}} \left(\kappa_{ij}\right)^{2}+\left(\sum_{j\in\mathbb{N}}\kappa_{ij}\right)^{2}.
\]
However, since  $K\in\ell^{1,1}$, we can have
\[ \lim_{i\to\infty}\sum_{j\in\mathbb{N}} \kappa_{ij}=0 \quad \Longrightarrow \quad \lim_{n\to\infty}\left\Vert H\boldsymbol{e}_{n}\right\Vert _{l^{2}}=0, \]
which violates the second condition of \eqref{New-A}.
%
%

\section{Emergent dynamics of a heterogeneous ensemble} \label{sec:4}
\setcounter{equation}{0}
In this section, we study emergent dynamics of the infinite Kuramoto ensemble which might have several heterogeneous oscillators with nonidentical natural frequencies,
\begin{equation}\label{D-1}
	\begin{cases}
		\displaystyle\dot{\theta}_{i}=\nu_i + \sum_{j\in \mathbb{N}}\kappa_{ij}\sin\left(\theta_{j}-\theta_{i}\right), \quad t>0,\\
		\displaystyle\theta_i(0)=\theta_i^{\text{in}}\in \mathbb{R},\quad \forall~ i\in \mathbb{N},\\
		\displaystyle \Theta^{\text{in}}\in\ell^\infty,\quad \mathcal{V}\in\ell^\infty,\quad K=(\kappa_{ij})\in\ell^{\infty,1}.
	\end{cases}
\end{equation}
In addition to the framework $(\mathcal{F}1)$ -- $(\mathcal{F}2)$, we also consider the following framework in this section:
\vspace{0.3cm}
\begin{itemize}
	\item $(\mathcal{F}3)$: The network topology $K=(\kappa_{ij})$ satisfies 
	\[\inf_{i\in\mathbb{N}}\sum_{j\in\mathbb{N}} \kappa_{ij} =: \| K \|_{-\infty, 1}>0.\]
\end{itemize}

In the sequel, we first prove the existence of a trapping set for infinite heterogeneous ensembles, and then we apply the same argument to prove the convergence of diameter to zero in a homogeneous ensemble as corollary. 

\begin{lemma}\label{L4.1}
	Let $a, b$ and $c$ be positive constants satisfying the relations
	\[ 0\leq c-a\leq \pi-\varepsilon_1, \quad  a-\varepsilon_2\leq b\leq c+\varepsilon_2,\quad 0\leq \varepsilon_{2}\leq \varepsilon_1. \]
	Then, one has 
	\[
	\sin\left(c-a\right)+\sin\left(a-b\right)+\sin\left(b-c\right)\le 4\sin \frac{\varepsilon_{2}}{2}.
	\]
\end{lemma}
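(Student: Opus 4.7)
The plan is to exploit the constraint that the three arguments sum to zero. Setting $u := c-a$, $v := a-b$, $w := b-c$, the hypotheses translate to
\[
u \ge 0, \qquad u \le \pi - \varepsilon_1, \qquad v \le \varepsilon_2, \qquad w \le \varepsilon_2, \qquad u + v + w = 0,
\]
and the target inequality becomes $\sin u + \sin v + \sin w \le 4\sin(\varepsilon_2/2)$.

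The main tool will be the classical trigonometric identity valid whenever $u+v+w=0$,
\[
\sin u + \sin v + \sin w = -4 \sin\frac{u}{2} \sin\frac{v}{2} \sin\frac{w}{2},
\]
which I would derive quickly via the sum-to-product formula $\sin v + \sin w = 2\sin\frac{v+w}{2}\cos\frac{v-w}{2}$ combined with $v+w=-u$ and a second application of sum-to-product to $\cos\frac{u}{2} - \cos\frac{v-w}{2}$. Once the identity is in hand, the problem reduces to bounding the product $\sin(u/2)\sin(v/2)\sin(w/2)$ from below.

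Because $u \ge 0$ and $v + w = -u \le 0$, at most one of $v, w$ can be strictly positive, so I would split into three cases. If both $v \le 0$ and $w \le 0$, then $\sin(v/2)\sin(w/2) \ge 0$ while $\sin(u/2)\ge 0$, so the right-hand side of the identity is non-positive and the claim is trivial. If exactly one of $v, w$ is positive — say $v \in (0,\varepsilon_2]$ and $w \le 0$ — then the ranges $u/2 \in [0, (\pi-\varepsilon_1)/2)$ and $|w|/2 = (u+v)/2 \le (\pi-\varepsilon_1+\varepsilon_2)/2 \le \pi/2$ (using $\varepsilon_2 \le \varepsilon_1$) keep $\sin(u/2)$ and $\sin(|w|/2)$ inside $[0,1]$, while $v/2 \in (0,\varepsilon_2/2]$ lies in a region where sine is increasing, giving $\sin(v/2) \le \sin(\varepsilon_2/2)$. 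Combining these three bounds yields
\[
-4\sin\frac{u}{2}\sin\frac{v}{2}\sin\frac{w}{2} = 4\sin\frac{u}{2}\sin\frac{v}{2}\sin\frac{|w|}{2} \le 4\sin\frac{\varepsilon_2}{2}.
\]
The remaining case $v \le 0 < w$ is symmetric.

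No step is really an obstacle; the only point requiring a moment of care is verifying that $|w|/2$ and $u/2$ stay in $[0,\pi/2]$ so that the crude estimate $\sin(\cdot) \le 1$ is not wasteful, and this is precisely where the hypothesis $\varepsilon_2 \le \varepsilon_1$ is needed.
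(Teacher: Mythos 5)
Your proposal is correct and follows essentially the same route as the paper: both rewrite the sum as the product $-4\sin\frac{c-a}{2}\sin\frac{c-b}{2}\sin\frac{b-a}{2}$ via sum-to-product identities and then split into the case $a\le b\le c$ (where the expression is nonpositive) and the overshoot cases $b\in[a-\varepsilon_2,a)$ or $b\in(c,c+\varepsilon_2]$, bounding the small factor by $\sin\frac{\varepsilon_2}{2}$ and the other two by $1$ using $\varepsilon_2\le\varepsilon_1$. No substantive difference from the paper's argument.
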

\begin{proof} 
	\noindent We use the additive law for trigonometric function to obtain
	\begin{align*}
		& \sin\left(c-a\right)+\sin\left(a-b\right)+\sin\left(b-c\right)\\
		& \hspace{1cm} =\sin\left(c-a\right)+2\sin\left(\frac{a-c}{2}\right)\cos\left(\frac{a-2b+c}{2}\right)\\
		&  \hspace{1cm}  =2\sin\left(\frac{c-a}{2}\right)\left(\cos\left(\frac{a-c}{2}\right)-\cos\left(\frac{a-2b+c}{2}\right)\right)\\
		& \hspace{1cm}  =-4\sin\left(\frac{c-a}{2}\right)\sin\left(\frac{c-b}{2}\right)\sin\left(\frac{b-a}{2}\right)\\
		&\hspace{1cm} =:f(a,b,c).
	\end{align*}
	If $a\leq b\leq c$, then 
	\[ \frac{c-a}{2}, \frac{b-a}{2},\frac{c-b}{2}\in \Big [0,\frac{\pi-\varepsilon_1}{2} \Big ] \]
	and therefore 
	\[ f(a,b,c)\leq 0. \]
	On the other hand, if $a-\varepsilon_2\leq b\leq a$ or $c\leq b\leq c+\varepsilon_2$, then we have
	\[ \frac{c-a}{2}\in [0,\frac{\pi-\varepsilon_1}{2}], \]
	and one of $\frac{b-a}{2}$ and $\frac{c-b}{2}$ is contained in $[0,\frac{\pi-\varepsilon_{1}+\varepsilon_{2}}{2}]$, and the other is contained in $[-\frac{\varepsilon_2}{2},0]$. Therefore, we have 
	\[ f(a,b,c)\leq 4\sin \frac{\varepsilon_{2}}{2}  \quad \mbox{in both cases}. \]
\end{proof}

\begin{lemma}\label{L4.2p}
	Let $\Theta$ be a solution to \eqref{D-1}. For every $t\geq 0$ and $\varepsilon_{2}>0$, consider the following partition of the index set $\mathbb{N}$: 
	\begin{align*}
		\begin{aligned}
			& {\mathcal J}_{1}(\varepsilon_{2},t): =\left\{ k:\theta_{k}\left(t\right)>\overline{\theta}(t)-\varepsilon_{2}\right\}, \\
			& {\mathcal J}_{2}(\varepsilon_{2},t):=\left\{ k:\theta_{k}\left(t\right)<\underline{\theta}(t)+\varepsilon_{2}\right\}, \\
			& {\mathcal J}_{3}(\varepsilon_{2},t):  =\left\{ k:\underline{\theta}(t)+\varepsilon_{2}\le\theta_{k}\left(t\right)\le\overline{\theta}(t)-\varepsilon_{2}\right\}.
		\end{aligned}
	\end{align*}
	Then, if $\mathcal{D}(\Theta(t_0))=\overline{\theta}(t_0)-\underline{\theta}(t_0)= \pi-\varepsilon_1$ for some $\varepsilon_1>0$, we have 
	\begin{equation}\label{D-2}
		\begin{aligned}
			&\dot{\theta}_i(t_0)-\dot{\theta}_j(t_0) \\
			&\hspace{0.2cm} \leq \nu_{i}-\nu_{j} -\sum_{k\in\mathbb{N}}\bigg[\emph{min}(\kappa_{ik},\kappa_{jk})\left[\sin(\theta_i(t_0)-\theta_j(t_0))-4\sin\frac{\varepsilon_{2}}{2} \right] -|\kappa_{ik}-\kappa_{jk}|\sin\varepsilon_{2}\bigg],
		\end{aligned}
	\end{equation}
	for every  $(i,j)\in\mathcal{J}_1(\varepsilon_{2},t_0)\times \mathcal{J}_2(\varepsilon_{2},t_0)$ and sufficiently small $\varepsilon_{2}$ satisfying 
	\begin{equation}\label{D-3p}
		\varepsilon_{2}\leq \varepsilon_{1},\quad \varepsilon_{1}+2\varepsilon_{2}\leq \pi,\quad \sin \varepsilon_{1}\geq 4\sin\frac{\varepsilon_{2}}{2},\quad \sin (\varepsilon_{1}+2\varepsilon_{2})\geq 4\sin\frac{\varepsilon_{2}}{2}.
	\end{equation}
\end{lemma}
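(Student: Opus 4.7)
The plan is to write out $\dot\theta_i(t_0)-\dot\theta_j(t_0)$ directly from \eqref{D-1}, and then estimate each term of the resulting series by isolating a symmetric part in the coupling weights. Setting $m_k:=\min(\kappa_{ik},\kappa_{jk})$, I would decompose
\[
\kappa_{ik}\sin(\theta_k-\theta_i)-\kappa_{jk}\sin(\theta_k-\theta_j)
=m_k\bigl[\sin(\theta_k-\theta_i)-\sin(\theta_k-\theta_j)\bigr]+(\kappa_{ik}-m_k)\sin(\theta_k-\theta_i)-(\kappa_{jk}-m_k)\sin(\theta_k-\theta_j),
\]
so that the two pieces can be bounded by entirely different mechanisms.

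For the symmetric piece I would invoke Lemma \ref{L4.1} with $a=\theta_j(t_0)$, $b=\theta_k(t_0)$, $c=\theta_i(t_0)$. Its three hypotheses become: (i) $0\le \theta_i-\theta_j\le \pi-\varepsilon_1$, whose upper bound is $\mathcal{D}(\Theta(t_0))=\pi-\varepsilon_1$ and whose lower bound follows from $\theta_i\ge\overline{\theta}-\varepsilon_2$, $\theta_j\le\underline{\theta}+\varepsilon_2$, combined with $\varepsilon_1+2\varepsilon_2\le\pi$ in \eqref{D-3p}; (ii) $\theta_j-\varepsilon_2\le\theta_k\le\theta_i+\varepsilon_2$, which is immediate from $\underline{\theta}\le\theta_k\le\overline{\theta}$ and the definitions of $\mathcal{J}_1,\mathcal{J}_2$; and (iii) $\varepsilon_2\le\varepsilon_1$, which is \eqref{D-3p} as well. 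The conclusion of Lemma \ref{L4.1} rewrites as $\sin(\theta_k-\theta_i)-\sin(\theta_k-\theta_j)\le 4\sin\tfrac{\varepsilon_2}{2}-\sin(\theta_i-\theta_j)$, so multiplying by $m_k\ge 0$ gives the $\min(\kappa_{ik},\kappa_{jk})$-term in \eqref{D-2}.

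For the residual piece, note that at most one of $\kappa_{ik}-m_k$ and $\kappa_{jk}-m_k$ is nonzero, and their sum equals $|\kappa_{ik}-\kappa_{jk}|$. I would then check that
\[
\sin(\theta_k-\theta_i)\le \sin\varepsilon_2, \qquad -\sin(\theta_k-\theta_j)\le\sin\varepsilon_2.
\]
For the first, $\theta_k-\theta_i\in[-(\pi-\varepsilon_1),\varepsilon_2]$; conditions \eqref{D-3p} imply $\varepsilon_2<\pi/2$, and a case analysis splitting at $-\pi/2$ shows the sine attains its maximum on this interval at the right endpoint. The second inequality is symmetric: $\theta_k-\theta_j\in[-\varepsilon_2,\pi-\varepsilon_1]$, and the sine is $\ge-\sin\varepsilon_2$ on this interval. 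Using $\kappa_{ik}-m_k,\kappa_{jk}-m_k\ge 0$, the residual contributes at most $|\kappa_{ik}-\kappa_{jk}|\sin\varepsilon_2$ per index.

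Summing over $k\in\mathbb{N}$ (which is legitimate since $K\in\ell^{\infty,1}$ so all series converge absolutely, uniformly in $i$) and adding $\nu_i-\nu_j$ produces exactly \eqref{D-2}. The main subtlety is to keep track of signs when combining the two bounds: one has to verify that the residual estimate is a genuine upper bound regardless of which of $\kappa_{ik}$ or $\kappa_{jk}$ is larger, and that the sine bounds on the residual remain $\le\sin\varepsilon_2$ on the full range reached by $\theta_k$ (not just the partition set $\mathcal{J}_3$). The assumptions $\sin\varepsilon_1\ge 4\sin\tfrac{\varepsilon_2}{2}$ and $\sin(\varepsilon_1+2\varepsilon_2)\ge 4\sin\tfrac{\varepsilon_2}{2}$ in \eqref{D-3p} are not needed in this lemma per se, but they guarantee that the bracket $\sin(\theta_i-\theta_j)-4\sin\tfrac{\varepsilon_2}{2}$ on the right is nonnegative, making \eqref{D-2} a useful contraction estimate for subsequent use.
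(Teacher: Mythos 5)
Your proof is correct and reaches \eqref{D-2}, but it organizes the termwise estimate differently from the paper. The paper also reduces to a per-$k$ inequality and invokes Lemma \ref{L4.1} with $(a,b,c)=(\theta_j,\theta_k,\theta_i)$, but it then splits into three cases according to whether $k\in\mathcal{J}_1$, $\mathcal{J}_2$ or $\mathcal{J}_3$: for $k\in\mathcal{J}_1$ (resp.\ $\mathcal{J}_2$) it factors out $\kappa_{jk}$ (resp.\ $\kappa_{ik}$) rather than the minimum, controls the leftover $(\kappa_{ik}-\kappa_{jk})$-term by $|\kappa_{ik}-\kappa_{jk}|\sin\varepsilon_2$ using that $|\theta_i-\theta_k|<\varepsilon_2$ (resp.\ $|\theta_k-\theta_j|<\varepsilon_2$) on that set, and only afterwards replaces $\kappa_{jk}$ (resp.\ $\kappa_{ik}$) by $\min(\kappa_{ik},\kappa_{jk})$ --- a step that requires the bracket $\sin(\theta_i-\theta_j)-4\sin\frac{\varepsilon_2}{2}$ to be nonnegative, which is precisely where the last two conditions of \eqref{D-3p} enter the paper's argument. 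Your uniform decomposition with $m_k=\min(\kappa_{ik},\kappa_{jk})$ avoids the case split: the symmetric part is handled by Lemma \ref{L4.1} exactly as in the paper (your verification of its hypotheses for all $k$, using $\varepsilon_1+2\varepsilon_2\le\pi$ and $\varepsilon_2\le\varepsilon_1$, matches the paper's), while the residual only needs the one-sided bounds $\sin(\theta_k-\theta_i)\le\sin\varepsilon_2$ and $\sin(\theta_k-\theta_j)\ge-\sin\varepsilon_2$, valid for every $k$ since \eqref{D-3p} forces $\varepsilon_2\le\pi/3$. As a consequence your observation is accurate: under your route the two sine conditions in \eqref{D-3p} are not used inside the lemma (they are still needed downstream, e.g.\ in Theorem \ref{T4.1}, to make the bracket nonnegative), whereas the paper's proof does use them here. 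Both arguments rest on the same ingredients (Lemma \ref{L4.1} and the $\ell^{\infty,1}$ summability of $K$), so the difference is organizational, but your version is marginally cleaner and isolates more sharply which hypotheses the inequality actually requires.
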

\begin{proof}
	One can apply Lemma \ref{L4.1} to 
	\[a=\theta_j(t_0),\quad b=\theta_k(t_0),\quad c=\theta_i(t_0), \]
	for all $i\in\mathcal{J}_1(\varepsilon_{2},t_0), j\in \mathcal{J}_2(\varepsilon_{2},t_0)$ and $k\in\mathbb{N}$ whenever $\varepsilon_{2}\leq \varepsilon_{1}$. Since $\dot{\theta}_i(t_0)-\dot{\theta}_j(t_0)$ can be written as
	\[\dot{\theta}_i(t_0)-\dot{\theta}_j(t_0)=\nu_{i}-\nu_{j}-\sum_{k\in\mathbb{N}} \left(\kappa_{ik}\sin\left(\theta_{i}\left(t_{0}\right)-\theta_{k}\left(t_{0}\right)\right)-\kappa_{jk}\sin\left(\theta_{k}\left(t_{0}\right)-\theta_j\left(t_{0}\right)\right)\right),\]	
	it is sufficient to verify that 
	\[\begin{aligned}
		&\kappa_{ik}\sin\left(\theta_{i}\left(t_{0}\right)-\theta_{k}\left(t_{0}\right)\right)+\kappa_{jk}\sin\left(\theta_{k}\left(t_{0}\right)-\theta_j\left(t_{0}\right)\right)\\
		&\hspace{0.5cm}\geq \mbox{min}(\kappa_{ik},\kappa_{jk})\left[\sin(\theta_i(t_0)-\theta_j(t_0))-4\sin\frac{\varepsilon_{2}}{2} \right]-|\kappa_{ik}-\kappa_{jk}|\sin\varepsilon_{2}
	\end{aligned} \]
	for all $k\in\mathbb{N}$. Note that for sufficiently small $\varepsilon_{2}$ satisfying \eqref{D-3p},  we have 
	\[\begin{aligned}
		&0\leq \mathcal{D}(\Theta(t_0))-2\varepsilon_{2}\leq \theta_i(t_0)-\theta_j(t_0)\leq \mathcal{D}(\Theta(t_0))\leq \pi,\\
		&\sin \mathcal{D}(\Theta(t_0))\geq 4\sin\frac{\varepsilon_{2}}{2},\quad \sin (\mathcal{D}(\Theta(t_0))-2\varepsilon_{2})\geq 4\sin\frac{\varepsilon_{2}}{2}.
	\end{aligned} \]
	These imply
	\[\sin(\theta_i(t_0)-\theta_j(t_0))-4\sin\frac{\varepsilon_{2}}{2}\geq 0 \]
	from the concavity of the sine function on the domain $[0,\pi]$.
	Below, we show the above inequality for $k\in \mathcal{J}_1(\varepsilon_{2},t_0)$, $k\in \mathcal{J}_2(\varepsilon_{2},t_0)$ and $k\in \mathcal{J}_3(\varepsilon_{2},t_0)$ one by one.\\

	\noindent $\bullet$~Case A ($k\in \mathcal{J}_1(\varepsilon_{2},t_0)$): In this case, we use Lemma \ref{L4.1} to get
	\begin{align*}
		&\kappa_{ik}\sin\left(\theta_{i}\left(t_{0}\right)-\theta_{k}\left(t_{0}\right)\right)+\kappa_{jk}\sin\left(\theta_{k}\left(t_{0}\right)-\theta_j\left(t_{0}\right)\right)\\
		&\hspace{0.5cm}= \kappa_{jk}\left[\sin\left(\theta_{i}\left(t_{0}\right)-\theta_{k}\left(t_{0}\right)\right)+\sin\left(\theta_{k}\left(t_{0}\right)-\theta_j\left(t_{0}\right)\right)\right]+(\kappa_{ik}-\kappa_{jk})\sin\left(\theta_{i}\left(t_{0}\right)-\theta_k\left(t_{0}\right)\right)\\
		&\hspace{0.5cm}\geq \kappa_{jk}\left[\sin(\theta_i(t_0)-\theta_j(t_0))-4\sin\frac{\varepsilon_{2}}{2} \right]-|\kappa_{ik}-\kappa_{jk}|\sin\varepsilon_{2}\\
		&\hspace{0.5cm}\geq \mbox{min}(\kappa_{ik},\kappa_{jk})\left[\sin(\theta_i(t_0)-\theta_j(t_0))-4\sin\frac{\varepsilon_{2}}{2} \right]-|\kappa_{ik}-\kappa_{jk}|\sin\varepsilon_{2}.
	\end{align*} 
	\vspace{0.5cm}
	
	\noindent $\bullet$~Case B ($k\in \mathcal{J}_2(\varepsilon_{2},t_0)$): Similar to the first case, we use Lemma \ref{L4.1} to get
	\begin{align*}
		&\kappa_{ik}\sin\left(\theta_{i}\left(t_{0}\right)-\theta_{k}\left(t_{0}\right)\right)+\kappa_{jk}\sin\left(\theta_{k}\left(t_{0}\right)-\theta_j\left(t_{0}\right)\right)\\
		&\hspace{0.5cm}= \kappa_{ik}\left[\sin\left(\theta_{i}\left(t_{0}\right)-\theta_{k}\left(t_{0}\right)\right)+\sin\left(\theta_{k}\left(t_{0}\right)-\theta_j\left(t_{0}\right)\right)\right]+(\kappa_{jk}-\kappa_{ik})\sin\left(\theta_{k}\left(t_{0}\right)-\theta_j\left(t_{0}\right)\right)\\
		&\hspace{0.5cm}\geq \kappa_{ik}\left[\sin(\theta_i(t_0)-\theta_j(t_0))-4\sin\frac{\varepsilon_{2}}{2} \right]-|\kappa_{ik}-\kappa_{jk}|\sin\varepsilon_{2}\\
		&\hspace{0.5cm}\geq \mbox{min}(\kappa_{ik},\kappa_{jk})\left[\sin(\theta_i(t_0)-\theta_j(t_0))-4\sin\frac{\varepsilon_{2}}{2} \right]-|\kappa_{ik}-\kappa_{jk}|\sin\varepsilon_{2}.
	\end{align*}
	\vspace{0.5cm}
	
	\noindent $\bullet$~Case C ($k\in \mathcal{J}_3(\varepsilon_{2},t_0)$): Again, in this case, we use Lemma \ref{L4.1} to get
	\begin{align*}
		&\kappa_{ik}\sin\left(\theta_{i}\left(t_{0}\right)-\theta_{k}\left(t_{0}\right)\right)+\kappa_{jk}\sin\left(\theta_{k}\left(t_{0}\right)-\theta_j\left(t_{0}\right)\right)\\
		&\hspace{0.5cm}\geq \mbox{min}(\kappa_{ik},\kappa_{jk})\left[\sin\left(\theta_{i}\left(t_{0}\right)-\theta_{k}\left(t_{0}\right)\right)+\sin\left(\theta_{k}\left(t_{0}\right)-\theta_j\left(t_{0}\right)\right)\right]\\
		&\hspace{0.5cm}\geq \mbox{min}(\kappa_{ik},\kappa_{jk})\sin(\theta_i(t_0)-\theta_j(t_0))\\
		&\hspace{0.5cm}\geq \mbox{min}(\kappa_{ik},\kappa_{jk})\left[\sin(\theta_i(t_0)-\theta_j(t_0))-4\sin\frac{\varepsilon_{2}}{2} \right]-|\kappa_{ik}-\kappa_{jk}|\sin\varepsilon_{2}.
	\end{align*}
	Finally, we combine estimates in Case A $\sim$ Case C to obtain
	\begin{align*}
		\dot{\theta}_i(t_0)-\dot{\theta}_j(t_0)&=\nu_{i}-\nu_{j}-\sum_{k\in\mathbb{N}} \left(\kappa_{ik}\sin\left(\theta_{i}\left(t_{0}\right)-\theta_{k}\left(t_{0}\right)\right)-\kappa_{jk}\sin\left(\theta_{k}\left(t_{0}\right)-\theta_j\left(t_{0}\right)\right)\right)\\
		&\leq \nu_{i}-\nu_{j}-\sum_{k\in\mathbb{N}}\bigg[\mbox{min}(\kappa_{ik},\kappa_{jk})\left[\sin(\theta_i(t_0)-\theta_j(t_0))-4\sin\frac{\varepsilon_{2}}{2} \right]\\
		&\hspace{1.65cm}-|\kappa_{ik}-\kappa_{jk}|\sin\varepsilon_{2}\bigg]
	\end{align*}
	for every $\varepsilon_{2}\leq \varepsilon_{1}$ and $(i,j)\in\mathcal{J}_1(\varepsilon_{2},t_0)\times \mathcal{J}_2(\varepsilon_{2},t_0)$,	which is our desired result.
\end{proof}

If $\Theta$ is a solution to \eqref{D-1} under the framework $(\mathcal{F}1)-(\mathcal{F}3)$, one can further estimate the right-hand side of \eqref{D-2} to obtain the following result, so-called `practical synchronization'.
\begin{theorem}\label{T4.1}
	Assume that the initial data $\Theta^{\text{in}}$ and network topology $(\kappa_{ij})$ satisfy $(\mathcal{F}1)$ -- $(\mathcal{F}3)$. Assume further that
	\[0<\mathcal{D}(\mathcal{V})<\|\boldsymbol{\tilde \kappa}\|_1\|K\|_{-\infty,1},\quad \mathcal{D}(\Theta^{\text{in}})\in (\gamma,\pi-\gamma),\quad \gamma=\sin^{-1}\left(\frac{\mathcal{D}(\mathcal{V})}{\|\boldsymbol{\tilde \kappa}\|_1\|K\|_{-\infty,1}} \right)<\frac{\pi}{2}. \]
	Then, if $\Theta$ is an $\ell^\infty$-solution to \eqref{D-1}, the phase diameter $\mathcal{D}(\Theta(t))$ has the following asymptotic upper bound:
	\[\limsup_{t\to\infty} \mathcal{D}(\Theta(t))\leq \gamma. \]
\end{theorem}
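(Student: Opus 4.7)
The plan is to distill Lemma \ref{L4.2p} into a scalar differential inequality for the phase diameter $\mathcal{D}(\Theta(t))$ and then read off the claim by comparison with the associated autonomous ODE.

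First, I would use the frameworks $(\mathcal{F}2)$ and $(\mathcal{F}3)$ to uniformly control the coupling sums that appear in Lemma \ref{L4.2p}. Combining $(\mathcal{F}2)$ with $(\mathcal{F}3)$ yields, for every $i,k\in\mathbb{N}$,
\[
\kappa_{ik} > \tilde\kappa_k \sum_{m\in\mathbb{N}} \kappa_{im} \geq \tilde\kappa_k \|K\|_{-\infty,1},
\]
so $\sum_{k\in\mathbb{N}} \min(\kappa_{ik},\kappa_{jk}) \geq \|\boldsymbol{\tilde\kappa}\|_1 \|K\|_{-\infty,1}$ for all $i,j$, while $\sum_{k\in\mathbb{N}} |\kappa_{ik} - \kappa_{jk}| \leq 2\|K\|_{\infty,1}$ controls the error term. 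Inserting these into \eqref{D-2} and using the concavity of $\sin$ on $[0,\pi]$ to absorb the loss from $\theta_i(t_0) - \theta_j(t_0) \in [\mathcal{D}(\Theta(t_0)) - 2\varepsilon_{2}, \mathcal{D}(\Theta(t_0))]$, I obtain, for every $t_0$ with $\mathcal{D}(\Theta(t_0)) \in (0,\pi)$ and every $(i,j) \in \mathcal{J}_1(\varepsilon_{2},t_0) \times \mathcal{J}_2(\varepsilon_{2},t_0)$,
\[
\dot\theta_i(t_0) - \dot\theta_j(t_0) \leq \mathcal{D}(\mathcal{V}) - \|\boldsymbol{\tilde\kappa}\|_1 \|K\|_{-\infty,1} \sin\bigl(\mathcal{D}(\Theta(t_0))\bigr) + R(\varepsilon_{2}),
\]
with $R(\varepsilon_{2}) \to 0$ as $\varepsilon_{2} \to 0^+$.

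Next, I would upgrade this pairwise estimate to an upper Dini derivative bound on $\mathcal{D}(\Theta(\cdot))$. Since each $\dot\theta_k$ is uniformly bounded by Lemma \ref{L2.2}, any pair $(i,j)$ outside $\mathcal{J}_1(\varepsilon_{2},t_0) \times \mathcal{J}_2(\varepsilon_{2},t_0)$ already satisfies $\theta_i(t_0) - \theta_j(t_0) \leq \mathcal{D}(\Theta(t_0)) - \varepsilon_{2}$, so for $h$ sufficiently small relative to $\varepsilon_{2}$ those pairs contribute at most $\mathcal{D}(\Theta(t_0)) - \varepsilon_{2}/2$ to $\mathcal{D}(\Theta(t_0+h))$ and are dominated by the near-extremal pairs. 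Integrating the pairwise estimate along pairs in $\mathcal{J}_1 \times \mathcal{J}_2$, taking the supremum, dividing by $h$, and letting first $h \downarrow 0$ and then $\varepsilon_{2} \downarrow 0$ produce
\[
D^{+} \mathcal{D}(\Theta(t)) \leq \mathcal{D}(\mathcal{V}) - \|\boldsymbol{\tilde\kappa}\|_1 \|K\|_{-\infty,1} \sin\bigl(\mathcal{D}(\Theta(t))\bigr)
\]
at every $t \geq 0$ with $\mathcal{D}(\Theta(t)) \in (0,\pi)$.

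Finally, set $f(y) := \mathcal{D}(\mathcal{V}) - \|\boldsymbol{\tilde\kappa}\|_1 \|K\|_{-\infty,1} \sin y$. Under the standing hypotheses one has $f(\gamma) = 0$, $f < 0$ on $(\gamma,\pi-\gamma)$, and $f > 0$ on $[0,\gamma)$, so $\gamma$ is a globally attracting equilibrium of $\dot y = f(y)$ on $[0,\pi-\gamma]$. Since $\mathcal{D}(\Theta^{\text{in}}) \in (\gamma,\pi-\gamma)$, the Dini inequality forces $\mathcal{D}(\Theta(t))$ to remain strictly below $\mathcal{D}(\Theta^{\text{in}})$ for all $t > 0$; in particular it never approaches $\pi$, so the Dini inequality stays in force. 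A scalar comparison with $\dot y = f(y)$, $y(0) = \mathcal{D}(\Theta^{\text{in}})$, whose unique solution decreases monotonically to $\gamma$, gives $\mathcal{D}(\Theta(t)) \leq y(t)$ and therefore $\limsup_{t\to\infty} \mathcal{D}(\Theta(t)) \leq \gamma$. The main technical hurdle is the second step: the rigorous passage from the pairwise estimate in Lemma \ref{L4.2p} to a Dini inequality for the supremum-valued functional $\mathcal{D}(\Theta(\cdot))$, which need not be attained by any specific pair. The remedy is the Lipschitz continuity of each $\theta_i$ from Lemma \ref{L2.2} together with the openness in time of the strict inequality $\theta_i - \theta_j < \mathcal{D}(\Theta(t_0)) - \varepsilon_{2}$, which confines the near-maximal pairs to $\mathcal{J}_1(2\varepsilon_{2},t_0) \times \mathcal{J}_2(2\varepsilon_{2},t_0)$ uniformly on a short time interval depending only on $\varepsilon_{2}$.
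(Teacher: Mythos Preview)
Your approach is correct and uses the same raw ingredients as the paper---Lemma~\ref{L4.2p}, the lower bound $\sum_k\min(\kappa_{ik},\kappa_{jk})\ge\|\boldsymbol{\tilde\kappa}\|_1\|K\|_{-\infty,1}$ coming from $(\mathcal{F}2)$--$(\mathcal{F}3)$, and the uniform $C^2$ bound of Lemma~\ref{L2.3}---but the packaging is genuinely different. The paper keeps the auxiliary scale $\varepsilon_2$ fixed (tied to a further parameter $\varepsilon_0$), proves a \emph{linear} decrease of $\mathcal{D}(\Theta)$ over an explicit short window $[t_0,t_0+\varepsilon_2/(2(\mathcal{D}(\mathcal{V})+2\|K\|_{\infty,1}))]$, iterates these windows to reach an $\varepsilon_0$-perturbed threshold in finite time, and only lets $\varepsilon_0\to0$ at the very end. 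You instead send $\varepsilon_2\to0$ \emph{pointwise} at each $t_0$, which yields the clean Dini inequality $D^+\mathcal{D}(\Theta)\le \mathcal{D}(\mathcal{V})-\|\boldsymbol{\tilde\kappa}\|_1\|K\|_{-\infty,1}\sin\mathcal{D}(\Theta)$ directly, and then a single scalar comparison with $\dot y=f(y)$ finishes the job. Your route is shorter and extracts more---monotone decrease of $\mathcal{D}(\Theta)$ and, if desired, an explicit rate from the ODE---at no extra cost. It is worth noting that the authors explicitly state in Section~\ref{sec:6} that ``the Dini derivative of phase diameter cannot be used'' in the infinite setting; your argument shows this caution is unnecessary once one works through the $\varepsilon_2$-layer and uses the uniform-in-$(i,j)$ Taylor remainder $O(h^2)$ furnished by Lemma~\ref{L2.3}, which is exactly what makes the passage from the pairwise estimate to the supremum-valued Dini bound rigorous even when the extremals are not attained.
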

\begin{proof}
	Fix a sufficiently small $\varepsilon_0>0$ satisfying 
	\[\sin^{-1}\left(\frac{\mathcal{D}(\mathcal{V})+\|K\|_{\infty,1}\varepsilon_0}{\|\boldsymbol{\tilde \kappa}\|_1\|K\|_{-\infty,1}} \right)+\frac{\varepsilon_0}{3} \leq\mathcal{D}(\Theta^{\text{in}})\leq \pi-\sin^{-1}\left(\frac{\mathcal{D}(\mathcal{V})+\|K\|_{\infty,1}\varepsilon_0}{\|\boldsymbol{\tilde \kappa}\|_1\|K\|_{-\infty,1}} \right),  \]
	and define a positive number $\varepsilon_{2}$ as
	\[\varepsilon_{2}=\mbox{min}\left(\frac{\sin\gamma}{2},\frac{\varepsilon_0}{6}\right). \]
	Then, we will show the following statement in the proof.\\
	
	{\it	We claim: whenever the phase diameter $\mathcal{D}(\Theta)$ at time $t=t_0$ satisfy
		\[\sin^{-1}\left(\frac{\mathcal{D}(\mathcal{V})+\|K\|_{\infty,1}\varepsilon_0}{\|\boldsymbol{\tilde \kappa}\|_1\|K\|_{-\infty,1}} \right)+\frac{\varepsilon_0}{3}\leq\mathcal{D}(\Theta(t_0))\leq \pi-\sin^{-1}\left(\frac{\mathcal{D}(\mathcal{V})+\|K\|_{\infty,1}\varepsilon_0}{\|\boldsymbol{\tilde \kappa}\|_1\|K\|_{-\infty,1}} \right),\]
		we have 
		\[\mathcal{D}(\Theta(t))\leq\mathcal{D}(\Theta(t_0))-\|K\|_{\infty,1}\varepsilon_{2}(t-t_0),\quad \forall~ t_0\leq t\leq t_0+\frac{\varepsilon_{2}}{2(\mathcal{D}(\mathcal{V})+2\|K\|_{\infty,1})}.\]}
	
	To see this, we first verify that $\varepsilon_{2}$ satisfies the condition \eqref{D-3p} for given $\varepsilon_{1}=\pi-\mathcal{D}(\Theta(t_0))$. If $\varepsilon_{2}$ is a positive number satisfying 
	\begin{equation}\label{D-5}
		{ 4\varepsilon_{2}\leq \sin\varepsilon_{1} \leq \pi-\varepsilon_{1},}
	\end{equation}
	one can easily see that $\varepsilon_{2}$ satisfies \eqref{D-3p}. However, \eqref{D-5} follows from the fact that 
	\[\begin{aligned}
		&2\varepsilon_{2}\leq \sin\gamma\leq \sin\mathcal{D}(\Theta(t_0))=\sin\varepsilon_{1},\\
		&2\varepsilon_{2}\leq \frac{\varepsilon_0}{3}\leq \sin^{-1}\left(\frac{\mathcal{D}(\mathcal{V})+\|K\|_{\infty,1}\varepsilon_0}{\|\boldsymbol{\tilde \kappa}\|_1\|K\|_{-\infty,1}} \right)+\frac{\varepsilon_0}{3}\leq\mathcal{D}(\Theta(t_0))=\pi-\varepsilon_{1}.
	\end{aligned} \]
	Now, consider any index pair $(i,j)\in\mathcal{J}_1(\varepsilon_{2},t_0)\times \mathcal{J}_2(\varepsilon_{2},t_0)$. Then, Lemma \ref{L4.2p} yields
	\begin{align*}
		\dot{\theta}_i(t_0)-\dot{\theta}_j(t_0)&\leq \nu_{i}-\nu_{j}-\sum_{k\in\mathbb{N}}\bigg[\mbox{min}(\kappa_{ik},\kappa_{jk})\left[\sin(\theta_i(t_0)-\theta_j(t_0))-4\sin\frac{\varepsilon_{2}}{2} \right]\\
		&\hspace{1.65cm}-|\kappa_{ik}-\kappa_{jk}|\sin\varepsilon_{2}\bigg]\\
		&\leq \mathcal{D}(\mathcal{V})-\left(\sum_{k\in\mathbb{N}}\tilde{\kappa}_k\right)\mbox{min}\left(\sum_{l\in\mathbb{N}}\kappa_{il},\sum_{l\in\mathbb{N}}\kappa_{jl} \right)\left[\sin(\theta_i(t_0)-\theta_j(t_0))-4\sin\frac{\varepsilon_{2}}{2} \right]\\
		&\hspace{1.65cm}+\sum_{k\in\mathbb{N}}|\kappa_{ik}-\kappa_{jk}|\sin\varepsilon_{2}\\
		&\leq \mathcal{D}(\mathcal{V})-\| {\boldsymbol{\tilde{\kappa}}} \|_1\|K\|_{-\infty,1}\left[\sin(\theta_i(t_0)-\theta_j(t_0))-4\sin\frac{\varepsilon_{2}}{2} \right]+2\|K\|_{\infty,1}\sin\varepsilon_{2}\\
		&\leq (\mathcal{D}(\mathcal{V})+8\|K\|_{\infty,1}\sin\frac{\varepsilon_{2}}{2})-\| {\boldsymbol{\tilde{\kappa}}} \|_1\|K\|_{-\infty,1}\sin(\theta_i(t_0)-\theta_j(t_0)),
	\end{align*}
	where we used
	\[\| {\boldsymbol{\tilde{\kappa}}} \|_1\leq 1,\quad \|K\|_{-\infty,1}\leq \|K\|_{\infty,1} \]
	in the last inequality. Then, by using Lemma \ref{L2.3}, we have 
	\begin{align*}
		\dot{\theta}_i(t)-\dot{\theta}_j(t)&\leq (\mathcal{D}(\mathcal{V})+8\|K\|_{\infty,1}\sin\frac{\varepsilon_{2}}{2})-\| {\boldsymbol{\tilde{\kappa}}} \|_1\|K\|_{-\infty,1}\sin(\theta_i(t_0)-\theta_j(t_0))\\
		&\hspace{0.5cm}+2(t-t_0)\|K\|_{\infty,1}(\mathcal{D}(\mathcal{V})+2\|K\|_{\infty,1} )
	\end{align*}
	for all $t\in\mathbb{R}_+$. In particular, we have 
	\begin{align*}
		\dot{\theta}_i(t)-\dot{\theta}_j(t)&\leq (\mathcal{D}(\mathcal{V})+5\|K\|_{\infty,1}\varepsilon_{2})-\| {\boldsymbol{\tilde{\kappa}}} \|_1\|K\|_{-\infty,1}\sin(\theta_i(t_0)-\theta_j(t_0))\\
		&\leq 5\|K\|_{\infty,1}\varepsilon_{2}-\|K\|_{\infty,1}\varepsilon_0\\
		&\leq -\|K\|_{\infty,1}\varepsilon_{2},
	\end{align*}
	for all $|t-t_0|\leq \frac{\varepsilon_{2}}{2(\mathcal{D}(\mathcal{V})+2\|K\|_{\infty,1})}$.
	On the other hand, if $(i',j')$ is not contained in $\mathcal{J}_1(\varepsilon_{2},t_0)\times \mathcal{J}_2(\varepsilon_{2},t_0)$ or $\mathcal{J}_2(\varepsilon_{2},t_0)\times \mathcal{J}_1(\varepsilon_{2},t_0)$, we have 
	\[|\theta_{i'}(t_0)-\theta_{j'}(t_0)|\leq \mathcal{D}(\Theta(t_0))-\varepsilon_{2}. \]
	Therefore, by using Lemma \ref{L2.3}, we have
	\[\begin{aligned}
		|\theta_{i'}(t)-\theta_{j'}(t)|&\leq |\theta_{i'}(t_0)-\theta_{j'}(t_0)|+(\mathcal{D}(\mathcal{V})+2\|K\|_{\infty,1})|t-t_0|\\
		&\leq \mathcal{D}(\Theta(t_0))-\frac{\varepsilon_{2}}{2},
	\end{aligned} \]
	for all $|t-t_0|\leq \frac{\varepsilon_{2}}{2(\mathcal{D}(\mathcal{V})+2\|K\|_{\infty,1})}$.
	To sum up, the phase diameter $\mathcal{D}(\Theta)$ satisfies
	\begin{equation}\label{D-6}
		\begin{aligned}
			\mathcal{D}(\Theta(t))&\leq \mbox{max}\left(\mathcal{D}(\Theta(t_0))-\|K\|_{\infty,1}\varepsilon_{2}(t-t_0), \mathcal{D}(\Theta(t_0))-\frac{\varepsilon_{2}}{2}\right)\\
			&=\mathcal{D}(\Theta(t_0))-\|K\|_{\infty,1}\varepsilon_{2}(t-t_0),
		\end{aligned} 
	\end{equation}
	for all $t\in [t_0,t_0+\frac{\varepsilon_{2}}{2(\mathcal{D}(\mathcal{V})+2\|K\|_{\infty,1})}]$, where we used 
	\[\|K\|_{\infty,1}\varepsilon_{2}\cdot\frac{\varepsilon_{2}}{2(\mathcal{D}(\mathcal{V})+2\|K\|_{\infty,1})}\leq \frac{\varepsilon_{2}^2}{4}<\frac{\varepsilon_{2}}{2} \]
	to find a bigger one in \eqref{D-6}.\\
	
	Once we prove the aforementioned claim, we can conclude that $\mathcal{D}(\Theta(t))$ reaches the lower bound 
	\[\sin^{-1}\left(\frac{\mathcal{D}(\mathcal{V})+\|K\|_{\infty,1}\varepsilon_0}{\|\boldsymbol{\tilde \kappa}\|_1\|K\|_{-\infty,1}} \right)+\frac{\varepsilon_0}{3}\]
	in a finite time. Since $\varepsilon_0$ can be chosen arbitrary small, we have 
	\[\limsup_{t\to\infty} \mathcal{D}(\Theta(t))\leq \inf_{\varepsilon_0>0}\left[\sin^{-1}\left(\frac{\mathcal{D}(\mathcal{V})+\|K\|_{\infty,1}\varepsilon_0}{\|\boldsymbol{\tilde \kappa}\|_1\|K\|_{-\infty,1}} \right)+\frac{\varepsilon_0}{3}\right]=\gamma, \]
	which is our desired result.
\end{proof}

\begin{remark} Note that the quantity $\|K \|_{-\infty, 1}$ measures the degree of coupling strengths. Therefore, Theorem \ref{T4.1} yields that the phase-diameter can be made as small as we want by increasing the quantity $\| K \|_{-\infty, 1}$. This is in fact called ''practical synchronization" as discussed in \cite{H-N-P2, K-Y-K-S, M-Z-C1, M-Z-C2} for synchronization models with finite system size. 
\end{remark}

\begin{corollary}\label{C4.1}
	Suppose that the initial data $\Theta^{\text{in}}$ and network topology $(\kappa_{ij})$ satisfy $(\mathcal{F}1)-(\mathcal{F}3)$, and we assume that all natural frequencies are identical:
	\[ \mathcal{D}(\mathcal{V})=0. \]
	If $\Theta$ is an $\ell^\infty$-solution to \eqref{D-1}, the phase diameter $\mathcal{D}(\Theta(t))$ converges to zero exponentially. 
\end{corollary}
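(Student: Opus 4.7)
My plan is to bootstrap the argument of Theorem \ref{T4.1} to the degenerate case $\mathcal{D}(\mathcal{V})=0$, converting it into a dissipative differential inequality for the phase diameter which yields exponential decay. In the homogeneous regime the estimate of Lemma \ref{L4.2p} loses the forcing term $\nu_{i}-\nu_{j}$, leaving a Gr\"{o}nwall-type bound on the upper right Dini derivative of $\mathcal{D}(\Theta)$. Once $\mathcal{D}(\Theta(t))$ is known to lie below $\pi/2$, the elementary bound $\sin x \ge \tfrac{2}{\pi} x$ on $[0,\pi/2]$ converts this inequality into exponential contraction.

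First, I would establish that $\mathcal{D}(\Theta(t)) \to 0$, so that after some finite time $T_{\star}$ one has $\mathcal{D}(\Theta(t)) \le \pi/2$. This is a direct reading of the proof of Theorem \ref{T4.1}: the linear descent in \eqref{D-6} depends only on $(\mathcal{F}1)$--$(\mathcal{F}3)$ and on a small auxiliary $\varepsilon_{2}>0$, and substituting $\mathcal{D}(\mathcal{V})=0$ with $\varepsilon_{0} \downarrow 0$ forces the asymptotic barrier $\gamma$ to vanish. Hence $\mathcal{D}(\Theta(t))$ enters and remains in $[0,\pi/2]$ after some finite time.

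Next, for each $t_{0} \ge T_{\star}$ I would apply Lemma \ref{L4.2p} with $\nu_{i}-\nu_{j}=0$. Combined with $(\mathcal{F}2)$--$(\mathcal{F}3)$ via the lower bound $\min(\kappa_{ik},\kappa_{jk}) \ge \tilde{\kappa}_{k}\|K\|_{-\infty,1}$, this yields, for every $(i,j)\in \mathcal{J}_{1}(\varepsilon_{2},t_{0})\times \mathcal{J}_{2}(\varepsilon_{2},t_{0})$ and sufficiently small $\varepsilon_{2}$,
\[
\dot{\theta}_{i}(t_{0})-\dot{\theta}_{j}(t_{0}) \le -\|\boldsymbol{\tilde{\kappa}}\|_{1}\|K\|_{-\infty,1}\sin(\theta_{i}(t_{0})-\theta_{j}(t_{0}))+C\varepsilon_{2},
\]
for a constant $C$ depending only on $\|K\|_{\infty,1}$. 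Exactly as in Theorem \ref{T4.1}, index pairs outside $\mathcal{J}_{1}\times \mathcal{J}_{2}$ satisfy $|\theta_{i'}-\theta_{j'}| \le \mathcal{D}(\Theta(t_{0}))-\varepsilon_{2}$ and so cannot prescribe the supremum on a short time interval. Passing to the upper right Dini derivative of the Lipschitz function $t \mapsto \mathcal{D}(\Theta(t))$ (Lemma \ref{L2.2}(2)) and sending $\varepsilon_{2}\downarrow 0$, I expect
\[
D^{+}\mathcal{D}(\Theta(t)) \le -\|\boldsymbol{\tilde{\kappa}}\|_{1}\|K\|_{-\infty,1}\sin\bigl(\mathcal{D}(\Theta(t))\bigr),\qquad t \ge T_{\star}.
\]

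Finally, on $[T_{\star},\infty)$ the diameter lies in $[0,\pi/2]$, so $\sin(\mathcal{D}(\Theta(t))) \ge \tfrac{2}{\pi}\mathcal{D}(\Theta(t))$ and the inequality above reduces to
\[
D^{+}\mathcal{D}(\Theta(t)) \le -\tfrac{2}{\pi}\|\boldsymbol{\tilde{\kappa}}\|_{1}\|K\|_{-\infty,1}\,\mathcal{D}(\Theta(t));
\]
a standard comparison principle then gives exponential decay with rate $\tfrac{2}{\pi}\|\boldsymbol{\tilde{\kappa}}\|_{1}\|K\|_{-\infty,1}$. The hard part will be the step taking the Dini derivative: since $\mathcal{D}(\Theta)$ is a supremum over an infinite set and need not be attained, the uniformity of the bound in Lemma \ref{L4.2p} over $(i,j) \in \mathcal{J}_{1}\times \mathcal{J}_{2}$ must be used carefully in the limit $\varepsilon_{2}\downarrow 0$, and one must confirm that the non-extremal branch governing pairs outside $\mathcal{J}_{1}\times \mathcal{J}_{2}$ remains dominated by the contractive branch, as in the maximum selection leading to \eqref{D-6}.
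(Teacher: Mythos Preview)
Your strategy is sound and ultimately correct, but it differs from the paper's argument. The paper never passes to a Dini derivative. Instead, it makes an \emph{adaptive} choice of the auxiliary parameter: at each reference time $t_{0}$ it sets
\[
\varepsilon_{2}:=\frac{\|\boldsymbol{\tilde\kappa}\|_{1}\|K\|_{-\infty,1}}{10\|K\|_{\infty,1}+4\|\boldsymbol{\tilde\kappa}\|_{1}\|K\|_{-\infty,1}}\,\sin\mathcal{D}(\Theta(t_{0})),
\]
feeds this into Lemma~\ref{L4.2p}, and obtains a linear descent of $\mathcal{D}(\Theta)$ on a short interval whose length is also proportional to $\sin\mathcal{D}(\Theta(t_{0}))$. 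Iterating produces a discrete sequence $(t_{n})$ with $\mathcal{D}(\Theta(t_{n+1}))\le \mathcal{D}(\Theta(t_{n}))-c\sin^{2}\mathcal{D}(\Theta(t_{n}))$ and $t_{n}\lesssim \log n$, from which exponential decay follows. There is no preliminary step forcing $\mathcal{D}(\Theta)\le\pi/2$; the argument works directly on the whole range $(0,\pi)$.

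Your route instead fixes $t_{0}$, lets $\varepsilon_{2}\downarrow 0$ \emph{after} taking $h\to 0^{+}$, and lands on the clean differential inequality $D^{+}\mathcal{D}(\Theta)\le -\|\boldsymbol{\tilde\kappa}\|_{1}\|K\|_{-\infty,1}\sin\mathcal{D}(\Theta)$, which you then linearise on $[0,\pi/2]$. This is more elegant and yields an explicit rate $\tfrac{2}{\pi}\|\boldsymbol{\tilde\kappa}\|_{1}\|K\|_{-\infty,1}$, but it puts the analytic burden exactly where you flagged it: justifying the Dini derivative of a supremum over a countable family. Your order of limits (first $h\to 0^{+}$ with $\varepsilon_{2}$ fixed, using the uniform second-derivative bound of Lemma~\ref{L2.3} and the fact that the non-extremal branch sits below $\mathcal{D}(\Theta(t_{0}))-\varepsilon_{2}$; then $\varepsilon_{2}\to 0$) is the correct one and does close the argument. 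The paper's discrete iteration trades that limit interchange for slightly messier constants but entirely sidesteps the supremum-not-attained issue, which is precisely the infinite-ensemble obstruction emphasised throughout Section~\ref{sec:3}.
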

\begin{proof}
	In this case, we do not fix a constant $\varepsilon_0$, but instead define 
	\[\varepsilon_{2}:=\frac{\| {\boldsymbol{\tilde{\kappa}}} \|_1\|K\|_{-\infty,1}}{(10\|K\|_{\infty,1}+4\| {\boldsymbol{\tilde{\kappa}}} \|_1\|K\|_{-\infty,1})}\sin\mathcal{D}(\Theta(t_0))\] 
	and verify \eqref{D-5} immediately:
	\[2\varepsilon_2\leq \sin\mathcal{D}(\Theta(t_0))=\sin(\pi-\varepsilon_{1})=\sin\varepsilon_{1},\quad \sin(\pi-\varepsilon_{1})\leq \pi-\varepsilon_{1}. \]
	Then, for every $(i,j)\in\mathcal{J}_1(\varepsilon_{2},t_0)\times\mathcal{J}_2(\varepsilon_{2},t_0)$, one can apply Lemma \ref{L4.2p} as in Theorem \ref{T4.1} to obtain
	\begin{align*}
		\dot{\theta}_i(t)-\dot{\theta}_j(t)&\leq 5\|K\|_{\infty,1}\varepsilon_{2}-\| {\boldsymbol{\tilde{\kappa}}} \|_1\|K\|_{-\infty,1}\sin(\theta_i(t_0)-\theta_j(t_0))\\
		&\leq 5\|K\|_{\infty,1}\varepsilon_{2}-\| {\boldsymbol{\tilde{\kappa}}} \|_1\|K\|_{-\infty,1}\mbox{min}\left(\sin(\mathcal{D}(\Theta(t_0))-2\varepsilon_2), \sin\mathcal{D}(\Theta(t_0))\right)\\
		&\leq 5\|K\|_{\infty,1}\varepsilon_{2}-\| {\boldsymbol{\tilde{\kappa}}} \|_1\|K\|_{-\infty,1}(\sin(\mathcal{D}(\Theta(t_0)))-2\varepsilon_{2})\\
		&=(5\|K\|_{\infty,1}+2\| {\boldsymbol{\tilde{\kappa}}} \|_1\|K\|_{-\infty,1})\varepsilon_{2}-\| {\boldsymbol{\tilde{\kappa}}} \|_1\|K\|_{-\infty,1}\sin(\mathcal{D}(\Theta(t_0)))\\
		&=-\frac{1}{2}\| {\boldsymbol{\tilde{\kappa}}} \|_1\|K\|_{-\infty,1}\sin(\mathcal{D}(\Theta(t_0))),
	\end{align*}
	for all $|t-t_0|\leq\frac{\varepsilon_{2}}{4\|K\|_{\infty,1}}$.\\
	
	Now, define $t_0:=0$ and set $t_1,t_2,\ldots$ iteratively by using the relation
	\[t_{n+1}=t_n+\frac{1}{56\|K\|_{\infty,1}}\sin\mathcal{D}(\Theta(t_n)). \]
	Then, we have the following series of inequalities:
	\[\begin{aligned}
		\mathcal{D}(\Theta(t_{n+1}))\leq \mathcal{D}(\Theta(t_{n}))-\frac{1}{112}\sin^2\mathcal{D}(\Theta(t_{n}))\leq \mathcal{D}(\Theta(t_{n}))-\frac{\sin^2\mathcal{D}(\Theta^{\text{in}})}{112\mathcal{D}(\Theta^{\text{in}})^2}\mathcal{D}(\Theta(t_{n}))^2.
	\end{aligned} \]
	Therefore, we have 
	\[\begin{aligned}
		\mathcal{D}(\Theta(t_n))\leq \frac{1}{\frac{1}{\mathcal{D}(\Theta^{\text{in}})}+n\cdot \frac{112\mathcal{D}(\Theta^{\text{in}})^2}{\sin^2\mathcal{D}(\Theta^{\text{in}})}},\quad t_n\lesssim \frac{\sin^2\mathcal{D}(\Theta^{\text{in}})}{112\mathcal{D}(\Theta^{\text{in}})^2}\cdot \frac{1}{56\|K\|_{\infty,1}}\cdot \log n,
	\end{aligned} \]
	which shows the exponential convergence of $\mathcal{D}(\Theta(t))$ with respect to $t$.

\end{proof}

\section{Kuramoto ensemble on a sender network} \label{sec:5}
\setcounter{equation}{0}
In this section, we consider a network topology in which capacity at the $i$-th node depends only on neighboring nodes: 
\begin{equation} \label{E-0}
	\kappa_{ij} = \kappa_j > 0, \quad  i, j \in {\mathbb N} \quad \mbox{and} \quad  \sum_{i\in\mathbb{N}} \kappa_i = 1,
\end{equation}
which represents the second case of Remark \ref{R2.1}. For this network topology, we can derive synchronization estimates for an infinite homogeneous Kuramoto ensemble without any restriction on the size of the initial configuration. Furthermore, we can also derive an exponential synchronization for a heterogeneous ensemble.\\

Consider the Cauchy problem for an infinite Kuramoto model over sender network \eqref{E-0}:
\begin{equation}\label{E-1}
	\begin{cases}
		\displaystyle \dot{\theta}_{i}= \nu_i + \sum_{j\in\mathbb{N}} \kappa_j\sin\left(\theta_j-\theta_{i}\right), \quad t>0, \\
		\displaystyle \theta_i(0)=\theta_i^{\text{in}},\quad \forall~ i\in \mathbb{N}.
	\end{cases}
\end{equation}
In the following two subsections, we study the emergent dynamics of \eqref{E-1} for homogeneous and heterogeneous ensembles, respectively. 
\subsection{Homogeneous ensemble} \label{sec:5.1}
In this subsection, we consider the homogeneous ensemble with the same natural frequencies:
\[ \nu_i = \nu, \quad i \in {\mathbb N}. \]
Then, as discussed in Section \ref{sec:2}, we may assume that $\nu = 0$ without loss of generality. 
\begin{equation}\label{E-1-1}
	\begin{cases}
		\displaystyle \dot{\theta}_{i}=  \sum_{j\in\mathbb{N}} \kappa_j\sin\left(\theta_j-\theta_{i}\right), \quad t>0, \\
		\displaystyle \theta_i(0)=\theta_i^{\text{in}},\quad \forall~ i\in \mathbb{N}.
	\end{cases}
\end{equation}

\subsubsection{Order parameters} \label{sec:5.1.1} This part introduces the order parameters associated with \eqref{E-1-1}. A polar representation of the weighted sum of $z_i$ gives the order parameters $(r, \phi)$ for the phase configuration $\Theta$:
\begin{equation} \label{E-2}
	re^{{\rm{i}}\phi} :=\sum_{k\in\mathbb{N}} \kappa_{k}e^{{\rm{i}}\theta_{k}}.
\end{equation}
This is equivalent to
\begin{equation}\label{E-3} 
	re^{{\rm{i}}\left(\phi-\theta_{i}\right)}=\sum_{k\in\mathbb{N}} \kappa_{k}e^{{\rm{i}}\left(\theta_{k}-\theta_{i}\right)}.
\end{equation}
We now compare the imaginary part of \eqref{E-3} to obtain
\[ r\sin\left(\phi-\theta_{i}\right)=\sum_{k\in\mathbb{N}} \kappa_{k}\sin\left(\theta_{k}-\theta_{i}\right). \]
Then, we use the above relation and $\eqref{E-1-1}_1$ to rewrite infinite Kuramoto model using order parameters:
\begin{equation} \label{E-4}
	\dot{\theta}_{i} = r\sin(\phi - \theta_i).
\end{equation}

In the following lemma, we study the governing system for $(r, \phi)$.
\begin{lemma} \label{L5.1}
	Let $\Theta$ be a solution to \eqref{E-1-1}. Then, the order parameters $(r, \phi)$ satisfy
	\[ 
	\begin{cases}
		\displaystyle \dot{r} =r\sum_{k\in\mathbb{N}} \kappa_{k}\sin^2\left(\theta_{k}-\phi\right),\quad t>0, \\
		\displaystyle \dot{\phi} =\sum_{k\in\mathbb{N}} \kappa_{k}\cos\left(\theta_{k}-\phi\right)\sin\left(\theta_{k}-\phi\right).
	\end{cases}
	\]
\end{lemma}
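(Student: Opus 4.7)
The plan is to differentiate the polar representation \eqref{E-2} in time and match real and imaginary parts. Because $\sum_{k}\kappa_{k}=1$ by \eqref{E-0} and $|\dot\theta_{k}|\leq \|K\|_{\infty,1}=1$ by $\eqref{B-10}_{1}$ with $\nu\equiv 0$, both the defining series $\sum_{k}\kappa_{k}e^{{\rm i}\theta_{k}}$ and its formal termwise derivative $\sum_{k}{\rm i}\kappa_{k}\dot\theta_{k}e^{{\rm i}\theta_{k}}$ converge absolutely and uniformly on compact time intervals. This legitimizes differentiation under the infinite sum and yields
\begin{equation*}
(\dot r+{\rm i}\,r\dot\phi)\,e^{{\rm i}\phi} \;=\; \frac{d}{dt}\sum_{k\in\mathbb{N}}\kappa_{k}e^{{\rm i}\theta_{k}} \;=\; {\rm i}\sum_{k\in\mathbb{N}}\kappa_{k}\dot\theta_{k}\,e^{{\rm i}\theta_{k}}.
\end{equation*}

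Next I would multiply through by $e^{-{\rm i}\phi}$ and substitute the closed form $\dot\theta_{k}=r\sin(\phi-\theta_{k})$ from \eqref{E-4}, converting the right-hand side into
\begin{equation*}
{\rm i}\sum_{k\in\mathbb{N}}\kappa_{k}\bigl(r\sin(\phi-\theta_{k})\bigr)\bigl(\cos(\theta_{k}-\phi)+{\rm i}\sin(\theta_{k}-\phi)\bigr),
\end{equation*}
after which separating real and imaginary parts isolates $\dot r$ and $r\dot\phi$ in the forms stated in the lemma.

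An equivalent real-variable route that sidesteps complex arithmetic is to start from the two scalar identities obtained by equating the real and imaginary parts of \eqref{E-3},
\begin{equation*}
r \;=\; \sum_{k\in\mathbb{N}}\kappa_{k}\cos(\theta_{k}-\phi),\qquad 0 \;=\; \sum_{k\in\mathbb{N}}\kappa_{k}\sin(\theta_{k}-\phi),
\end{equation*}
and differentiate each using $\frac{d}{dt}(\theta_{k}-\phi)=\dot\theta_{k}-\dot\phi=r\sin(\phi-\theta_{k})-\dot\phi$. The first identity produces $\dot r$ immediately because the term proportional to $\dot\phi$ is killed by the second identity. Differentiating the second identity and invoking $\sum_{k}\kappa_{k}\cos(\theta_{k}-\phi)=r$ then solves explicitly for $\dot\phi$.

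The only genuine care point is that $(r,\phi)$ be well-defined and smooth along the flow, which requires $r>0$. Provided $r(0)>0$, the formula $\dot r=r\sum_{k}\kappa_{k}\sin^{2}(\theta_{k}-\phi)\geq 0$ (once derived) shows $r$ is non-decreasing, so $\phi$ remains uniquely determined and $\mathcal{C}^{1}$ for all $t\geq 0$. Beyond the summability justification for termwise differentiation — immediate from $\sum_{k}\kappa_{k}<\infty$ together with the uniform bound on $\|\dot\Theta\|_{\infty}$ — no substantive obstacle arises, and the remainder is a routine trigonometric calculation.
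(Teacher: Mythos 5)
Your proposal is correct and follows essentially the same route as the paper: differentiate the polar representation \eqref{E-2}, divide by $e^{{\rm i}\phi}$, substitute $\dot\theta_k = r\sin(\phi-\theta_k)$ from \eqref{E-4}, and compare real and imaginary parts (valid for $r>0$, as the paper also notes). Your added justification of term-wise differentiation via $\sum_k \kappa_k<\infty$ and the alternative real-variable derivation are fine extras but do not change the argument.
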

\begin{proof} We differentiate \eqref{E-2} to find
	\[ \dot{r}e^{{\rm{i}}\phi}+{\rm{i}}re^{{\rm{i}}\phi}\dot{\phi}={\rm{i}}\sum_{k\in\mathbb{N}} \kappa_{k}e^{{\rm{i}}\theta_{k}}\cdot\dot{\theta}_{k}. \]
	Now, we divide the above relation by $e^{{\mathrm i}\phi}$ to see
	\begin{align*}
		\begin{aligned}
			& \dot{r}  +{\rm{i}} r  \dot{\phi}={\rm{i}}\sum_{k\in\mathbb{N}} \kappa_{k}e^{{\rm{i}} (\theta_{k} - \phi)}\cdot\dot{\theta}_{k}  = -\sum_{k\in\mathbb{N}}  \kappa_k {\dot \theta}_k \sin(\theta_k - \phi) + {\mathrm i} \sum_{k\in\mathbb{N}} \kappa_k {\dot \theta}_k \cos(\theta_k - \phi). 
		\end{aligned}
	\end{align*}
	We compare the real and imaginary parts of the above relation and use  \eqref{E-4}. More precisely, \newline
	
	\noindent $\bullet$~(Real part):~From direct calculation, we have 
	\[
	\dot{r} = \sum_{k\in\mathbb{N}}  \kappa_k {\dot \theta}_k \sin(\phi- \theta_k) = r \sum_{k\in\mathbb{N}}  \kappa_k \sin^2(\phi - \theta_k).
	\]
	\noindent $\bullet$~(Imaginary part):  Similarly, one has 
	\[
	r  \dot{\phi} = \sum_{k\in\mathbb{N}} \kappa_k {\dot \theta}_k \cos(\theta_k - \phi) = r \sum_{k\in\mathbb{N}} \kappa_k \sin(\phi - \theta_k)\cos(\phi - \theta_k).
	\]
	Therefore, we have the desired dynamics for $\phi$ as long as $r > 0$. 
\end{proof}
Next, we study the asymptotic behaviors of \eqref{E-1-1}. 
\begin{proposition}\label{P5.1}
	Let $\Theta$ be a solution to \eqref{E-1-1}. Then, the following dichotomy holds:
	\[ \emph{Either}~r(t)\equiv  0 \quad \emph{or} \quad \lim_{t \to \infty}  \sum_{k\in\mathbb{N}} \kappa_{k}\sin^{2}\left(\theta_{k}(t) -\phi(t) \right) = 0. \]
\end{proposition}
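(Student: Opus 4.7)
The plan is to exploit the monotonicity of $r(t)$ given by Lemma~\ref{L5.1} together with Barbalat's lemma applied to the nonnegative driving term in the $r$-equation.

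First I would note from Lemma~\ref{L5.1} that $\dot r = r F(t)$ with $F(t) := \sum_{k\in\mathbb{N}} \kappa_k\sin^2(\theta_k(t)-\phi(t)) \geq 0$, so $r$ is non-decreasing; since the triangle inequality together with $\sum_k \kappa_k = 1$ gives $0\leq r(t)\leq 1$, one has $r(t)\to r^\infty \in [0,1]$. To establish the dichotomy I would handle the degenerate case $r(0)=0$ separately. Then $\sum_k \kappa_k e^{\mathrm{i}\theta_k^{\text{in}}} = 0$ and, comparing the imaginary part of $\sum_k \kappa_k e^{\mathrm{i}(\theta_k^{\text{in}}-\theta_i^{\text{in}})} = 0$, one obtains $\dot\theta_i(0) = \sum_k \kappa_k\sin(\theta_k^{\text{in}}-\theta_i^{\text{in}}) = 0$ for every $i$. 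Thus $\Theta^{\text{in}}$ is itself an equilibrium of \eqref{E-1-1}, so $\ell^\infty$-uniqueness from Proposition~\ref{P2.2} forces $\Theta(t)\equiv \Theta^{\text{in}}$ and $r(t)\equiv 0$, which is the first alternative.

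In the complementary case $r(0)>0$, monotonicity gives $r(t)\geq r(0) > 0$ throughout, so $\phi$ is well-defined along the flow and we may divide by $r$. Integrating $d(\log r)/dt = F(t)$ yields
\[
\int_0^\infty F(t)\,dt \;=\; \log\frac{r^\infty}{r(0)} \;<\; \infty.
\]
Since $F\geq 0$, the differential form of Barbalat's lemma (Lemma~\ref{LA-2}) delivers $F(t)\to 0$ as soon as $F$ is shown to be uniformly continuous. For this I would bound $|\dot F|$ via termwise differentiation,
\[
\dot F \;=\; \sum_{k\in\mathbb{N}} \kappa_k\sin\bigl(2(\theta_k-\phi)\bigr)(\dot\theta_k-\dot\phi),
\]
combined with $|\dot\theta_k|\leq \|K\|_{\infty,1} = 1$ from Lemma~\ref{L2.2}(1) (noting $\mathcal{V}\equiv 0$) and the explicit formula for $\dot\phi$ in Lemma~\ref{L5.1}, which yields $|\dot\phi|\leq \tfrac{1}{2}\sum_k\kappa_k = \tfrac{1}{2}$. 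Together these give $|\dot F|\leq \tfrac{3}{2}$, so $F$ is Lipschitz and Barbalat closes the argument.

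The main technical point, rather than a conceptual obstacle, is the justification of the termwise differentiation of the infinite series defining $F$. This is settled by the Weierstrass $M$-test applied to the differentiated series: each of its terms is dominated in absolute value by $\tfrac{3}{2}\kappa_k$ uniformly in $t$, and $\sum_k \kappa_k<\infty$, so the differentiated series converges uniformly on $[0,\infty)$; together with the pointwise convergence of the partial sums of $F$ this legitimizes $\dot F = \lim_{N\to\infty}\dot F_N$ and delivers the Lipschitz bound on which Barbalat depends.
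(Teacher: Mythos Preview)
Your proof is correct and follows essentially the same route as the paper: split into $r(0)=0$ (stationary by uniqueness) versus $r(0)>0$, integrate $\dot r/r=F$ to get $F\in L^{1}(0,\infty)$, bound $\dot F$ uniformly, and invoke Barbalat. Your constants are sharper ($|\dot F|\le 3/2$ against the paper's $4$) and you add the $M$-test justification for termwise differentiation that the paper leaves implicit; the only cosmetic point is that Lemma~\ref{LA-2}, as stated, should be applied to the antiderivative $G(t)=\int_0^t F$ rather than to $F$ itself.
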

\begin{proof}
	Below, we consider two cases:
	\[ r(0) = 0, \quad r(0) > 0. \]
	\noindent $\bullet$~Case A $(r(0) = 0)$: In this case, we employ the uniqueness of ODEs to \eqref{E-4} and obtain
	\[ \theta_i(t) = \theta_i^{\text{in}}, \quad i \in {\mathbb N}, \quad \mbox{i.e.,} \quad r(t) = r(0) = 0. \]

	\noindent $\bullet$~Case B $(r(0) > 0)$: In this case, $r$ is uniformly bounded by one and monotonically increasing in time $t$ (Lemma \ref{L5.1}) :
	\[ r(t) \leq \sum_{k\in\mathbb{N}} \kappa_k = 1, \quad {\dot r}(t) \geq  0, \quad t > 0. \]
	Therefore, there exists a positive real number $r^{\infty} \in [r(0), 1]$ such that 
	\[\lim_{t \to \infty} r(t) = r^{\infty}. \]
	Now, we claim that 
	\begin{equation} \label{E-6}
		\int_0^{\infty} \sum_{k\in\mathbb{N}} \kappa_{k}\sin^{2}\left(\theta_{k}(t)-\phi(t) \right)dt < \infty \quad \mbox{and} \quad  \left|\frac{d}{dt}\left(\sum_{k} \kappa_{k}\sin^{2}\left(\theta_{k}-\phi\right)\right)\right| \leq 4.
	\end{equation}
	
	\vspace{0.2cm}
	
	\noindent $\diamond$~(Derivation of the first relation in \eqref{E-6}): By using Lemma \ref{L5.1}, we have
	\[
	\frac{\dot{r}}{r}=\sum_{k} \kappa_{k}\sin^{2}\left(\theta_{k}-\phi\right) \quad  \Longrightarrow \quad \ln\left(r\left(t\right)\right)-\ln\left(r\left(0\right)\right)=\int_{0}^{t}\sum_{k\in\mathbb{N}} \kappa_{k}\sin^{2}\left(\theta_{k}\left(s\right)-\phi\left(s\right)\right)ds.
	\]
	Therefore, we take a limit $t \to \infty$ to obtain $\eqref{E-6}_1$. \newline
	
	\noindent $\diamond$~(Derivation of the second relation in \eqref{E-6}): By direct calculation, we have
	\begin{align*}
		& \left|\frac{d}{dt}\left(\sum_{k\in\mathbb{N}}  \kappa_{k}\sin^{2}\left(\theta_{k}-\phi\right)\right)\right|\\
		&\hspace{0.5cm}  =\left|2\sum_{k\in\mathbb{N}} \kappa_{k}\left(\dot{\theta}_{k}-\dot{\phi}\right)\sin\left(\theta_{k}-\phi\right)\cos\left(\theta_{k}-\phi\right)\right| \le2\sum_{k\in\mathbb{N}} \kappa_{k}\left(\left|\dot{\theta}_{k}\right|+\left|\dot{\phi}\right|\right)\\
		&\hspace{0.5cm} = 2\sum_{k\in\mathbb{N}} \kappa_{k} \left(\left|\sum_{m=1}^{\infty} \kappa_m \sin (\theta_m - \theta_k)\right|+\left| \sum_{m=1}^{\infty} \kappa_{m}\cos\left(\theta_{m}-\phi\right)\sin\left(\theta_{m}-\phi\right) \right|\right)\\
		&\hspace{0.5cm}  \le 4 \Big( \sum_{k\in\mathbb{N}} \kappa_{k}  \Big)^2  =4.
	\end{align*}
	Finally, we apply the integral version of Barbalat\rq{}s lemma for $\sum_{k\in\mathbb{N}} \kappa_{k}\sin^{2}\left(\theta_{k}(t)-\phi(t) \right)$ to get the zero convergence. 
\end{proof}
As a direct application of Proposition \ref{P5.1}, we have the complete synchronization of \eqref{E-1-1}. 
\begin{theorem} \label{T5.1}
	Let $\Theta$ be a solution to \eqref{E-1-1}. Then,  the following assertions hold. 
	\begin{enumerate}
		\item
		Complete synchronization emerges asymptotically:
		\[  \lim_{t \to \infty} |{\dot \theta}_i(t) - {\dot \theta}_j(t)| = 0, \quad i, j \in {\mathbb N}. \]
		\item
		If $r(0) > 0$, then for each pair $(i, j)$, there exists an integer $n_{ij}$ such that 
		\[ \lim_{t \to \infty} (\theta_i(t) - \theta_j(t)) = n_{ij} \pi. \]
	\end{enumerate}
\end{theorem}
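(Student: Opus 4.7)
The plan is to exploit the order-parameter form \eqref{E-4}, $\dot\theta_i = r\sin(\phi-\theta_i)$, together with the dichotomy in Proposition \ref{P5.1}, to obtain both conclusions in tandem.

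For part (1), I would split along the dichotomy of Proposition \ref{P5.1}. If $r(0)=0$, then at $t=0$ one has $\sum_j\kappa_j\sin(\theta_j^{\text{in}}-\theta_i^{\text{in}})=\mathrm{Im}\bigl(e^{-i\theta_i^{\text{in}}}\sum_j\kappa_je^{i\theta_j^{\text{in}}}\bigr)=0$, so the stationary sequence $\theta_i(t)\equiv\theta_i^{\text{in}}$ solves \eqref{E-1-1}; by the uniqueness of $\ell^\infty$-solutions (Proposition \ref{P2.2}), this must be the solution, and $\dot\theta_i-\dot\theta_j\equiv 0$ trivially. Otherwise $r(0)>0$, and Proposition \ref{P5.1} gives $\sum_k\kappa_k\sin^2(\theta_k(t)-\phi(t))\to 0$. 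Since every summand is nonnegative, $0\le\kappa_k\sin^2(\theta_k(t)-\phi(t))\le\sum_j\kappa_j\sin^2(\theta_j(t)-\phi(t))\to 0$, and because $\kappa_k>0$ by \eqref{E-0} each $\sin(\theta_k(t)-\phi(t))\to 0$. Combined with the uniform bound $r(t)\le 1$ following from Lemma \ref{L5.1}, the identity \eqref{E-4} yields $\dot\theta_k(t)\to 0$ for every $k$, whence $|\dot\theta_i-\dot\theta_j|\to 0$.

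For part (2) I assume $r(0)>0$, so $r(t)\ge r(0)>0$ for all $t$ (using $\dot r\ge 0$ from Lemma \ref{L5.1}); then $t\mapsto re^{i\phi}$ is a continuous non-vanishing complex curve, so the argument $\phi$ can be chosen continuously in $t$, and consequently $\phi-\theta_i$ is continuous. From the previous paragraph $\sin(\phi(t)-\theta_i(t))\to 0$, i.e.\ $\mathrm{dist}(\phi(t)-\theta_i(t),\pi\mathbb{Z})\to 0$. To upgrade this to convergence to a specific multiple of $\pi$, I fix $\varepsilon\in(0,\pi/2)$ and pick $T$ with $\phi(t)-\theta_i(t)\in\bigcup_{n\in\mathbb{Z}}(n\pi-\varepsilon,n\pi+\varepsilon)$ for all $t\ge T$. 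Since these open intervals are pairwise disjoint and the image $(\phi-\theta_i)([T,\infty))$ is connected, it must lie in a single component $(n_i\pi-\varepsilon,n_i\pi+\varepsilon)$; a quick overlap check shows $n_i$ is independent of $\varepsilon$, so letting $\varepsilon\to 0$ gives $\phi(t)-\theta_i(t)\to n_i\pi$. Subtracting yields $\theta_i(t)-\theta_j(t)\to(n_j-n_i)\pi=:n_{ij}\pi$.

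The main obstacle is the last connectedness step: upgrading accumulation in $\pi\mathbb{Z}$ to honest convergence at a specific $n_i\pi$, ruling out trajectories that hop between different zeros of sine. The decisive ingredients are the discreteness of $\pi\mathbb{Z}$ and the continuity of $\phi-\theta_i$, which together trap the curve near one specific multiple of $\pi$ once the distance to $\pi\mathbb{Z}$ drops below $\pi/2$. The subsidiary technical point is the continuous selection of $\phi$, which is legitimate precisely because $r$ stays uniformly positive in the nontrivial branch.
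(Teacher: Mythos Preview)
Your proof is correct and follows essentially the same route as the paper: both split along the dichotomy of Proposition~\ref{P5.1}, use the order-parameter identity \eqref{E-4} to get $\dot\theta_k\to 0$ in the nontrivial branch, and then deduce that each $\theta_k-\phi$ tends to a multiple of $\pi$. Your treatment of part~(2) is in fact more careful than the paper's: the paper simply writes $\lim_{t\to\infty}\sin(\theta_k-\phi)=\sin\bigl(\lim_{t\to\infty}(\theta_k-\phi)\bigr)=0$ without first justifying that the inner limit exists, whereas your connectedness argument (trapping the continuous curve $\phi-\theta_i$ in a single interval $(n_i\pi-\varepsilon,n_i\pi+\varepsilon)$ once $\varepsilon<\pi/2$) properly rules out hopping between different zeros of sine and pins down a specific $n_i$.
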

\begin{proof}
	(i)~For the case in which $r(0) = 0$, we have
	\[ {\dot \theta}_i(t) = 0, \quad t > 0, \]
	which is indeed a steady-state solution. Now, we consider a generic case in which $r(0) > 0$. In this case, the dichotomy in Proposition \ref{P5.1} yields
	\begin{equation} \label{E-7}
		\lim_{t \to \infty}  \sum_{k\in\mathbb{N}} \kappa_{k}\sin^{2}\left(\theta_{k}(t) -\phi(t) \right) = 0. 
	\end{equation}
	On the other hand, it follows from Lemma \ref{L5.1} and \eqref{E-4} that
	\begin{equation} \label{E-8}
		\sin(\phi - \theta_i) = \frac{{\dot \theta}_i}{r} .  
	\end{equation} 
	Finally, we combine \eqref{E-7} and \eqref{E-8} to obtain
	\[
	{\lim_{t \to \infty}  \frac{\sum_{k\in\mathbb{N}} \kappa_{k} |{\dot \theta}_k(t)|^2}{r^2(t)} = 0.}
	\]
	This implies
	\[ \lim_{t \to \infty} |{\dot \theta}_i(t)| = 0, \quad \forall~i \in {\mathbb N},  \]
	so that complete synchronization emerges:
	\[ \lim_{t \to \infty} |{\dot \theta}_i(t) - {\dot \theta}_j(t)| = 0, \quad  i, j \in {\mathbb N}.  \]
	
	\vspace{0.2cm}
	
	\noindent (ii)~From \eqref{E-7}, we have
	\[ r(t) \geq r(0) > 0 \quad \mbox{and} \quad  \lim_{t \to \infty} \sin \left(\theta_{k}(t) -\phi(t) \right) = \sin \Big(\lim_{t \to \infty} (\theta_{k}(t) -\phi(t) ) \Big)=  0, \quad \forall~k \in {\mathbb N}.  \]
	Hence, we have
	\[ \lim_{t \to \infty}  \left({\theta_{i}(t) -\theta_j(t)} \right) = n_{ij} \pi, \quad \mbox{for some $n_{ij} \in {\mathbb Z}$}. \]
\end{proof}
\subsubsection{Constant of motion} In this part, we provide two time-invariants for the dynamical system \eqref{E-1-1} that allow us to identify synchronized states.   \newline

\noindent $\bullet$~(Constant of motion I):~Let $\Theta$ be a phase configuration whose dynamics is governed by \eqref{E-1-1}. Then, the weighted sum ${\mathcal S}(\Theta, A)$ is given as follows:
\begin{equation*} \label{E-9}
	{\mathcal S}(\Theta, A) := \sum_{k\in\mathbb{N}} \kappa_k \theta_k, \quad {\boldsymbol{\kappa}}  = (\kappa_k).
\end{equation*}
Then, it is easy to see that ${\mathcal S}(\Theta, A)$ is time-invariant:
\begin{equation} \label{E-9-1}
	\frac{d}{dt} {\mathcal S}(\Theta, A) = \sum_{k\in\mathbb{N}} \kappa_k {\dot \theta}_k =    \sum_{j, k=1}^{\infty} \kappa_j \kappa_k \sin\left(\theta_j-\theta_{k}\right) = 0.
\end{equation}
In the following proposition, we are ready to verify the convergence of $\theta_i$ for each $i\in\mathbb{N}$. First we present the collision avoidance between oscillators.

\begin{lemma}\label{L5.2}
	Let $\Theta$ be a solution to \eqref{E-1-1}. Then for each $i,j\in\mathbb{N}$,
	\[
	\theta_j^{\text{in}}<\theta_{i}^{\text{in}} \quad \Longrightarrow \quad \theta_j(t)\le\theta_{i}(t)\le\theta_j(t)+2\pi\text{ for }~~t > 0.
	\]	
\end{lemma}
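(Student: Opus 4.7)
The central observation is that, via the order-parameter reformulation \eqref{E-4}, every oscillator satisfies the \emph{same} scalar nonautonomous ODE
\[
\dot\psi = F(t,\psi) := r(t)\sin(\phi(t) - \psi),
\]
where $r(t)$ and $\phi(t)$ are determined globally by the already-constructed full solution $\Theta(t)$ through \eqref{E-2}. Since $r(t)$ and $\phi(t)$ are continuous (in fact smooth) in $t$, and $F$ is globally Lipschitz in $\psi$ with Lipschitz constant bounded by $r(t) \le 1$, the scalar IVP falls under the standard Picard--Lindel\"of uniqueness theorem. In particular, each $\theta_i(\cdot)$ is the unique solution of this ODE with initial value $\theta_i^{\text{in}}$.

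My plan has two steps, both consequences of uniqueness. For the lower bound $\theta_j(t) \le \theta_i(t)$, let $\Psi_t:\alpha \mapsto \psi(t;\alpha)$ be the time-$t$ flow of $F$. Uniqueness gives injectivity of $\Psi_t$, Gr\"onwall gives continuity in initial data, and together they force $\Psi_t$ to be a strictly monotone self-bijection of $\mathbb{R}$. Applying monotonicity to $\theta_j^{\text{in}} < \theta_i^{\text{in}}$ yields $\theta_j(t) < \theta_i(t)$ for all $t > 0$. For the upper bound $\theta_i(t) \le \theta_j(t) + 2\pi$, I would exploit the $2\pi$-periodicity of $F$ in $\psi$: the translated curve $t \mapsto \theta_j(t) + 2\pi$ is itself a solution of the same scalar ODE, now with initial value $\theta_j^{\text{in}} + 2\pi$. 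Under the implicit hypothesis $\theta_i^{\text{in}} \le \theta_j^{\text{in}} + 2\pi$ (without which the conclusion clearly fails at $t=0$), monotonicity of $\Psi_t$ again gives
\[
\theta_i(t) = \Psi_t(\theta_i^{\text{in}}) \le \Psi_t(\theta_j^{\text{in}} + 2\pi) = \theta_j(t) + 2\pi.
\]

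The only subtlety is that $\phi(t)$ is defined only when $r(t) > 0$. By Case A of Proposition \ref{P5.1}, if $r(0)=0$ the configuration is frozen at the initial data and the claim is trivial; if $r(0)>0$, then $\dot r \ge 0$ from Lemma \ref{L5.1} keeps $r(t)>0$ on $[0,\infty)$, so $\phi$ is smoothly defined throughout and the scalar-ODE argument applies verbatim. I do not expect a genuine obstacle: the lemma is a soft consequence of the scalar mean-field structure of \eqref{E-4} combined with ODE uniqueness and $2\pi$-periodicity.
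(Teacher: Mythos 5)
Your proof is correct and reaches the lemma by a genuinely different (and in one respect sturdier) route than the paper. The paper argues by contradiction at a first collision time $t_0$: using \eqref{E-4} it shows $\dot\theta_i(t_0)=\dot\theta_j(t_0)$, then inductively that all higher time-derivatives of $\theta_i-\theta_j$ agree at $t_0$, and concludes coincidence of the two phases near $t_0$ from analyticity of the solution, contradicting the strict ordering before $t_0$. You instead observe that every phase solves the same scalar nonautonomous ODE $\dot\psi=r(t)\sin(\phi(t)-\psi)$, whose right-hand side is continuous in $t$ and $1$-Lipschitz in $\psi$ (since $\sum_{k}\kappa_k=1$), so forward and backward Picard--Lindel\"of uniqueness makes the scalar flow order-preserving; the lower bound follows at once, and the upper bound follows by comparing $\theta_i$ with the shifted solution $\theta_j+2\pi$. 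What your route buys: it needs only Lipschitz uniqueness, whereas the paper's proof leans on analyticity of $t\mapsto\theta_i(t)-\theta_j(t)$, which it attributes to Proposition \ref{P2.2} even though that proposition only delivers ${\mathcal C}^1$ well-posedness via Cauchy--Lipschitz; your argument sidesteps that weak point. You also treat explicitly the degenerate case $r(0)=0$ (frozen configuration, via Proposition \ref{P5.1}) and correctly flag that the upper bound requires the implicit hypothesis $\theta_i^{\text{in}}\le\theta_j^{\text{in}}+2\pi$, which holds where the lemma is applied (Proposition \ref{P5.2}); the paper's written proof only establishes the non-crossing half and is silent on both points. Two small things to nail down in a final write-up: (i) ``injective and continuous, hence strictly monotone'' gives monotonicity but not yet its direction; upgrade to strictly increasing either by the first-crossing argument (the continuous function $t\mapsto\theta_i(t)-\theta_j(t)$ never vanishes by uniqueness, so its sign equals its sign at $t=0$) or by the $2\pi$-equivariance $\Psi_t(\alpha+2\pi)=\Psi_t(\alpha)+2\pi$; (ii) you do not actually need $\phi$ at all, since the right-hand side can be written as $\sum_{k}\kappa_k\sin(\theta_k(t)-\psi)$, which is continuous in $t$ and $1$-Lipschitz in $\psi$ whether or not $r$ vanishes, so even the case split on $r(0)$ is optional.
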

\begin{proof}
	Suppose that there exists a first collision time $t_0 >0$ between $\theta_i$ and $\theta_j$, i.e.,
	\begin{equation} \label{E-10}
		{ \theta_j(t) <  \theta_i(t)}, \quad t < t_0, \quad  \theta_j(t_0) = \theta_i(t_0). 
	\end{equation}
	Then, it follows from \eqref{E-1-1} that
	\begin{equation}\label{E-11}
		\dot{\theta}_{i}(t_0)-\dot{\theta}_{j}(t_0) =r \Big(\sin\left(\phi(t_0)-\theta_{i}(t_0)\right)-\sin\left(\phi(t_0)-\theta_j(t_0)\right) \Big) = 0. 
	\end{equation}
	Inductively, one can see that
	\[ 
	\frac{d^n \theta_i}{dt^n} \Big|_{t = t_0} = \frac{d^n \theta_j}{dt^n} \Big|_{t = t_0}, \quad n \geq 2. 
	\]	
	Since $\theta_i - \theta_j$ is analytic at $t = t_0$ by Proposition \ref{P2.2}, there exists $\delta > 0$ such that 
	\[ \theta_i(t) = \theta_j(t), \quad t \in (t_0 - \delta, t_0 + \delta), \]
	which is contradictory to $\eqref{E-10}_1$. 
\end{proof}
The result of Lemma \ref{L5.2} implies that if oscillators' phases are different initially, they can not cross each other in any finite time. On the other hand, Theorem \ref{T5.1} does not imply the convergence of each phase itself. By combining the conservation of the weighted sum, one can show that each oscillator is converging.

\begin{proposition}\label{P5.2}
	Let $\Theta$ be a solution to \eqref{E-1-1} with initial data satisfying the following conditions:
	\[  0<\theta_{i}^{\text{in}}<2\pi, \quad i \in {\mathbb N}. \]
	Then, there exists a constant state $\Theta^{\infty} = \{ \theta_i^{\infty} \}$ such that 
	\[ \lim_{t \to \infty} \theta_i(t)  = \theta_i^{\infty}, \quad  i \in {\mathbb N}. \]
\end{proposition}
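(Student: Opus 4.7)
The plan is to combine Theorem~\ref{T5.1}(2), which identifies the limit of every pairwise difference, with the conservation law \eqref{E-9-1} and the order-preservation Lemma~\ref{L5.2} to pin down each individual phase. The logical skeleton is: (i) pairwise differences $\theta_i-\theta_1$ converge; (ii) the weighted mean $\sum_k\kappa_k\theta_k$ is conserved; (iii) these two facts together determine $\lim\theta_1(t)$ and hence $\lim\theta_i(t)$ for every $i$.

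First, I would dispose of the degenerate case $r(0)=0$. In this case $\sum_k\kappa_k\cos\theta_k^{\text{in}}=\sum_k\kappa_k\sin\theta_k^{\text{in}}=0$, which forces $\dot{\theta}_i(0)=\sum_k\kappa_k\sin(\theta_k^{\text{in}}-\theta_i^{\text{in}})=0$ for every $i$. The trivial flow $\theta_i(t)\equiv\theta_i^{\text{in}}$ keeps $r\equiv 0$ and so satisfies \eqref{E-1-1}; by the uniqueness part of Proposition~\ref{P2.2} it is the solution, and convergence is immediate.

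For the main case $r(0)>0$, Theorem~\ref{T5.1}(2) produces integers $n_{i1}\in\mathbb{Z}$ with $\theta_i(t)-\theta_1(t)\to n_{i1}\pi$ as $t\to\infty$. Using conservation \eqref{E-9-1} together with $\sum_k\kappa_k=1$ from \eqref{E-0}, one rewrites
\begin{equation*}
\theta_1(t)=S-\sum_{k\in\mathbb{N}}\kappa_k\bigl(\theta_k(t)-\theta_1(t)\bigr),\qquad S:=\sum_{k\in\mathbb{N}}\kappa_k\theta_k^{\text{in}}.
\end{equation*}
The hypothesis $0<\theta_k^{\text{in}}<2\pi$ combined with Lemma~\ref{L5.2} applied to every pair $(k,1)$ gives the uniform bound $|\theta_k(t)-\theta_1(t)|\le 2\pi$ for all $k\in\mathbb{N}$ and $t\ge 0$. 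Hence each summand is dominated by $2\pi\kappa_k$, with $\sum_k 2\pi\kappa_k=2\pi<\infty$, so the bounded convergence theorem, applied to the finite measure $\sum_k\kappa_k\delta_k$ on $\mathbb{N}$, permits the exchange
\begin{equation*}
\lim_{t\to\infty}\theta_1(t)=S-\pi\sum_{k\in\mathbb{N}}\kappa_k n_{k1}=:\theta_1^{\infty}.
\end{equation*}
Writing $\theta_i(t)=(\theta_i(t)-\theta_1(t))+\theta_1(t)$ then yields $\theta_i(t)\to n_{i1}\pi+\theta_1^{\infty}=:\theta_i^{\infty}$ for every $i$, which is the desired conclusion.

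The only delicate point is the exchange of limit and infinite sum. It is controlled by the twin ingredients already in hand: the pairwise bound $|\theta_k(t)-\theta_1(t)|\le 2\pi$ from Lemma~\ref{L5.2} (so in particular $|n_{k1}|\le 2$, making the residual series $\sum_k\kappa_k n_{k1}$ absolutely convergent) and the normalization $\sum_k\kappa_k=1$ from the sender-network structure \eqref{E-0}. Without the order-preservation the differences $\theta_k-\theta_1$ could drift unboundedly and the dominated convergence argument would collapse; without the normalization the conservation law could not be solved for $\theta_1(t)$ as above. Both ingredients being present, the argument closes cleanly.
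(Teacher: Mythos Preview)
Your proof is correct and follows essentially the same route as the paper: both use Lemma~\ref{L5.2} to get the uniform bound $|\theta_k(t)-\theta_j(t)|\le 2\pi$, invoke Theorem~\ref{T5.1}(2) for convergence of the differences, and then solve the conservation law \eqref{E-9-1} for a reference phase by exchanging limit and sum. The paper carries out that exchange via an explicit $\varepsilon$-splitting of the series rather than citing dominated convergence, and it does not separate out the $r(0)=0$ case, but these are cosmetic differences.
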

\begin{proof}
	By Theorem \ref{T5.1} and Lemma \ref{L5.2},  one has 
	\begin{align*}
		\begin{aligned}
			& \left|\theta_{i}(t)-\theta_j(t)\right|\le2\pi, \quad \forall~i,j\in\mathbb{N} \quad \mbox{and} \\
			& \exists~\theta_{ij}^{\infty} \in (-2\pi, 2\pi)\quad \mbox{such that} \quad \lim_{t \to \infty} (\theta_i(t) - \theta_j(t)) = \theta_{ij}^{\infty}. 
		\end{aligned}
	\end{align*}
	On the other hand, note that 
	\begin{equation}\label{E-12}
		{
			\left(\sum_{i\in\mathbb{N}} \kappa_{i}\theta_{i}^{\text{in}}\right)-\theta_j\left(t\right)=
			\left(\sum_{i\in\mathbb{N}} \kappa_{i}\theta_{i}\left(t\right)\right)-\theta_j\left(t\right)=\sum_{i\in\mathbb{N}} \kappa_{i}\left(\theta_{i}\left(t\right)-\theta_j\left(t\right)\right).}
	\end{equation}
	Next, we show that the R.H.S. of \eqref{E-12} converges as $t \to \infty$. More precisely, we claim
	\begin{equation} \label{E-13}
		\lim_{t\to\infty}\sum_{i\in\mathbb{N}} \kappa_{i}\left(\theta_{i}\left(t\right)-\theta_j\left(t\right)\right)=\sum_{i\in\mathbb{N}} \kappa_{i}\theta_{ij}^{\infty}.
	\end{equation}
	{\it Proof of \eqref{E-13}}:~Since { $\displaystyle \sum_{i=1}^{\infty} \kappa_i = 1$}, for any $\varepsilon > 0$,  there exists a $n_{\varepsilon}\in\mathbb{N}$
	such that
	\begin{equation} \label{E-14}
		\sum_{i\ge n_{\varepsilon}} \kappa_{i}< \frac{\varepsilon}{4\pi}. 
	\end{equation} 
	For $i<n_{\varepsilon}$, we can choose $t_{\varepsilon}$ such that 
	\begin{equation} \label{E-15}
		t > t_\varepsilon \quad \Longrightarrow \quad \left|\theta_{i}\left(t\right)-\theta_j\left(t\right)-\theta_{ij}^{\infty}\right|< \frac{\varepsilon}{2}.
	\end{equation}
	Now, we use \eqref{E-14} and \eqref{E-15} to obtain
	\begin{align*}
		& \left|\sum_{i\in\mathbb{N}} \kappa_{i}\left(\theta_{i}\left(t\right)-\theta_j\left(t\right)\right)-{\sum_{i\in\mathbb{N}} \kappa_{i}\theta_{ij}^{\infty}}\right|\\
		& \hspace{1cm} \le\sum_{i<n_{\varepsilon}}\left|\kappa_{i}\left(\theta_{i}\left(t\right)-\theta_j\left(t\right)\right)- \kappa_{i}\theta_{ij}^{\infty}\right|+\sum_{i\ge n_{\varepsilon}}\left|\kappa_{i}\left(\theta_{i}\left(t\right)-\theta_j\left(t\right)\right)- \kappa_{i}\theta_{ij}^{\infty}\right|\\
		&\hspace{1cm} \le\sum_{i<n_{\varepsilon}} \kappa_{i} \frac{\varepsilon}{2} +\sum_{i\ge n_{\varepsilon}} \kappa_{i}\cdot 2\pi  \le\frac{\varepsilon}{2}+\frac{\varepsilon}{2}=\varepsilon
	\end{align*}
	for $t>t_{\varepsilon}$. Hence we verified \eqref{E-13}. Finally, it follows from \eqref{E-12} and \eqref{E-13} that 
	\[ \lim_{t \to \infty}  \theta_j(t) = \sum_{i\in\mathbb{N}} \kappa_{i}\theta_{i}^{\text{in}} -\sum_{i\in\mathbb{N}} \kappa_{i}\theta_{ij}^{\infty} =: \theta_j^{\infty}, \quad j \in {\mathbb N}.  \]
\end{proof}

\vspace{0.2cm}

\noindent $\bullet$~(Constant of motion II): As a second choice for the constant of motion, we consider a cross-ratio-like quantity for four distinct points on the unit circle. Before we discuss the second constant of motion, we recall the complex lifting of the Kuramoto model in \eqref{E-1-2}.  For this, we set a point on the unit circle associated with the phase $\theta_i$:
\[
z_i = e^{{\mathrm i} \theta_i}, \quad i \in {\mathbb N}. 
\]
Then, it is easy to check that the Kuramoto model $\eqref{E-1}_1$ can cast as follows.
\begin{equation} \label{E-1-2}
	\dot{z}_{i}=i\nu_{i}z_{i}+{\frac{1}{2}\sum_{j\in\mathbb{N}} \kappa_j\left(z_{j}-\left\langle z_{j},z_{i}\right\rangle z_{i}\right),   \quad \mbox{where $\left\langle z_{j},z_{i}\right\rangle =\bar{z}_{j}z_{i}$. }}
\end{equation}
We set 
\[  \omega(t) = \sum_{n=1}^{\infty} \kappa_n z_n(t). \]
\begin{lemma}	 \label{L5.3}
	Let $\{z_i \}$ be a solution to \eqref{E-1-2} such that 
	\[ z_i \neq z_j, \quad \forall~i \neq j, \quad \nu_i \equiv 0, \quad i \in {\mathbb N}. \]
	Then, we have the following relations: for any $i \neq j\in\mathbb{N}$,
	\begin{equation*} \label{E-1-3}
		\frac{d}{dt}\left(z_{i}-z_{j}\right)  =-\frac{1}{2}\overline{\omega} \left(z_{i}^{2}-z_{j}^{2}\right), \quad  
		\frac{d}{dt}\left(\frac{1}{z_{i}-z_{j}}\right) = \frac{{\bar \omega}}{2}\frac{\left(z_{i}^{2}-z_{j}^{2}\right)}{\left(z_{i}-z_{j}\right)^{2}}. 
	\end{equation*}	
\end{lemma}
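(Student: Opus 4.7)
The plan is to obtain both identities by direct computation from the complex lifted equation \eqref{E-1-2}, exploiting the fact that the weighted infinite sum $\omega=\sum_{n\in\mathbb{N}}\kappa_n z_n$ already incorporates both the order parameter and its conjugate in a clean way.

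First, I would substitute $\nu_i \equiv 0$ into \eqref{E-1-2} and expand the inner product $\langle z_j, z_i\rangle = \bar z_j z_i$ to get
\[
\dot z_i \;=\; \frac{1}{2}\sum_{j\in\mathbb{N}}\kappa_j\bigl(z_j - \bar z_j z_i^{\,2}\bigr).
\]
Since each $\kappa_j$ is a nonneg\-ative real and the sequence $(\kappa_j)$ lies in $\ell^1$, the sum splits and the conjugate commutes with the sum:
\[
\sum_{j\in\mathbb{N}}\kappa_j z_j = \omega, \qquad \sum_{j\in\mathbb{N}}\kappa_j \bar z_j = \overline{\omega}.
\]
Factoring $z_i^{\,2}$ out of the second sum (it does not depend on $j$) collapses the ODE to the compact form
\[
\dot z_i \;=\; \tfrac{1}{2}\bigl(\omega - \overline{\omega}\, z_i^{\,2}\bigr).
\]

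Next, subtracting the analogous expression for $\dot z_j$ cancels the $\omega$-term and produces the first identity at once:
\[
\frac{d}{dt}(z_i - z_j) \;=\; -\frac{\overline{\omega}}{2}\bigl(z_i^{\,2} - z_j^{\,2}\bigr).
\]
For the second identity, the hypothesis $z_i \neq z_j$ guarantees that $1/(z_i-z_j)$ is a well-defined smooth function of $t$ (note that by Lemma \ref{L5.2} non-collision is preserved in time, so the reciprocal persists along the flow). A one-line chain rule gives
\[
\frac{d}{dt}\!\left(\frac{1}{z_i - z_j}\right) \;=\; -\frac{\dot z_i - \dot z_j}{(z_i - z_j)^{2}} \;=\; \frac{\overline{\omega}}{2}\cdot\frac{z_i^{\,2} - z_j^{\,2}}{(z_i - z_j)^{2}}.
\]

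There is no serious obstacle here; the entire proof is algebraic manipulation. The only items that require a brief justification are the interchange of conjugation with the infinite sum (immediate from $\kappa_j\in\mathbb{R}$ and $\|(\kappa_j)\|_1<\infty$, so the series converges absolutely and its conjugate sums to $\overline{\omega}$) and the smoothness of $z_i(t)$ (already guaranteed by Proposition \ref{P2.2} through $\theta_i = -\mathrm{i}\log z_i$).
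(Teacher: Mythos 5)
Your proposal is correct and follows essentially the same route as the paper: rewrite \eqref{E-1-2} with $\nu_i\equiv 0$ in the compact form $\dot z_i=\tfrac12(\omega-\bar\omega z_i^2)$, subtract to obtain the first identity, and apply the chain rule for the reciprocal to obtain the second. Your added remarks on interchanging conjugation with the absolutely convergent sum and on non-collision persisting in time are harmless elaborations of steps the paper treats as immediate.
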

\begin{proof} Note that \eqref{E-1-2} can be rewritten as 
	\[ {\dot z}_i = \frac{1}{2} \Big (\omega - {\bar \omega} z_i^2 \Big ).  \]
	This yields the desired estimates:
	\[ {\dot z}_i - {\dot z}_j = -\frac{{\bar \omega}}{2} (z_i^2 - z_j^2), \quad \frac{d}{dt}\left(\frac{1}{z_{i}-z_{j}}\right) = -\frac{1}{(z_i - z_j)^2} \frac{d}{dt} ({\dot z}_i - {\dot z}_j) = \frac{{\bar \omega}}{2} 
	\frac{z_i^2 - z_j^2}{(z_i - z_j)^2}.   \]
\end{proof}	
For $\{z_i :=  e^{\rm{i}\theta_i}\}_{i\in\mathbb{N}}$, we define a cross ratio-like functional ${\mathcal C}_{ijkl}$ as 
\begin{equation*} \label{E-16}
	{\mathcal C}_{ijkl}:=\frac{z_{i}-z_{k}}{z_{i}-z_{j}}\cdot\frac{z_{j}-z_{l}}{z_{k}-z_{l}}.
\end{equation*}
\begin{proposition} \label{P5.3}
	Let $\Theta$ be a solution to \eqref{E-1-1} with non-collisional initial data:
	\[ \theta_i^{\text{in}} \neq \theta_j^{\text{in}} \quad \mbox{for $i \neq j$}. \]
	Then, for any four-tuple $(i,j, k, l) \in {\mathbb N}^4$, ${\mathcal C}_{ijkl}$ is well-defined for all $t> 0$ and constant:
	\[ {\mathcal C}_{ijkl}(t) = {\mathcal C}_{ijkl}(0), \quad t > 0. \]
\end{proposition}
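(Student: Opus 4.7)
The plan is to differentiate $\mathcal{C}_{ijkl}$ using Lemma~\ref{L5.3} and to verify that its logarithmic derivative telescopes identically to zero. Before computing, I would dispose of well-definedness. Since the initial phases are pairwise distinct, Lemma~\ref{L5.2} applied to each pair of indices forces $z_m(t)\neq z_n(t)$ for every $m\neq n$ and every $t>0$, so that every denominator $z_i-z_j$ and $z_k-z_l$ appearing in $\mathcal{C}_{ijkl}$ stays nonzero along the flow, and $\mathcal{C}_{ijkl}(t)$ is defined for all $t>0$.

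Next, I would rewrite the first identity of Lemma~\ref{L5.3} in a factorized form. Using $z_a^2-z_b^2=(z_a-z_b)(z_a+z_b)$, the relation $\tfrac{d}{dt}(z_a-z_b)=-\tfrac{\bar{\omega}}{2}(z_a^2-z_b^2)$ becomes
\[
\frac{1}{z_a-z_b}\,\frac{d}{dt}(z_a-z_b)=-\frac{\bar{\omega}}{2}(z_a+z_b), \qquad a\neq b.
\]
Writing $\mathcal{C}_{ijkl}=(z_i-z_k)(z_j-z_l)(z_i-z_j)^{-1}(z_k-z_l)^{-1}$ and applying the product/quotient rule termwise, one obtains
\[
\frac{\dot{\mathcal{C}}_{ijkl}}{\mathcal{C}_{ijkl}}=-\frac{\bar{\omega}}{2}\Bigl[(z_i+z_k)+(z_j+z_l)-(z_i+z_j)-(z_k+z_l)\Bigr]=0,
\]
since the bracketed sum cancels termwise. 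Hence $\mathcal{C}_{ijkl}(t)\equiv\mathcal{C}_{ijkl}(0)$ for every $t>0$.

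No serious obstacle is expected: once Lemma~\ref{L5.3} is in hand, the invariance is an algebraic consequence of the elementary factorization $z_a^2-z_b^2=(z_a-z_b)(z_a+z_b)$, and the only delicate point (non-collision of the $z_a$'s on the unit circle) is already supplied by Lemma~\ref{L5.2}. Conceptually, this M\"obius-type conservation law reflects the fact that the planar dynamics $\dot{z}_i=\tfrac{1}{2}(\omega-\bar{\omega}z_i^2)$ has the form of a projective flow on $\mathbb{C}$, for which cross-ratios are the natural invariants; the sender structure $\kappa_{ij}=\kappa_j$ is essential because it is what makes the driving coefficients $\omega$ and $\bar{\omega}$ common to every index~$i$.
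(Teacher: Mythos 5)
Your computation of the invariance is exactly the paper's: using Lemma \ref{L5.3}, equivalently the factorization $z_a^2-z_b^2=(z_a-z_b)(z_a+z_b)$, the logarithmic derivative of $\mathcal{C}_{ijkl}$ telescopes to $-\tfrac{\overline{\omega}}{2}\bigl[(z_i+z_k)+(z_j+z_l)-(z_i+z_j)-(z_k+z_l)\bigr]=0$, which is precisely the cancellation in the paper's proof of constancy on any interval where $\mathcal{C}_{ijkl}$ is defined.

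The gap is in the well-definedness step. Lemma \ref{L5.2} does not assert $z_m(t)\neq z_n(t)$: its conclusion is the weak two-sided bound $\theta_j(t)\le\theta_i(t)\le\theta_j(t)+2\pi$, and both endpoints ($\theta_i-\theta_j=0$ or $2\pi$) correspond to $z_i=z_j$, i.e.\ to a vanishing factor in $\mathcal{C}_{ijkl}$. Even the proof of that lemma only excludes a first collision $\theta_i=\theta_j$; it says nothing about the wrap-around $\theta_i-\theta_j$ reaching $2\pi$, which is equally fatal for the cross-ratio. So, as cited, Lemma \ref{L5.2} does not deliver the claim that all denominators stay nonzero for all $t>0$. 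The paper instead handles well-definedness by a continuation argument: it sets $\tau^*:=\sup\{\tau: \mathcal{C}_{ijkl} \mbox{ is well-defined on } (0,\tau)\}$, uses the computation above to get constancy on $(0,\tau^*)$, and extends past $\tau^*$ by continuity to conclude $\tau^*=\infty$. Your route can be repaired more directly: every $z_i$ solves the same scalar equation $\dot z=\tfrac12\bigl(\omega(t)-\overline{\omega(t)}\,z^2\bigr)$ with the common coefficient $\omega(t)=\sum_{n}\kappa_n z_n(t)$, so forward and backward uniqueness for this Lipschitz non-autonomous ODE shows that two trajectories which coincide at any time coincide identically; since the non-collisional hypothesis means $z_i(0)\neq z_j(0)$ for $i\neq j$ (distinctness of the points $e^{\mathrm{i}\theta_i^{\text{in}}}$, as the paper implicitly reads it), the points never meet and $\mathcal{C}_{ijkl}$ is globally well-defined. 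With that substitution your argument is complete and equivalent to the paper's.
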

\begin{proof}
	Since all points $\{ e^{{\mathrm i} \theta_i} \}$ are distinct, ${\mathcal C}_{ijkl}$ is well-defined at $t = 0$. Moreover, by the continuity of solution, there exists $\eta > 0$ such that  for $i \neq j$, 
	\[ \theta_i(t) \neq \theta_j(t) \quad t \in (0, \eta). \]
	Thus, the cross-ratio like functional ${\mathcal C}_{ijkl}$ is well-defined in the time interval $(0, \eta)$. Now we introduce a temporal set ${\mathcal T}$ and its supremum $\tau^*$ :
	\[ {\mathcal T}:= \{ \tau \in (0, \infty)~:~{\mathcal C}_{ijkl}~\mbox{is well-defined in the time interval $(0, \tau)$} \}, \quad \tau^* := \mbox{sup} {\mathcal T}.  \]
	Then, the set ${\mathcal T}$ is not empty and $\tau^{*} \in (0, \infty]$.  In what follows, we show that 
	\begin{equation*} \label{E-17}
		\tau^* = \infty \quad \mbox{and} \quad  {\mathcal C}_{ijkl}(t) = {\mathcal C}_{ijkl}(0), \quad t > 0. 
	\end{equation*}
	Suppose that the contrary holds, not, i.e., 
	\[ \tau^* < \infty. \]
	First, we show that the functional ${\mathcal C}_{ijkl}$ is constant in the interval $(0, \tau^*)$. For this,  we use Lemma \ref{L5.3} to get
	\begin{align*}
		\begin{aligned}
			\frac{d}{dt} {\mathcal C}_{ijkl}(t) &=-\left(\frac{1}{2}\overline{\omega}(t)\left(z_{i}(t)+z_{k}(t)\right)+\frac{1}{2}\overline{\omega}(t)\left(z_{j}(t)+z_{l}(t)\right)\right)C_{ijkl}(t)\\
			&\phantom{=}+\left(\frac{1}{2}\overline{\omega}(t)\left(z_{i}(t)+z_{j}(t)\right)+\frac{1}{2}\overline{\omega}(t)\left(z_{k}(t)+z_{l}(t)\right)\right)C_{ijkl}(t)\\
			& =0, \qquad t \in (0, \tau^*).
		\end{aligned}
	\end{align*}
	Thus, as long as  ${\mathcal C}_{ijkl}$ is well-defined, it is constant.  Certainly, it is continuous with respect its arguments. Therefore, 
	\[ \exists~{\mathcal C}_{ijkl}(\tau^*) = \lim_{t \to \tau^*-} {\mathcal C}_{ijkl}(t). \]
	By continuity,  there exists a $\delta > 0$ such that 
	\[ {\mathcal C}_{ijkl}(\cdot)~\mbox{is well-defined in the time-interval $[0, \tau^* + \delta)$} \]
	which is contradictory to the choice of $\tau^*$. Therefore we have
	\[ \tau^* = \infty, \]
	i.e.,  ${\mathcal C}_{ijkl}(\cdot)$ is well-defined on the whole time interval $[0, \infty)$ and  
	\[ {\mathcal C}_{ijkl}(t) =  {\mathcal C}_{ijkl}(0).  \quad t \in (0, \infty). \]

\end{proof}
As a direct corollary of Proposition \ref{P5.3}, we have the following results on the asymptotic configurations of the set $\{ e^{{\mathrm i} \theta_i} \}$ and $\{ \theta_i \}$. First, we will see that asymptotic configuration of  $\{ e^{{\mathrm i} \theta_i} \}$ is either a singleton or bi-polar configuration. 
\begin{corollary} \label{C5.1}
	Let $\{z_i \}$ be a solution to \eqref{E-1-2} with asymptotic configuration $\{z_i^{\infty} \}$. Then, the following dichotomy holds.
	\begin{enumerate}
		\item There exists a $k\in\mathbb{N}$ such that $ z_{i}^{\infty}=-z_{k}^{\infty}$ for $i\in\mathbb{N}\setminus\{ k\}$.
		\vspace{0.2cm}
		\item $z_{i}^{\infty}=z_{j}^{\infty}$ for all $i, j \in {\mathbb N}$. 
	\end{enumerate}
\end{corollary}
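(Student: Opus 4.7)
The plan is to combine the pairwise asymptotic phase-locking of Theorem~\ref{T5.1}(ii), which confines the limit set $\{z_i^\infty\}$ to an antipodal pair on the unit circle, with the conservation of the cross-ratio-like quantity ${\mathcal C}_{ijkl}$ from Proposition~\ref{P5.3}, which rules out the limit configuration splitting into two clusters each of size at least two.

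First, working in the generic case $r(0)>0$ so that Theorem~\ref{T5.1}(ii) applies, for every pair $(i,j)$ there is an integer $n_{ij}$ with $\theta_i(t)-\theta_j(t)\to n_{ij}\pi$. Exponentiating yields $z_i^\infty=(-1)^{n_{ij}}z_j^\infty$, so fixing a reference index and setting $w:=z_1^\infty$ on the unit circle, each $z_i^\infty\in\{w,-w\}$. Accordingly, partition the index set as
\[
I_+:=\{i\in\mathbb{N}:z_i^\infty=w\},\qquad I_-:=\{i\in\mathbb{N}:z_i^\infty=-w\}.
\]
The dichotomy (1)--(2) is precisely the statement $\min(|I_+|,|I_-|)\leq 1$: case (2) corresponds to one of these sets being empty, while case (1) corresponds to one of them being a singleton.

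To establish $\min(|I_+|,|I_-|)\leq 1$, I argue by contradiction. Suppose $|I_+|\geq 2$ and $|I_-|\geq 2$, and pick distinct $i,j\in I_+$ and distinct $k,l\in I_-$. By the noncollisional hypothesis, the initial value ${\mathcal C}_{ijkl}(0)$ is a finite complex number, and Proposition~\ref{P5.3} makes ${\mathcal C}_{ijkl}(t)$ constant in time. However, as $t\to\infty$ the denominators satisfy $z_i(t)-z_j(t)\to 0$ and $z_k(t)-z_l(t)\to 0$, while the numerators $z_i(t)-z_k(t)\to 2w$ and $z_j(t)-z_l(t)\to 2w$ remain bounded away from zero. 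Thus $|{\mathcal C}_{ijkl}(t)|\to\infty$, contradicting its conservation.

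The main subtlety to address is checking that the hypotheses of Theorem~\ref{T5.1}(ii) (namely $r(0)>0$) and of Proposition~\ref{P5.3} (noncollisional initial data) are implicit in the corollary's assumption that an asymptotic configuration $\{z_i^\infty\}$ is actually attained; in the degenerate stationary case $r(0)=0$ the flow is frozen at the initial configuration, so unless the initial datum already has the required form the dichotomy fails and this case must be read out of the hypothesis. Once this is granted, the argument contains no further technical obstacle: the entire content is the blow-up estimate for ${\mathcal C}_{ijkl}$ that forbids two nontrivial clusters from coexisting.
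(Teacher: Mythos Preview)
Your proposal is correct and follows essentially the same route as the paper: reduce the asymptotic configuration to an antipodal pair via Theorem~\ref{T5.1}(ii), partition the indices into two clusters, and use the blow-up of the conserved cross-ratio ${\mathcal C}_{ijkl}$ from Proposition~\ref{P5.3} to rule out both clusters having size at least two. Your explicit discussion of the degenerate case $r(0)=0$ and the noncollisional hypothesis is a useful clarification that the paper leaves implicit.
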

\begin{proof}
	Suppose that there exists a $1 \neq k\in\mathbb{N}$ such that
	\[  z_1^\infty\neq z_k^\infty. \]
	By { Theorem \ref{T5.1}} and Proposition \ref{P5.2}, $\theta_i^\infty - \theta_k^\infty$ is an integer multiple of $\pi$, which implies $z_1^\infty =  -z_k^\infty$. Then we set a partition $I_{1}\cup I_{2}$ of $\mathbb{N}$ by
	\[
	I_{1}  :=\left\{ i \in {\mathbb N} \ |\ z_{i}\rightarrow z_{1}^{\infty}\text{ as }t\rightarrow\infty\right\}, \quad  I_{2} :=\left\{ i \in {\mathbb N}\ |\ z_{i}\rightarrow-z_{1}^{\infty}\text{ as }t\rightarrow\infty\right\}. \]
	Suppose that 
	\[ |I_1| \geq 2 \quad \mbox{and} \quad  |I_2| \geq 2. \]
	Then, we can choose 
	\[ i \neq j\in I_{1} \quad \mbox{and} \quad  k \neq l \in I_2. \]
	For such pairs $(i,j)$ and $(k,l)$, 
	\[ \lim_{t \to \infty} |{\mathcal C}_{ijkl}(t) |=\lim_{t \to \infty} \Big| \frac{z_{i}(t)-z_{k}(t)}{z_{i}(t)-z_{j}(t)}\cdot\frac{z_{j}(t)-z_{l}(t)}{z_{k}(t)-z_{l}(t)}  \Big| =  \infty, \]
	which is contradictory to the constancy of ${\mathcal C}_{ijkl}$:
	\[ {\mathcal C}_{ijkl}(t) = {\mathcal C}_{ijkl}(0), \quad t > 0. \]
	Therefore, we have
	\[ |I_1| \leq 1 \quad \mbox{and} \quad  |I_2|  \leq 1. \]
	Without loss of generality, we may assume $I_1\leq 1$. Then there are two cases: \newline
	
	\noindent If $|I_1| = 0$, then asymptotic state is in complete phase synchrony:
	\[ \lim_{t \to \infty}  z_i(t)  = z_1^{\infty}, \quad i \geq 2. \]
	If $|I_1| = 1$, then we have bi-polar asymptotic state:
	\[ \lim_{t \to \infty} z_i(t) = -z_1^{\infty}, \quad i \geq 2. \]
\end{proof}
Now we are ready to study the asymptotic configuration of \eqref{E-1-1} that emerges from the given initial configuration $\{ \theta_i^{\text{in}} \}$.  For a given initial configuration  $\{ \theta_i^{\text{in}} \}$, we set
\begin{equation} \label{E-18}
	\theta_0 :=  \sum_{i\in\mathbb{N}} \kappa_{i}\theta_{i}^{\text{in}}.
\end{equation}
Then, it follows from \eqref{E-9-1} that 
\begin{equation} \label{E-19}
	\theta_0 = \sum_{i\in\mathbb{N}} \kappa_{i}\theta_{i}(t), \quad t > 0.
\end{equation}
\begin{corollary}  \label{C5.2}
	Let $\Theta$ be a solution to \eqref{E-1-1} with asymptotic configuration $\{\theta_i^{\infty} \}$:
	\[ \lim_{t \to \infty} \theta_i(t) = \theta_i^{\infty}, \quad   i \in {\mathbb N}. \]
	Then, for each $i\in\mathbb{N}$, 
	\[
	\theta_{i}^{\infty}\in\left\{ \theta_{0}\right\} \cup\left\{ \theta_{0}\pm \kappa_{i}\pi\ |\ i\in\mathbb{N}\right\} \cup\left\{ \theta_{0}\pm\left(1-\kappa_{i}\right)\pi\ |\ i\in\mathbb{N}\right\} .
	\]
\end{corollary}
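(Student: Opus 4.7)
The plan is to combine the two-cluster dichotomy of Corollary \ref{C5.1} with the conservation law \eqref{E-19}, which provides the only nontrivial constraint linking the asymptotic phases back to $\theta_0 = \sum_{i\in\mathbb{N}} \kappa_i \theta_i^{\text{in}}$. The first step is to pass \eqref{E-19} to the limit, namely to establish
\[
\theta_0 \;=\; \sum_{i\in\mathbb{N}} \kappa_i \theta_i^\infty.
\]
I would justify this interchange of sum and limit by repeating the $\varepsilon/2$-truncation argument from the proof of Proposition \ref{P5.2}: pick $n_\varepsilon$ with $\sum_{i\ge n_\varepsilon}\kappa_i<\varepsilon/(4\pi)$, control the tail uniformly in $t$ using the $2\pi$-bound from Lemma \ref{L5.2}, and use pointwise convergence on the finite head.

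With the limiting identity in hand, I would invoke Corollary \ref{C5.1}. In its case (ii), all $z_i^\infty$ coincide; together with Theorem \ref{T5.1}(ii) (every pair difference $\theta_i^\infty-\theta_j^\infty$ is an integer multiple of $\pi$) and Lemma \ref{L5.2} (differences are bounded by $2\pi$), an even-multiple parity argument forces $\theta_i^\infty \equiv \theta^\infty$ for a common $\theta^\infty$. The limiting conservation law then collapses to $\theta^\infty \sum_{i}\kappa_i = \theta_0$, i.e., $\theta_i^\infty = \theta_0$ for every $i$, placing every asymptotic phase in the first set $\{\theta_0\}$.

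The main algebraic step is case (i) of Corollary \ref{C5.1}: there exists a distinguished index $k$ with $z_i^\infty = -z_k^\infty$ for all $i\neq k$. The same parity consideration gives $\theta_i^\infty = \theta^\infty$ for every $i\neq k$ and $\theta_k^\infty = \theta^\infty \pm \pi$ for some common $\theta^\infty$. Plugging into
\[
\theta_0 \;=\; \kappa_k \theta_k^\infty + (1-\kappa_k)\theta^\infty
\]
and solving yields $\theta^\infty = \theta_0 \mp \kappa_k\pi$ and $\theta_k^\infty = \theta_0 \pm (1-\kappa_k)\pi$. Hence, for $i \neq k$ the asymptotic phase $\theta_i^\infty = \theta_0 \mp \kappa_k\pi$ belongs to the second set, while $\theta_k^\infty = \theta_0 \pm (1-\kappa_k)\pi$ belongs to the third. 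Ranging the distinguished index $k$ over $\mathbb{N}$ then recovers the full inclusion stated in the corollary.

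The obstacle I expect to require most care is the limit-sum interchange, since it is the only non-algebraic step in the argument; however, the $\ell^1$-summability of $\boldsymbol{\kappa}$ combined with the uniform bound from Lemma \ref{L5.2} handles it cleanly, mirroring the technique already used in Proposition \ref{P5.2}. Everything else is essentially bookkeeping once the bipolar structure of Corollary \ref{C5.1} is available.
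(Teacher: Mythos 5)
Your proposal is correct and follows essentially the same route as the paper: invoke the dichotomy of Corollary \ref{C5.1}, pass the conserved weighted sum \eqref{E-19} to the limit, and solve the resulting linear relation $\theta_0=\kappa_k\theta_k^\infty+(1-\kappa_k)\theta^\infty$ in the bipolar case. Your explicit $\varepsilon$-truncation justification of the sum--limit interchange is in fact slightly more careful than the paper, which performs that interchange silently (it is the same argument already used in Proposition \ref{P5.2}).
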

\begin{proof}  It follows from { Corollary \ref{C5.1}} that we have two possible asymptotic configurations:
	\begin{center}
		Complete phase synchrony and bi-polar configuration.
	\end{center}
	\noindent $\bullet$~Case A:~Suppose that 
	\[ \lim_{t \to \infty} \theta_i(t) = \theta_\infty, \quad  \forall~i \in {\mathbb N}. \]
	In this case, we use the above relation, \eqref{E-18}  and \eqref{E-19} to get
	\[
	\theta_{0}=\sum_{i\in\mathbb{N}} \kappa_{i}\theta_{i}(t)= \lim_{t \to \infty}  \sum_{i\in\mathbb{N}} \kappa_{i}\theta_{i}(t) = \sum_{i\in\mathbb{N}} \kappa_{i}\theta_{i}^{\infty}=\sum_{i\in\mathbb{N}} \kappa_{i}\theta_{\infty}=\theta_{\infty}.
	\]
	
	\vspace{0.2cm}
	
	\noindent $\bullet$~Case B:~Suppose that 
	\begin{equation} \label{E-20}
		\lim_{t \to \infty} \theta_j(t) = \theta_{\infty}\pm\pi \quad \mbox{for some $j$}  \quad \mbox{and} \quad  \lim_{t \to \infty} \theta_{i}(t) = \theta_{\infty} \quad \mbox{for all  $i\neq j$}. 
	\end{equation}
	Then, we use the above relations and the same idea as Case A to find
	\[
	\theta_{0}=\sum_{i\in\mathbb{N}} \kappa_{i}\theta_{i}(t)=\sum_{i\in\mathbb{N}} \kappa_{i}\theta_{i}^{\infty}=\sum_{k\in\mathbb{N}} \kappa_{k}\theta_{\infty}\pm \kappa_j\pi=\theta_{\infty}\pm \kappa_j\pi.
	\]
	This and \eqref{E-20} imply
	\[
	\theta_{i}\to\theta_{\infty}=\theta_{0}\mp \kappa_j\pi,\quad \theta_j\rightarrow\theta_{0}\pm\left(1-\kappa_j\right)\pi.
	\]
	Finally, we combine all the results in Case A and Case B to obtain the desired estimate. 
\end{proof}

\subsection{Heterogeneous ensemble} \label{sec:5.2}
In this subsection, we study the frequency synchronization of the heterogeneous ensemble for a restricted class of initial configurations confined in a half circle.  \newline

Note that the Cauchy problem \eqref{E-1} is equivalent to the following Cauchy problem:
\begin{equation}\label{E-21}
	\begin{cases}
		\displaystyle	\dot{\theta}_{i}=\omega_{i},\quad t>0,\quad\forall~ i\in\mathbb{N},\\
		\displaystyle	\dot{\omega}_{i}=\sum_{j\in\mathbb{N}} \kappa_j\cos\left(\theta_i-\theta_j\right)\left(\omega_j-\omega_{i}\right), \\
		\displaystyle	\theta_{i}(0)=\theta_{i}^{\text{in}}\in\mathbb{R},\quad\omega_{i}(0)=\nu_{i}+\sum_{j\in\mathbb{N}} \kappa_j\sin\left(\theta_j^{\text{in}}-\theta_{i}^{\text{in}}\right),
	\end{cases}
\end{equation}
where 
\[ \Theta^{\text{in}} = (\theta_1^{\text{in}}, \theta_2^{\text{in}}, \ldots ) \in \ell^{\infty}, \quad {\mathcal V} = (\nu_1, \nu_2, \ldots) \in \ell^{\infty}. \]
We set 
\[  {\mathcal W} := (\omega_1, \omega_2,  \ldots) \quad \mbox{and} \quad  {\mathcal D}({\mathcal W}) : = \sup_{m, n} |\omega_m - \omega_n|.  \]
Note that for our sender network, \[{\displaystyle \left\Vert K\right\Vert _{\infty,1}=\left\Vert K\right\Vert _{-\infty,1}=\sum_{j\in\mathbb{N}}\kappa_{j}}.\] Here the Theorem \ref{T4.1} can be applied to trap $\Theta(t)$ into a quarter arc.

\begin{proposition}\label{P5.4}
	Suppose that the initial condition $\mathcal{D}(\Theta^{\text{in}})$
	and network topology $\left(\kappa_{ij}\right)$ satisfy
	\[
	0<\mathcal{D}(\mathcal{V})<\|\boldsymbol{\tilde{\kappa}}_{\epsilon}\|_{1}\|K\|_{\infty,1},\quad\mathcal{D}(\Theta^{\text{in}})\in(\gamma,\pi-\gamma),\quad\gamma=\sin^{-1}\left(\frac{\mathcal{D}(\mathcal{V})}{\|\boldsymbol{\tilde{\kappa}}_{\epsilon}\|_{1}\|K\|_{\infty,1}}\right)<\frac{\pi}{2}
	\]
	for 
	\[ \varepsilon\ll 1,\quad
	\tilde{\kappa}_{\varepsilon,i}=\frac{\kappa_{i}}{\left\Vert K\right\Vert _{\infty,1}+\varepsilon},\quad \boldsymbol{\tilde{\kappa}}_{\varepsilon}=\left\{\tilde{\kappa}_{\varepsilon,i}\right\}_{i\in\mathbb{N}}.
	\]
	Then there exists a $t_{0}>0$ such that 
	\[
	\mathcal{D}\left(\Theta(t)\right)<\frac{\pi}{2}-\sin^{-1}\delta,\quad t\ge t_{0}.
	\]
\end{proposition}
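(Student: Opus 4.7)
The plan is to obtain this as an almost immediate corollary of Theorem \ref{T4.1} applied to the sender network, together with a trivial gap argument exploiting the strict inequality $\gamma < \pi/2$. First I would verify that the general framework $(\mathcal{F}1)$--$(\mathcal{F}3)$ specializes cleanly to the sender structure $\kappa_{ij} = \kappa_j$. Since $\sum_{j\in\mathbb{N}} \kappa_{ij} = \sum_{j\in\mathbb{N}} \kappa_j = \|K\|_{\infty,1}$ is independent of $i$, one has $\|K\|_{-\infty,1} = \|K\|_{\infty,1} > 0$, which gives $(\mathcal{F}3)$; hypothesis $(\mathcal{F}1)$ is immediate from $\mathcal{D}(\Theta^{\text{in}}) < \pi-\gamma < \pi$. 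For $(\mathcal{F}2)$, the explicit sequence
\[
\tilde\kappa_{\varepsilon,j} = \frac{\kappa_j}{\|K\|_{\infty,1} + \varepsilon}
\]
supplied in the statement lies in $\ell^1$ with $\|\boldsymbol{\tilde\kappa}_\varepsilon\|_1 = \|K\|_{\infty,1}/(\|K\|_{\infty,1}+\varepsilon) < 1$, and it satisfies the required strict lower bound
\[
\frac{\kappa_{ij}}{\sum_{k\in\mathbb{N}} \kappa_{ik}} = \frac{\kappa_j}{\|K\|_{\infty,1}} > \tilde\kappa_{\varepsilon,j} > 0, \qquad i,j\in\mathbb{N}.
\]

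Next, I would invoke Theorem \ref{T4.1} directly, noting that in the sender case $\|K\|_{-\infty,1}$ and $\|K\|_{\infty,1}$ coincide so that the smallness condition $0 < \mathcal{D}(\mathcal{V}) < \|\boldsymbol{\tilde\kappa}_\varepsilon\|_1 \|K\|_{\infty,1}$ and the definition of $\gamma$ in Proposition \ref{P5.4} match those in Theorem \ref{T4.1} verbatim. Theorem \ref{T4.1} then yields
\[
\limsup_{t\to\infty} \mathcal{D}(\Theta(t)) \le \gamma < \frac{\pi}{2}.
\]
Since $\pi/2 - \gamma > 0$, I may fix any $\delta \in (0, \cos\gamma)$, which is equivalent to $\sin^{-1}\delta < \pi/2 - \gamma$; by the definition of $\limsup$ there then exists a time $t_0 = t_0(\delta) > 0$ such that
\[
\mathcal{D}(\Theta(t)) < \frac{\pi}{2} - \sin^{-1}\delta, \qquad t \ge t_0,
\]
which is the trapping property claimed.

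The only real obstacle is bookkeeping in the first step: the specific constants $\|\boldsymbol{\tilde\kappa}_\varepsilon\|_1$ and $\|K\|_{\infty,1}$ must be lined up with those appearing in Theorem \ref{T4.1}, and the verification of $(\mathcal{F}2)$ requires a \emph{strict} (not merely non-strict) inequality, which is why the denominator $\|K\|_{\infty,1}+\varepsilon$ with $\varepsilon > 0$ is used rather than $\|K\|_{\infty,1}$ itself. No genuinely new dynamical input is needed beyond Theorem \ref{T4.1}; the extra $\sin^{-1}\delta$ of room in the conclusion is purchased entirely by the standing hypothesis $\gamma < \pi/2$.
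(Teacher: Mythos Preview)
Your proposal is correct and matches the paper's approach: the paper's proof is literally the one line ``The conclusion is straightforward from Theorem \ref{T4.1},'' and you have simply filled in the details of why the framework $(\mathcal{F}1)$--$(\mathcal{F}3)$ holds for the sender network and how to extract the $\sin^{-1}\delta$ of room from the strict inequality $\gamma<\pi/2$.
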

\begin{proof}
	The conclusion is straightforward from Theorem \ref{T4.1}.
\end{proof}

Next, we state our last main results on the complete synchronization of \eqref{E-21}. 

\begin{theorem} \label{T5.2}
	Let $(\Theta, {\mathcal W})$ be a solution to \eqref{E-21} with conditions in Proposition \ref{P5.4}. Then, there exists $t_0 \geq 0$ such that 
	\[  {{\mathcal D}(\Theta(t))<\frac{\pi}{2}}, \quad  {\mathcal D}({\mathcal W}(t)) \le {\mathcal D}({\mathcal W}(t_{0})) \cdot\exp\left[ -\frac{3\| A \|_{\infty,1} \log2}{32}\left(t-t_{0}\right)+1\right ],\quad t\ge t_{0}. \]
\end{theorem}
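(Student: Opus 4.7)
The plan is to exploit the quarter-arc trapping established in Proposition \ref{P5.4} to obtain a uniform positive lower bound on the cosines appearing in $\eqref{E-21}_2$, and then run a Cucker--Smale--style contraction argument on the frequency diameter $\mathcal{D}(\mathcal{W})$.

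First, I would invoke Proposition \ref{P5.4} to choose $t_0 \geq 0$ so that $\mathcal{D}(\Theta(t)) < \tfrac{\pi}{2} - \sin^{-1}\delta$ for all $t \geq t_0$. This immediately yields the uniform bound
\[
\cos(\theta_i(t) - \theta_j(t)) \;\geq\; \cos\Bigl(\tfrac{\pi}{2} - \sin^{-1}\delta\Bigr) \;=\; \delta \;>\; 0, \qquad i,j\in\mathbb{N},\ t\geq t_0,
\]
which turns the frequency coupling $\kappa_j \cos(\theta_i-\theta_j)$ into a positive-definite weight bounded below by $\delta\kappa_j$ and above by $\kappa_j$.

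Second, I would track the evolution of the frequency extremals $\overline{\omega}(t):=\sup_i\omega_i(t)$ and $\underline{\omega}(t):=\inf_i \omega_i(t)$. The Lipschitz regularity of these two functionals (proved as in Lemma \ref{L2.2}) guarantees that $\mathcal{D}(\mathcal{W})$ is differentiable almost everywhere. Picking, for each $t\geq t_0$, sequences $\{i_n\},\{j_n\}$ with $\omega_{i_n}(t)\to \overline{\omega}(t)$ and $\omega_{j_n}(t)\to\underline{\omega}(t)$, the inequalities $\omega_k-\omega_{i_n}\leq o(1)$ and $\omega_k-\omega_{j_n}\geq o(1)$ combine with $\cos\geq \delta$ and $\sum_k\kappa_k=1$ to give
\begin{align*}
\dot{\omega}_{i_n} \;&=\; \sum_{k\in\mathbb{N}}\kappa_k \cos(\theta_{i_n}-\theta_k)(\omega_k-\omega_{i_n}) \;\leq\; \delta\sum_{k\in\mathbb{N}}\kappa_k(\omega_k-\omega_{i_n}) + o(1),\\
\dot{\omega}_{j_n} \;&\geq\; \delta\sum_{k\in\mathbb{N}}\kappa_k(\omega_k-\omega_{j_n}) + o(1),
\end{align*}
where the inequalities flip correctly because $\omega_k-\omega_{i_n}\leq 0$ and $\omega_k-\omega_{j_n}\geq 0$ in the respective limits. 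Subtracting and passing to the limit $n\to\infty$ produces the Dini-derivative inequality $D^+ \mathcal{D}(\mathcal{W})(t) \leq -\delta\, \mathcal{D}(\mathcal{W})(t)$ for $t\geq t_0$.

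Third, a Gronwall argument applied to this differential inequality yields exponential decay $\mathcal{D}(\mathcal{W}(t)) \leq \mathcal{D}(\mathcal{W}(t_0))\exp(-\delta(t-t_0))$. The specific rate constant $\tfrac{3\|K\|_{\infty,1}\log 2}{32}$ and the additive factor $+1$ in the exponent should follow by quantitatively relating $\delta$ to the parameters $(\mathcal{D}(\mathcal{V}),\|K\|_{\infty,1},\|\boldsymbol{\tilde\kappa}_\varepsilon\|_1)$ from Proposition \ref{P5.4} and absorbing a dyadic doubling estimate (hence the $\log 2$) that improves the crude $\delta$ to its final form, with the $+1$ absorbing the short transient just after $t_0$.

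The main obstacle is that the supremum and infimum of $\{\omega_i(t)\}_{i\in\mathbb{N}}$ are generally not attained, so one cannot apply the finite-dimensional maximum principle directly; Remark \ref{R3.2} already warns that naive extremal arguments can fail on infinite ensembles. The reason the argument nonetheless goes through here, in contrast to Section \ref{sec:3}, is that the sender structure $\kappa_{ij}=\kappa_j$ keeps the coupling weight felt by every oscillator uniform in $i$ and equal to $\sum_j\kappa_j=1$, so no near-extremal index can decouple from the ensemble; the Dini-derivative/approximating-sequence formalism then suffices to convert the heuristic extremal contraction into a rigorous exponential decay estimate.
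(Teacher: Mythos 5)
Your overall strategy --- trap the phases in a quarter arc, bound the cosines below by $\delta$, and contract the frequency diameter --- is reasonable, but as written it has two genuine gaps. First, the decisive step, passing from $\dot\omega_{i_n}(t)\le \delta\sum_k\kappa_k(\omega_k-\omega_{i_n})+o(1)$ for near-maximizers to $D^+\mathcal{D}(\mathcal{W})(t)\le-\delta\,\mathcal{D}(\mathcal{W})(t)$, is exactly where the infinite ensemble bites, and you only assert it. For an infinite family the upper Dini derivative of $\sup_i\omega_i$ is \emph{not} in general controlled by the limsup of $\dot\omega_{i}(t)$ over near-maximizing indices at the frozen time $t$ (one can build smooth counterexamples where the supremum jumps while every near-maximizer has derivative $0$); what rescues the step here is a uniform-in-$i$ modulus of continuity for $\dot\omega_i$ (equivalently a uniform bound on $\ddot\omega_i$, which does follow from the uniform bounds of Lemma \ref{L2.2} and $\sum_j\kappa_j=1$), and this auxiliary lemma must be stated and proved; Lipschitz continuity and a.e.\ differentiability of $\mathcal{D}(\mathcal{W})$, which is all you invoke, does not suffice. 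Second, even granting the Gronwall step, your argument yields decay at rate $\delta$, whereas the theorem asserts the explicit rate $\tfrac{3\|K\|_{\infty,1}\log 2}{32}$ with prefactor $e$; Proposition \ref{P5.4} only provides \emph{some} $\delta\in(0,1)$, which can be arbitrarily small when $\mathcal{D}(\mathcal{V})$ is close to the admissible threshold, so your bound does not imply the stated one, and the closing remark that the constant ``should follow'' from a dyadic doubling estimate improving $\delta$ is not an argument: in the paper the $\log 2$ and the $+1$ arise from a different mechanism altogether, not from sharpening $\delta$.

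For comparison, the paper's proof (Appendix \ref{App-C}) avoids any Dini derivative of the diameter. After the trapping time it uses the conserved weighted mean to normalize $\sum_j\kappa_j\omega_j=0$, shows that every index with $\omega_i(t_0)>\tfrac34\overline{\omega}(t_0)$ satisfies $\dot\omega_i(t_0)<-\delta\|K\|_{\infty,1}\,\omega_i(t_0)$ (splitting the sum over $J(t_0)=\{j:\omega_j(t_0)\ge\omega_i(t_0)\}$ and its complement), propagates this over a short time window via the uniform bound on $\dot\omega_i$, deduces that $\overline{\omega}$ is nonincreasing, and then runs a dyadic halving iteration on $\overline{\omega}$ (and symmetrically on $\underline{\omega}$); each halving costs a bounded amount of time, and this is precisely where the $\log2$, the stated rate and the factor $e$ come from. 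If you wish to keep your cleaner Gronwall route, you must (i) prove the sup/inf Dini-derivative lemma under uniform equicontinuity of $\dot\omega_i$, and (ii) either settle for a rate depending on $\delta$ or supply the quantitative link between $\delta$ and the stated constant.
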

\begin{proof} Since the proof is very lengthy, we leave its proof in Appendix \ref{App-C}.
\end{proof}

\section{Conclusion}\label{sec:6} 
\setcounter{equation}{0} 
In this paper, we have proposed a generalized synchronization model for the set of the infinite set of Kuramoto oscillators and studied its emergent asymptotic dynamics. The original Kuramoto model describes the synchronous dynamics of a finite set of Kuramoto oscillators, and it has been extensively studied in the last decade, whereas as far as the authors know, the dynamics of an infinite number of Kuramoto oscillators have not been addressed in literature as it is. In fact, for the dynamics of an infinite ensemble,  the Kuramoto-Sakaguchi equation which is corresponding to a mean-field approximation is often used to describe the temporal-phase space dynamics of a one-particle distribution function.  However, this is only an approximation for the dynamics of the infinite set of Kuramoto oscillators. To make sense of the infinite coupling term, we need to introduce a suitable coupling weight. These coupling weights can be realized as a  network topology which is represented by an infinite matrix whose row sums are finite uniformly.  For this set of an infinite number of oscillator equations, some finite-dimensional results and analytical tools can be used for our infinite setting, but there { exists} some fundamental differences.  For example,  the Dini derivative of phase diameter cannot be used, because we cannot estimate how many crossings occur at the end-point of phase space. Similarly, the gradient flow structure of the Kuramoto model cannot be applied even for a symmetric network. We show that there exists a network topology that leads to the constancy of phase diameter (see Corollary \ref{C3.1}).  

For a symmetric network topology and homogeneous ensemble with the same natural frequency, we show that complete synchronization occurs asymptotically (see Theorem \ref{T3.1}), whereas for a heterogeneous ensemble, we cannot show complete synchronization, but we instead obtain a practical synchronization, i.e., we can make the size of phase diameter as small as we want by increasing the size of coupling strength (see Theorem \ref{T4.1}). On the other hand, for a sender network topology in which coupling strength depends on neighboring oscillators, a homogeneous ensemble either evolves toward complete phase synchrony or a special type of bi-cluster configuration (see Theorem \ref{T5.1}). In contrast, for a heterogeneous ensemble, we show that complete synchronization emerges asymptotically (see Theorem \ref{T5.1}). There are several interesting remaining questions. For example, the relation between finite collisions and phase-locking is not clear at all. For the Kuramoto model for a finite ensemble, the aforementioned relations are equivalent. Moreover, we did not show the emergence of complete synchronization for a heterogeneous ensemble in a large coupling regime. We leave these interesting questions as future work. 

\backmatter

\bmhead{Acknowledgments}

The work of S.-Y. Ha was supported by National Research Foundation of Korea (NRF-2020R1A2C3A01003881)

\newpage

\begin{appendices}

\section{Some useful lemmas} \label{App-A}
\setcounter{equation}{0}
In this appendix, we collect some useful results which are used explicitly and implicitly in the main body of the paper without detailed explanation and proofs. Detailed proofs can be found in quoted references and any reasonable book on mathematical analysis, e.g. \cite{Ru}. \newline

First, we begin with the abstract Cauchy problem on a Banach space $(E, \| \cdot \|)$:
\begin{equation} \label{Ap-1}
	\begin{cases}
		\frac{du}{dt}=F\left(u\left(t\right)\right), \quad t > 0, \\
		u(0) =u^{\text{in}}.
	\end{cases}
\end{equation}
\begin{lemma}  \label{LA-1}
	\emph{(Global well-posedness \cite{Bre, C})}
	The following assertions hold.
	\begin{enumerate}
		\item
		\emph{(Existence)}: Let $F:E\rightarrow E$ be a Lipschitz
		map, i.e. there is a nonnegative constant $L$ such that 
		\[
		\left|\left|Fu-Fv\right|\right|\le L\left|\left|u-v\right|\right|\quad \forall~ u,v\in E.
		\]
		Then, for any  given $u^{\text{in}} \in E$, there exists a global solution $u \in {\mathcal C}^{1} ([[0, \infty);E)$ to \eqref{Ap-1}. 
		\vspace{0.2cm}
		\item
		\emph{(Uniqueness)}: For $U\subset E$, let $F:~U\to E$ be a locally Lipschitz map; let $I$ be an interval contained in $\mathbb{R}$
		not necessarily compact. If there are two exact local solutions $\varphi_{1}$
		and $\varphi_{2}$ $:I\to E$ to \eqref{Ap-1}.  Then, they are identical in the entire interval $I$.
	\end{enumerate}	
\end{lemma}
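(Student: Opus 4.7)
\medskip

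The plan is to use the classical Picard iteration / Banach fixed point argument adapted to the Banach space setting, which is standard but should be outlined carefully here.

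For part (i), the global existence, I would proceed as follows. First, fix any $T>0$ and work in the complete metric space $X_T := C([0,T]; E)$ equipped with the weighted sup norm $\|u\|_* := \sup_{t\in[0,T]} e^{-2Lt}\|u(t)\|$, which is equivalent to the usual sup norm and makes $X_T$ complete. Define the Picard operator $\Phi : X_T \to X_T$ by
\[
(\Phi u)(t) := u^{\text{in}} + \int_0^t F(u(s))\, ds, \quad t\in[0,T],
\]
where the integral is a Bochner integral (well-defined since $F\circ u$ is continuous into $E$). For $u,v\in X_T$, the Lipschitz assumption on $F$ gives
\[
\|\Phi u(t) - \Phi v(t)\| \leq L \int_0^t \|u(s)-v(s)\|\, ds \leq L \|u-v\|_* \int_0^t e^{2Ls}\, ds \leq \tfrac{1}{2} e^{2Lt} \|u-v\|_*,
\]
so $\|\Phi u - \Phi v\|_* \leq \tfrac{1}{2}\|u-v\|_*$. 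Hence $\Phi$ is a strict contraction on $X_T$ regardless of the size of $T$, so by the Banach fixed point theorem there exists a unique $u \in X_T$ with $\Phi u = u$. Differentiating the integral equation and using continuity of $F \circ u$, this fixed point is a $C^1$ solution of the Cauchy problem on $[0,T]$. Since $T$ was arbitrary and the fixed points on overlapping intervals agree (by uniqueness on the shorter interval), we obtain a global solution $u \in C^1([0,\infty); E)$.

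For part (ii), the uniqueness for locally Lipschitz $F$ on an open set $U \subset E$, I would use a connectedness argument. Let $\varphi_1, \varphi_2 : I \to E$ be two exact solutions with a common initial value at some $t_0 \in I$, and set
\[
S := \{t \in I : \varphi_1(t) = \varphi_2(t)\}.
\]
The set $S$ is nonempty (it contains $t_0$) and closed in $I$ by continuity of $\varphi_1, \varphi_2$. To show $S$ is also open, fix $t_* \in S$ and let $p := \varphi_1(t_*) = \varphi_2(t_*) \in U$. Choose a closed ball $\overline{B}(p,r) \subset U$ on which $F$ is Lipschitz with constant $L_r$; by continuity of $\varphi_1, \varphi_2$, there exists $\delta>0$ such that $\varphi_i(t) \in \overline{B}(p,r)$ for all $t \in I \cap [t_*-\delta, t_*+\delta]$ and $i=1,2$. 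On this subinterval, subtracting the integral equations yields
\[
\|\varphi_1(t) - \varphi_2(t)\| \leq L_r \Big|\int_{t_*}^t \|\varphi_1(s)-\varphi_2(s)\|\, ds\Big|,
\]
and Gr\"onwall's inequality forces $\varphi_1 \equiv \varphi_2$ on $I\cap[t_*-\delta, t_*+\delta]$, showing $S$ is open. Since $I$ is connected, $S = I$, as desired.

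The only nonroutine point, and the one worth checking with care, is the Bochner-integrability of $F\circ u$ for the fixed-point formulation in part (i), which follows because $u\in C([0,T];E)$ and $F$ is (Lipschitz, hence) continuous, so $F\circ u$ is continuous into the Banach space $E$ and the Bochner integral is well-defined. Everything else is a direct transcription of the finite-dimensional Picard--Lindel\"of proof to a Banach space, relying only on completeness of $E$ and the contraction mapping theorem; no obstacle of substance is expected.
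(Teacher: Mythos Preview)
Your proof is correct and is precisely the standard Picard--Lindel\"of argument carried out in a Banach space; the weighted-norm trick in part~(i) cleanly avoids the usual short-time-plus-continuation step, and the connectedness argument in part~(ii) is the textbook route. Note, however, that the paper does not actually prove this lemma: it is stated in the appendix as a standard result with citations to Brezis and Cartan, and the appendix explicitly says that detailed proofs are to be found in the quoted references. So there is no ``paper's own proof'' to compare against---your argument is essentially what one would find in those references.
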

\begin{remark}
	This lemma has been used to guarantee the global well-posedness of the infinite Kuramoto model on the Banach space $(\ell^\infty,~\| \cdot \|_{\infty})$ in Proposition \ref{P2.2}. 
\end{remark}
Next, we present a differential version of Barbalat's lemma which has been used in the proof of Proposition \ref{P5.1}. 
\begin{lemma} \label{LA-2}
	\emph{(Barbalat \cite{B})} Let  $F: [0, \infty) \rightarrow  \mathbb{R}$ be a continuously differentiable function satisfying the following two properties:
	\[ \exists~\lim_{t \to \infty} F(t) \quad \mbox{and} \quad F^{\prime}~\mbox{is uniformly continuous}. \]
	Then, $F^{\prime}$ tends to zero, as $t \to \infty$. 
\end{lemma}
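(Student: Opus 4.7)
The plan is to argue by contradiction, in the classical spirit of Barbalat's argument. I would begin by assuming that $F'(t)$ does not converge to $0$ as $t \to \infty$, so that there exist a constant $\varepsilon_0 > 0$ and an unbounded increasing sequence $\{t_n\}_{n\in\mathbb{N}}$ with $|F'(t_n)| \geq \varepsilon_0$ for every $n$. After passing to a subsequence, I may further assume that the values $F'(t_n)$ share a common sign for all $n$, and that the points $t_n$ are well separated (say $t_{n+1} > t_n + 1$).

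Next I would invoke the uniform continuity of $F'$: there exists $\delta \in (0,1)$ such that
\[
|F'(s) - F'(t)| < \frac{\varepsilon_0}{2} \quad \text{whenever} \quad |s - t| < \delta.
\]
Combined with $|F'(t_n)| \geq \varepsilon_0$, this forces
\[
|F'(t)| \geq \frac{\varepsilon_0}{2}, \qquad t \in [t_n, t_n + \delta],
\]
and, crucially, the strict inequality $\varepsilon_0/2 < \varepsilon_0$ prevents $F'$ from vanishing on $[t_n, t_n + \delta]$, so $F'$ retains a fixed sign throughout this interval. Consequently, the fundamental theorem of calculus gives the uniform lower bound
\[
\bigl| F(t_n + \delta) - F(t_n) \bigr| = \int_{t_n}^{t_n+\delta} |F'(s)|\, ds \geq \frac{\varepsilon_0 \delta}{2}.
\]

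Finally, since $\lim_{t \to \infty} F(t)$ exists as a finite limit $L$, the Cauchy criterion applied along the sequence $\{t_n\}$ (and the shifted sequence $\{t_n + \delta\}$, both tending to $\infty$) yields
\[
\lim_{n \to \infty} \bigl( F(t_n + \delta) - F(t_n) \bigr) = L - L = 0,
\]
which directly contradicts the lower bound $\varepsilon_0 \delta / 2 > 0$ obtained above. The only subtle point—and the step I would handle with the most care—is the sign-preservation of $F'$ on each interval $[t_n, t_n + \delta]$, which is exactly what allows the absolute value to be pulled inside the integral; this is an immediate consequence of the uniform-continuity modulus $\varepsilon_0/2$ being strictly smaller than the assumed lower bound $\varepsilon_0$ at the endpoints. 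Beyond this, the argument involves no further technical difficulty.
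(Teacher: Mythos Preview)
Your argument is correct and is precisely the standard contradiction proof of Barbalat's lemma. Note, however, that the paper does not actually prove this statement: Lemma~\ref{LA-2} is merely stated in Appendix~\ref{App-A} with a citation to \cite{B}, as part of a collection of background results ``without detailed explanation and proofs,'' so there is no proof in the paper to compare against.
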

\begin{lemma} \label{LA-3}
	\emph{\cite{Ru} } 
	Let $F_{n}$ be a sequence of real-valued differentiable functions on $\left[a,b\right]$ with the following two properties:
	\begin{enumerate}
		\item
		(Pointwise convergence at one-point):  For some $x_0 \in [a, b]$, 
		\[  \exists~ \lim_{n \to \infty} F_n(x_0). \]
		\item
		(Uniform convergence of derivatives):  the sequence $\left\{ F_{n}^{\prime} \right\} $ converges uniformly on $\left[a,b\right]$.
	\end{enumerate}
	Then, $\left\{ F_{n}\right\} $converges uniformly on $\left[a,b\right]$ to a function $F$ and
	\[
	F^{\prime}(x) =\lim_{n\rightarrow\infty}F^{\prime}_{n}(x)\quad\left(a\le x\le b\right).
	\]
\end{lemma}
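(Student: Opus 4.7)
The plan is to follow the classical Rudin two-stage strategy: first extract uniform convergence of $\{F_n\}$ from the uniform Cauchy property of $\{F_n'\}$ combined with the one-point anchor at $x_0$, and then upgrade this to differentiability of the limit together with the identification $F' = \lim F_n'$.

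For the first stage, I would fix $\varepsilon > 0$ and, using uniform convergence of $\{F_n'\}$ together with convergence of $\{F_n(x_0)\}$, choose $N$ so that for all $n,m \geq N$,
\[ \sup_{t\in[a,b]} |F_n'(t) - F_m'(t)| < \varepsilon \quad \text{and} \quad |F_n(x_0) - F_m(x_0)| < \varepsilon. \]
Applying the mean value theorem to the differentiable function $F_n - F_m$ between $x$ and $x_0$, I would obtain
\[ \bigl|(F_n(x) - F_m(x)) - (F_n(x_0) - F_m(x_0))\bigr| \leq \varepsilon\, |x - x_0| \leq \varepsilon(b-a), \]
so that the triangle inequality yields $|F_n(x) - F_m(x)| \leq \varepsilon\bigl(1 + (b-a)\bigr)$ uniformly in $x$. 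Hence $\{F_n\}$ is uniformly Cauchy on $[a,b]$ and, by completeness of $\mathbb{R}$, converges uniformly to some $F : [a,b] \to \mathbb{R}$.

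For the second stage, I would fix an arbitrary $x \in [a,b]$ and introduce the difference quotients
\[ \phi_n(t) := \frac{F_n(t) - F_n(x)}{t - x}, \qquad \phi(t) := \frac{F(t) - F(x)}{t - x} \qquad (t \neq x). \]
A second application of the mean value theorem to $F_n - F_m$ gives
\[ |\phi_n(t) - \phi_m(t)| \leq \sup_{s \in [a,b]} |F_n'(s) - F_m'(s)|, \]
so $\{\phi_n\}$ is uniformly Cauchy on $[a,b]\setminus\{x\}$ and converges uniformly there to its pointwise limit $\phi$. For each fixed $n$, one has $\lim_{t\to x}\phi_n(t) = F_n'(x)$, and by hypothesis these limits themselves converge to some $g(x) := \lim_n F_n'(x)$.

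The main obstacle is then justifying the exchange of iterated limits
\[ F'(x) \;\stackrel{?}{=}\; \lim_{t\to x}\phi(t) \;=\; \lim_{t\to x}\lim_{n\to\infty}\phi_n(t) \;=\; \lim_{n\to\infty}\lim_{t\to x}\phi_n(t) \;=\; \lim_{n\to\infty}F_n'(x). \]
This is precisely the setting of the Moore--Osgood iterated-limit theorem: the uniform convergence $\phi_n \to \phi$ on $[a,b]\setminus\{x\}$ established above, together with the existence of the inner limits $\lim_{t\to x}\phi_n(t) = F_n'(x)$ for every $n$, forces the two outer limits to agree. Invoking this interchange yields that $F$ is differentiable at every $x \in [a,b]$ with $F'(x) = \lim_n F_n'(x)$, which completes the proof.
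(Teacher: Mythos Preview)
Your proof is correct and is exactly the classical argument the paper has in mind: the paper does not give its own proof of this lemma but simply refers to Theorem~7.17 of Rudin, and your two-stage argument (mean value theorem to get uniform Cauchy convergence of $\{F_n\}$, then uniform Cauchy convergence of the difference quotients plus the Moore--Osgood interchange) is precisely Rudin's proof of that theorem.
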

\begin{remark}
	The proof can be found in Theorem 7.17 of \cite{Ru}. 
\end{remark}
Finally, we state the Lojasiewicz gradient inequality  on a Hilbert space  $(H, \langle \cdot, \cdot \rangle)$: We set 
\[ \| u \| := \sqrt{\langle u, u \rangle}, \quad u \in H. \]
\begin{lemma}  \label{LA-4}
	\emph{(Lojasiewicz gradient inequality  \cite{H-J})} 
	For an open neighborhood $U \subset H$ of $0 \in H$, let $F:U \to {\mathbb R}$ 
	be an analytic function such that 
	\[  F(0)=0, \quad  DF(0)=0. \]
	Suppose $F$ satisfies the following two conditions:
	\begin{enumerate}
		\item
		${\mathcal N}:= {\ker}(D^{2}F(0))$ is finite-dimensional.
		\item
		There is $\rho>0$ such that 
		\begin{equation} \label{New-A}
			\| D^{2}F(0)u \| \ge \rho \| u \|, \quad \forall~u\in V\cap N^{\perp},
		\end{equation}
		where ${\mathcal N}^{\perp}$stands for the orthogonal complement of ${\mathcal N}$.
	\end{enumerate}	
	Then there exist $\theta\in\left(0,1/2\right)$, a neighborhood $W$
	of $0$ and $c>0$ such that
	\[
	\| DF(u) \| \ge c \Big| F(u) \Big|^{1-\theta}, \quad \forall~u \in W.
	\]
\end{lemma}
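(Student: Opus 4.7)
The plan is to reduce the infinite-dimensional statement to the classical finite-dimensional Łojasiewicz inequality via a Lyapunov--Schmidt reduction onto the finite-dimensional kernel $\mathcal{N}$.

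First, I would split $H = \mathcal{N} \oplus \mathcal{N}^\perp$ and let $P$, $Q = I - P$ denote the orthogonal projections onto $\mathcal{N}$ and $\mathcal{N}^\perp$. Each $u$ near $0$ is written $u = v + w$ with $v \in \mathcal{N}$, $w \in \mathcal{N}^\perp$. Hypothesis (2) says exactly that the restricted operator $L := Q D^2 F(0)\bigr|_{\mathcal{N}^\perp} : \mathcal{N}^\perp \to \mathcal{N}^\perp$ is a topological isomorphism with $\|L^{-1}\| \leq \rho^{-1}$, which is the ingredient needed to invert the ``$Q$-part'' of the gradient equation.

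Second, apply the analytic implicit function theorem on a Hilbert space to the map $\Phi(v,w) := Q\, DF(v+w)$. Since $\Phi(0,0) = 0$ and $\partial_w \Phi(0,0) = L$ is an isomorphism, there exist neighborhoods $V_0 \subset \mathcal{N}$ and $W_0 \subset \mathcal{N}^\perp$ of $0$ and a unique real-analytic map $w: V_0 \to W_0$ with $w(0) = 0$ solving $Q\, DF(v + w(v)) = 0$. Define the reduced function $g(v) := F(v + w(v))$ on the finite-dimensional open set $V_0 \subset \mathcal{N} \cong \mathbb{R}^{\dim \mathcal{N}}$. Then $g$ is real-analytic with $g(0) = 0$ and $\nabla g(0) = 0$, so the classical finite-dimensional Łojasiewicz inequality yields $\theta \in (0, 1/2)$, a neighborhood $V_1 \subset V_0$ and $c_1 > 0$ with $\|\nabla g(v)\| \geq c_1 |g(v)|^{1-\theta}$ on $V_1$. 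A direct chain-rule computation shows $\nabla g(v) = P\, DF(v + w(v))$, because the term $(Dw(v))^* Q\, DF(v + w(v))$ vanishes by construction of $w(\cdot)$.

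Third, I would transfer the bound to a full $H$-neighborhood of $0$. For general $u = v + w \in W \subset U$, let $u^\star := v + w(v)$ denote its projection onto the constraint manifold. Two quantitative consequences of hypothesis (2), valid on a small enough $W$, are:
\begin{equation*}
\|Q\, DF(u)\| \;\geq\; c_2 \|w - w(v)\|,
\qquad
|F(u) - F(u^\star)| \;\leq\; c_3 \|w - w(v)\|^2,
\end{equation*}
obtained from $Q\,DF(u^\star) = 0$ and first/second-order Taylor expansion of $w \mapsto F(v+w)$ around its critical point $w(v)$. Writing $\|DF(u)\|^2 = \|P\, DF(u)\|^2 + \|Q\, DF(u)\|^2$, controlling $\|P\, DF(u)\| \geq \|P\, DF(u^\star)\| - \mathrm{Lip}(P\,DF)\|w - w(v)\|$, and using $\|P\, DF(u^\star)\| = \|\nabla g(v)\| \geq c_1 |g(v)|^{1-\theta}$ together with $|F(u)|^{1-\theta} \leq |g(v)|^{1-\theta} + C|F(u)-F(u^\star)|^{1-\theta}$ (subadditivity since $1-\theta \leq 1$), one recovers $\|DF(u)\| \geq c\, |F(u)|^{1-\theta}$ on $W$ after possibly shrinking $W$ and adjusting $c$.

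The main obstacle will be the last step: one must balance three competing inequalities --- the reduced Łojasiewicz bound on $g$, the linear elliptic lower bound $\|Q\,DF(u)\| \gtrsim \|w - w(v)\|$, and the quadratic remainder $|F(u) - F(u^\star)| \lesssim \|w - w(v)\|^2$ --- so that whichever of the two summands $\|P\,DF(u)\|$ or $\|Q\,DF(u)\|$ dominates, the sum still controls $|F(u)|^{1-\theta}$. The restriction $\theta \in (0, 1/2)$ is precisely what is needed to absorb the quadratic off-manifold remainder by the linear transverse gradient, and this is where the argument is most delicate. A secondary technicality is verifying the analytic implicit function theorem in the Banach setting (standard but requires the analyticity of $F$ rather than mere $C^\infty$ smoothness, which is where the hypothesis of real-analyticity of $F$ enters).
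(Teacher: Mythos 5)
The paper does not actually prove this lemma: it is quoted verbatim from Haraux--Jendoubi \cite{H-J}, and the appendix explicitly defers all proofs to the cited references. Your proposal is, in substance, a correct reconstruction of the argument in that reference: the orthogonal splitting $H=\mathcal{N}\oplus\mathcal{N}^{\perp}$ with $\dim\mathcal{N}<\infty$, the analytic implicit function theorem applied to $\Phi(v,w)=Q\,DF(v+w)$ (hypothesis (2) is exactly the invertibility of $Q\,D^{2}F(0)|_{\mathcal{N}^{\perp}}$, self-adjointness giving that its range is all of $\mathcal{N}^{\perp}$), the classical finite-dimensional \L{}ojasiewicz inequality applied to the reduced function $g(v)=F(v+w(v))$, and the recombination via $\|Q\,DF(u)\|\gtrsim\|w-w(v)\|$ and $|F(u)-F(u^{\star})|\lesssim\|w-w(v)\|^{2}$. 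Your chain-rule identity $\nabla g(v)=P\,DF(v+w(v))$ is right because $Dw(v)h\in\mathcal{N}^{\perp}$ while $Q\,DF(u^{\star})=0$, and the balancing in the final step does close: since $2(1-\theta)\ge 1$, the term $\|w-w(v)\|^{2(1-\theta)}$ is dominated by $\|w-w(v)\|\lesssim\|Q\,DF(u)\|$ on a small neighborhood, and one may always shrink $\theta$ afterwards to land in the open interval $(0,1/2)$ when $|F(u)|\le 1$. The only caveat is that what you have written is an outline rather than a complete proof (the uniform lower bound $\|Q\,D^{2}F(\xi)\eta\|\ge(\rho-\epsilon)\|\eta\|$ for $\xi$ near $0$ and $\eta\in\mathcal{N}^{\perp}$, needed for the transverse estimate, should be stated), but no step is wrong and nothing essential is missing relative to the source the paper relies on.
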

\begin{remark}
	See the discussions right after the proof of Theorem \ref{T3.1}. 
\end{remark}

\vspace{0.5cm}
%
%
%

\section{Proof of Lemma \ref{L3.1}} \label{App-B}
\setcounter{equation}{0}
In this appendix, we provide a lengthy proof of Lemma \ref{L3.1}. Since $\displaystyle\overline{\theta}(0) = \sup_{i \in {\mathbb N}} \theta_i^{\text {in}}$, the following dichotomy holds: 
\begin{eqnarray*}
	&& (1)~\mbox{$\overline{\theta}(0)$ is an isolated point of the set $\{ \theta_i^{\text {in}}\}_{i \in {\mathbb N}}$,} \\
	&&(2) ~\mbox{$\overline{\theta}(0)$ is a limit point of the set $\{ \theta_i^{\text {in}} \}_{i \in {\mathbb N}}$.}
\end{eqnarray*}

\vspace{0.2cm}

\noindent In the sequel, we show that the desired assertions hold for each case. \newline

\noindent (1)~First of all, suppose  that 
\begin{center}
	$\overline{\theta}(0)$ is not a limit point of the set  $\{ \theta_i^{\text {in}}\}_{i \in {\mathbb N}}$.
\end{center}
In this case,  the index set 
\[ {\mathcal I}_{\overline{\theta}(0)}:=\left\{i\in \mathbb{N}:\theta_i^{\text {in}}= \overline{\theta}(0) \right\}\]
is nonempty (possibly infinite) subset of $\mathbb{N}$, and there exists $\varepsilon>0$ such that 
\[  {\mathcal D}(\Theta^{\text {in}}) + \varepsilon < \pi \quad \mbox{and} \quad  {\mathcal I}_{( \overline{\theta}(0)-\varepsilon,  \overline{\theta}(0))} :=\left\{i \in {\mathbb N}:~\theta_i^{\text {in}}\in ( \overline{\theta}(0)-\varepsilon,  \overline{\theta}(0)) \right\}=\emptyset. \]
\ \\
\noindent $\diamond$~Case A.1:  Let $i \in {\mathbb N}$ be an index such that  
\[  \theta_i^{\text {in}}\leq  {\overline \theta}(0) - \varepsilon. \]
We set 
\[
\delta_1 :=  \frac{\varepsilon}{\|K \|_{\infty,1}} > 0.
\]
For $t \in (0, \delta_1)$, we use the above relation and Lemma \ref{L2.2} to obtain 
\begin{equation} \label{C-1-1-0}
	\theta_i(t)\leq \theta_i^{\text {in}}+ \| K \|_{\infty, 1} t <   {\overline \theta}(0) -\varepsilon+ \| K \|_{\infty,1} \cdot \frac{\varepsilon}{\|K \|_{\infty,1}}  =  {\overline \theta}(0).
\end{equation}

\noindent $\diamond$~Case A.2:  Let $i \in {\mathbb N}$ be an index such that 
\[ \theta_i^{\text {in}}>  \overline{\theta}(0) -\varepsilon. \]
In this case, it is easy to see that $\theta_i^{\text {in}}= \overline{\theta}(0)$ and
\begin{align}
	\begin{aligned} \label{C-1-1-1}
		\frac{d^+}{dt} \Big|_{t = 0+} \theta_i  &= \sum_{j\in\mathbb{N}} \kappa_{ij}\sin(\theta_j^{\text {in}}-\theta_i^{\text {in}}) =\sum_{j\in\mathbb{N}} \kappa_{ij}\sin(\theta_j^{\text {in}}-  \overline{\theta}(0)) \\
		&=  \sum_{j\in {\mathcal I}_{\overline{\theta}(0)}} \kappa_{ij} \sin(\underbrace{\theta_j^{\text {in}}-  \overline{\theta}(0)}_{=0}) +  \sum_{j \notin {\mathcal I}_{\overline{\theta}(0)}}\kappa_{ij}\sin(\theta_j^{\text {in}}-  \overline{\theta}(0)) \\
		&= \sum_{j\notin {\mathcal I}_{\overline{\theta}(0)}}\kappa_{ij}\sin(\theta_j^{\text {in}}-  \overline{\theta}(0) ) \leq - \left(\sum_{j\notin {\mathcal I}_{\overline{\theta}(0)}}\kappa_{ij}\right)\sin\varepsilon ,
	\end{aligned}
\end{align}
where $\frac{d^+}{dt}$ is the Dini derivative, and we used  \eqref{C-1}, \eqref{C-1-1-1} and the relation:
\[  j \notin {\mathcal I}_{\overline{\theta}(0)} \quad \Longrightarrow \quad -\pi  + \varepsilon <  \theta_j^{\text {in}} -  \overline{\theta}(0) \leq -\varepsilon.   \]
On the other hand,  for $i \in {\mathcal I}_{\overline{\theta}(0)}$ and sufficiently small $t$ satisfying 
\begin{equation} \label{C-1-1-2}
	t < \delta_2:= \left(\sum_{j\notin {\mathcal I}_{\overline{\theta}(0)}}\tilde{\kappa}_j\right)\cdot \frac{\sin\varepsilon}{2 \| K \|_{\infty,1}}, 
\end{equation}
we apply \eqref{B-12} and \eqref{C-1-1-1} to obtain
\begin{align}
	\begin{aligned} \label{C-1-1-3}
		\dot{\theta}_i(t) &\leq \dot{\theta}_i(0)+  \Big( 2\|K \|_{\infty, 1}   \sum_{j\in\mathbb{N}} \kappa_{ij}  \Big) t  \\
		&<  \underbrace{\Bigg [  -\sum_{j\notin {\mathcal I}_{\overline{\theta}(0)}}\kappa_{ij} +   \Big( \sum_{j\in\mathbb{N}} \kappa_{ij} \Big) \Big(\sum_{j\notin {\mathcal I}_{\overline{\theta}(0)}}\tilde{\kappa}_j \Big) \Bigg]}_{<~ 0 \quad \mbox{from}~(\mathcal{F}2)}  \sin\varepsilon <0.
	\end{aligned}
\end{align}
Note that $\displaystyle\sum_{j\notin {\mathcal I}_{\overline{\theta}(0)}}\tilde{\kappa}_j$ is always strictly positive except the trivial case ${\mathcal I}_{\overline{\theta}(0)}=\mathbb{N}$, which we have already excluded by using the condition $\mathcal{D}(\Theta^{\text {in}})>0$. This guarantees the positivity of the constant $\delta_2$ in \eqref{C-1-1-2}. Finally, we set 
\begin{equation*}
	\delta := \min \{ \delta_1, \delta_2 \}  > 0,
\end{equation*}
and combine \eqref{C-1-1-0} and \eqref{C-1-1-3} to conclude the desired result. 

\vspace{0.5cm}

\noindent (2)~Suppose  that 
\begin{center}
	$\overline{\theta}(0)$ is a limit point of the set  $\{ \theta_i^{\text {in}}\}_{i \in {\mathbb N}}$.
\end{center}
In this case, we can exclude the singleton case with $\{ \theta_i^{\text {in}}\}_{i \in {\mathbb N}} =\left\{\overline{\theta}(0)\right\}$,  since every neighborhood of the limit point $\overline{\theta}(0)$ must contain a point of $\{ \theta_i^{\text {in}}\}_{i \in {\mathbb N}}$ other than $\overline{\theta}(0)$ itself. Therefore, there exists a natural number $i_0$ such that 
\[ \overline{\theta}(0)-\theta_{i_0}^{\text {in}}=:\varepsilon_0\in (0,\pi), \] so that the index set 
\[ {\mathcal I}_{[\underline{\theta}(0), \overline{\theta}(0)-\varepsilon_0]} :=\left\{i:\underline{\theta}(0) \leq \theta_i^{\text {in}}\leq \overline{\theta}(0)-\varepsilon_0  \right\} \]
is nonempty. Now, we define an auxiliary function $f:(0,\varepsilon_0)\to \mathbb{R}$ which will appear in \eqref{C-1-1-6}:
\[f(x) \equiv \left(\sum_{ j \in  {\mathcal I}_{[\underline{\theta}(0), \overline{\theta}(0)-\varepsilon_0]} }\tilde{\kappa}_j\right)\frac{\sin(\varepsilon_0-x)}{\sin x},\quad \forall~x\in (0,\varepsilon_0). \]
Then, it is easy to see that $f$ is a monotone decreasing continuous function such that 
\[  f^{\prime}(x)  < 0, \quad x \in (0, \varepsilon_0), \quad  \lim_{x\to 0+}f(x)=+\infty,\quad \lim_{x\to \varepsilon_0-}f(x)=0. \]
Therefore, there exists a unique $\varepsilon\in (0,\varepsilon_0)$ such that 
\begin{equation} \label{C-1-1-5}
	f(\varepsilon)=2, \quad \mbox{i.e.,} \quad \left(\sum_{ j \in  {\mathcal I}_{[\underline{\theta}(0), \overline{\theta}(0)-\varepsilon_0]} }\tilde{\kappa}_j\right) \sin(\varepsilon_0-\varepsilon) = 2 \sin \varepsilon.   
\end{equation}
\noindent $\diamond$~Case B.1: Let $i\in\mathbb{N}$ be an index such that 
\[  \theta_i^{\text {in}}\leq {\overline \theta}(0) -\varepsilon. \]
Then, for every positive $ t<\delta_1 = \frac{\varepsilon}{\|K \|_{\infty, 1}}  $, we use Lemma \ref{L2.2} to see
\[\theta_i(t)\leq \theta_i^{\text {in}}+ \| K \|_{\infty, 1} t < \overline{\theta}(0) -\varepsilon+ \| K \|_{\infty, 1}  \delta_1 = \overline{\theta}(0). \]

\noindent $\diamond$~Case B.2:~ Let $i \in {\mathbb N}$ be an index such that 
\[ \theta_i^{\text {in}}>  \overline{\theta}(0) -\varepsilon. \]
Note that the whole index set ${\mathbb N}$ can be rewritten as 
\begin{align*}
	\begin{aligned}
		{\mathbb N} &= \{i~:~\underline{\theta}(0) \leq \theta_i^{\text {in}}  \leq \overline{\theta}(0) - \varepsilon_0  \} \bigcup \{i:~ \overline{\theta}(0) - \varepsilon_0 < \theta_i^{\text {in}}  \leq \overline{\theta}(0) - \varepsilon \} \\
		&\hspace{0.5cm} \bigcup  \{ i:~\overline{\theta}(0) - \varepsilon < \theta_i^{\text {in}} \leq \overline{\theta}(0) \} \\
		&=: {{\mathcal I}_{[\underline{\theta}(0),  \overline{\theta}(0) - \varepsilon_0]}   \bigcup {\mathcal I}_{(\overline{\theta}(0) -\varepsilon_0,  \overline{\theta}(0) - \varepsilon]}  \bigcup  {\mathcal I}_{(\overline{\theta}(0) - \varepsilon,  \overline{\theta}(0)]}}.
	\end{aligned}
\end{align*}
Then, we have 
\begin{align}
	\begin{aligned} \label{C-1-1-6}
		&\frac{d^+\theta_i}{dt} \Big|_{t = 0+} \\
		&\hspace{0.5cm} { = \sum_{j \in  {\mathcal I}_{[\underline{\theta}(0),  \overline{\theta}(0) - \varepsilon_0]} }  \kappa_{ij} \sin(\theta_j^{\text {in}} - \theta_i^{\text {in}})  +   \sum_{j \in  {\mathcal I}_{(\overline{\theta}(0) -\varepsilon_0,  \overline{\theta}(0) - \varepsilon]} }  \kappa_{ij} \underbrace{\sin(\theta_j^{\text {in}} - \theta_i^{\text {in}})}_{\leq 0}} \\
		& \hspace{0.5cm} +  \sum_{j \in  {\mathcal I}_{(\overline{\theta}(0) - \varepsilon,  \overline{\theta}(0)]}}  \kappa_{ij} \sin(\theta_j^{\text {in}} - \theta_i^{\text {in}})         \\
		&\hspace{0.5cm}  \leq  \sum_{j \in  {\mathcal I}_{[\underline{\theta}(0),  \overline{\theta}(0) - \varepsilon_0]} } \kappa_{ij}  \sin(\theta_j^{\text {in}} - \theta_i^{\text {in}})  +   \sum_{j \in  {\mathcal I}_{(\overline{\theta}(0) - \varepsilon,  \overline{\theta}(0)]}}  \kappa_{ij}\sin(\theta_j^{\text {in}} - \theta_i^{\text {in}})  \\
		&\hspace{0.5cm}  \leq -\sum_{j \in  {\mathcal I}_{[\underline{\theta}(0),  \overline{\theta}(0) - \varepsilon_0]} } \kappa_{ij}  \sin(\varepsilon_0-\varepsilon)  +   \sum_{j \in  {\mathcal I}_{(\overline{\theta}(0) - \varepsilon,  \overline{\theta}(0)]}}  \kappa_{ij}\sin \varepsilon  \\
		&\hspace{0.5cm}  \leq -\sum_{j \in  {\mathcal I}_{[\underline{\theta}(0),  \overline{\theta}(0) - \varepsilon_0]} } \kappa_{ij}  \sin(\varepsilon_0-\varepsilon)  +   \sum_{j \in {\mathbb N}}  \kappa_{ij}\sin \varepsilon  \\
		&\hspace{0.5cm}  \leq \sum_{j\in \mathbb{N}} \kappa_{ij}\left(\sin\varepsilon- \sum_{j \in  {\mathcal I}_{[\underline{\theta}(0),  \overline{\theta}(0) - \varepsilon_0]} } \tilde{\kappa}_j\sin(\varepsilon_0-\varepsilon) \right)  \quad \mbox{from}~ (\mathcal{F}1)~\mbox{and}~\eqref{C-1-1-5} \\
		&\hspace{0.5cm}  =-\left(\sum_{j\in \mathbb{N}} \kappa_{ij}\right)\sin\varepsilon,
	\end{aligned} 
\end{align}
where we used \eqref{C-1-1-5} in the last equality. Now, we set 
\[ {\tilde \delta}_2 :=  \frac{\sin \varepsilon}{2 \| K \|_{\infty, 1}}. \]
Then,  for $t <{\tilde \delta}_2$,  we apply Lemma \ref{L2.2} as in Case A.2 to \eqref{C-1-1-6} to obtain
\begin{align*}
	\begin{aligned}
		\dot{\theta}_i(t) & \leq  \dot{\theta}_i(0)+ \Big( 2  \|K \|_{\infty, 1}  \sum_{j\in \mathbb{N}}\kappa_{ij} \Big) t < \dot{\theta}_i(0)+\left(\sum_{j\in \mathbb{N}} \kappa_{ij}\right)\sin\varepsilon\leq 0.
	\end{aligned}
\end{align*}
Finally, we define $\delta = \min\{ \delta_1, {\tilde \delta}_2 \}$ to get the desired result when $\overline{\theta}(0)$ is a limit point. \hfill$\qed$

\vspace{1cm}

\section{Proof of Theorem \ref{T5.2}} \label{App-C}
\setcounter{equation}{0}
By Proposition \ref{P5.4}, we may assume that there exists an entrance
time $t_{0}$ such that 
\[
\mathcal{D}(\Theta(t))\le\frac{\pi}{2}-\sin^{-1}\delta,\quad t\ge t_{0},
\]
for some $0<\delta<1$. For the derivation of desired exponential
decay, we split its proof into four steps. \\

$\bullet$ Step A (A differential inequality for some $\omega_{i}$):~We
set 
\[
\overline{\omega}(t):=\sup_{n\in\mathbb{N}}\omega_{n}(t).
\]
Let $i\in\mathbb{N}$ be an index such that 
\begin{equation}
	\omega_{i}(t_{0})>\frac{3}{4}\overline{\omega}(t_{0}).\label{E-21-0-0}
\end{equation}
Since 
\begin{equation}
	\sum_{j\in\mathbb{N}}\kappa_{j}\omega_{j}(t_{0})=0\quad\mbox{with}~\left\{ \omega_{i}\right\} _{i\in\mathbb{N}}\not\equiv\boldsymbol{0},\label{E-21-0}
\end{equation}
such $i$ exists. We set 
\begin{equation}
	J(t_{0}):=\left\{ j\in\mathbb{N}:~\omega_{j}(t_{0})\ge\omega_{i}(t_{0})\right\} .\label{E-21-1}
\end{equation}
Note that 
\begin{align}
	\begin{aligned}\dot{\omega}_{i}(t_{0}) & =\sum_{j\in\mathbb{N}}\kappa_{j}\cos\left(\theta_{i}(t_{0})-\theta_{j}(t_{0})\right)\left(\omega_{j}(t_{0})-\omega_{i}(t_{0})\right)\\
		& =\sum_{j\in J(t_{0})}\kappa_{j}\cos\left(\theta_{i}(t_{0})-\theta_{j}(t_{0})\right)\left(\omega_{j}(t_{0})-\omega_{i}(t_{0})\right)\\
		& \hspace{0.5cm}+\sum_{j\in\mathbb{N}\setminus J(t_{0})}\kappa_{j}\cos\left(\theta_{i}(t_{0})-\theta_{j}(t_{0})\right)\left(\omega_{j}(t_{0})-\omega_{i}(t_{0})\right)\\
		& :=\mathcal{I}_{21}+\mathcal{I}_{22}.
	\end{aligned}
	\label{E-22}
\end{align}
Below, we estimate the term $\mathcal{I}_{2i}$ with $i=1,2$. \\

$\diamond$~(Estimate of $\mathcal{I}_{21}$): We use \eqref{E-21-0}
and \eqref{E-21-1} to get 
\begin{align}
	\begin{aligned}\mathcal{I}_{21} & =\sum_{j\in J(t_{0})}\kappa_{j}\cos\left(\theta_{i}(t_{0})-\theta_{j}(t_{0})\right)\left(\omega_{j}(t_{0})-\omega_{i}(t_{0})\right)\\
		& \le\sum_{j\in J(t_{0})}\kappa_{j}\left(\omega_{j}(t_{0})-\omega_{i}(t_{0})\right)=-\sum_{j\in\mathbb{N}\setminus J(t_{0})}\kappa_{j}\omega_{j}(t_{0})-\sum_{j\in J(t_{0})}\kappa_{j}\omega_{i}(t_{0}).
	\end{aligned}
	\label{E-23}
\end{align}

\vspace{0.2cm}

$\diamond$~(Estimate of $\mathcal{I}_{22}$): Again we use \eqref{E-21-0}
to obtain 
\begin{align}
	\begin{aligned}\mathcal{I}_{22} & =\sum_{j\in\mathbb{N}\setminus J(t_{0})}\kappa_{j}\cos\left(\theta_{i}(t_{0})-\theta_{j}(t_{0})\right)\left(\omega_{j}(t_{0})-\omega_{i}(t_{0})\right)\\
		& \le\delta\sum_{j\in\mathbb{N}\setminus J(t_{0})}\kappa_{j}\left(\omega_{j}(t_{0})-\omega_{i}(t_{0})\right)=-\delta\sum_{j\in J(t_{0})}\kappa_{j}\omega_{j}(t_{0})-\delta\sum_{j\in\mathbb{N}\setminus J(t_{0})}\kappa_{j}\omega_{i}(t_{0}),
	\end{aligned}
	\label{E-24}
\end{align}
the equality in \eqref{E-24} holds by \eqref{E-21-0}. In \eqref{E-22},
we combine all the estimates \eqref{E-23} and \eqref{E-24} to get
\begin{align*}
	\dot{\omega}_{i}(t_{0}) & \le-\sum_{j\in J(t_{0})}\kappa_{j}\omega_{i}(t_{0})-\sum_{j\in\mathbb{N}\setminus J(t_{0})}\kappa_{j}\omega_{j}(t_{0})-\delta\sum_{j\in J(t_{0})}\kappa_{j}\omega_{j}(t_{0})-\delta\sum_{j\in\mathbb{N}\setminus J(t_{0})}\kappa_{j}\omega_{i}(t_{0})\\
	& =-\sum_{j\in J(t_{0})}\kappa_{j}\omega_{i}(t_{0})-\delta\sum_{j\in\mathbb{N}\setminus J(t_{0})}\kappa_{j}\omega_{i}(t_{0})-(1-\delta)\sum_{j\in\mathbb{N}\setminus J(t_{0})}\kappa_{j}\omega_{j}(t_{0})\\
	& =-\delta\|K\|_{\infty,1}\omega_{i}(t_{0})+(1-\delta)\sum_{j\in J(t_{0})}\kappa_{j}\left(\omega_{j}(t_{0})-\omega_{i}(t_{0})\right)<-\delta\|K\|_{\infty,1}\omega_{i}(t_{0}).
\end{align*}

\vspace{0.5cm}

$\bullet$ Step B~ (Estimate $\omega_{i}(t)$ for $t\ll1$): For
$i\in\mathbb{N}$ satisfying the relation \eqref{E-21-0-0}, we set
\begin{equation}
	C_{1}:=\frac{\delta}{16\left(\left\Vert \mathcal{V}\right\Vert _{\infty}+2\|K\|_{\infty,1}\right)}\min\left\{ 1,\frac{1}{\left\Vert K\right\Vert _{\infty,1}}\right\} .\label{E-25}
\end{equation}
In the sequel, we estimate $\omega_{i}$ in the time interval $[t_{0},t_{0}+C_{1}\overline{\omega}(t_{0})]$.
\\

First, we use Lemma \ref{L2.2} and \eqref{E-21-0-0} to find 
\begin{align*}
	\begin{aligned}\dot{\omega}_{i}(t) & \le\dot{\omega}_{i}(t_{0})+2\|K\|_{\infty,1}\left(\left\Vert \mathcal{V}\right\Vert _{\infty}+\|K\|_{\infty,1}\right)(t-t_{0})\\
		& \le-\frac{3\delta}{4}\|K\|_{\infty,1}\overline{\omega}(t_{0})+2\|K\|_{\infty,1}\left(\left\Vert \mathcal{V}\right\Vert _{\infty}+\|K\|_{\infty,1}\right)(t-t_{0})\\
		& \le-\frac{3\delta}{8}\|K\|_{\infty,1}\overline{\omega}(t_{0}),
	\end{aligned}
\end{align*}
where we used \eqref{E-25}. This yields 
\begin{equation}
	\omega_{i}(t)\le\overline{\omega}(t_{0})-\frac{3\delta}{8}\|K\|_{\infty,1}\overline{\omega}(t_{0})\left(t-t_{0}\right).\label{E-26}
\end{equation}
Next, we consider an index $i\in\mathbb{N}$ such that 
\[
\omega_{i}(t_{0})\le\frac{3}{4}\overline{\omega}(t_{0}).
\]
In this case, we use \eqref{E-25} to get 
\begin{align*}
	\omega_{i}(t) & \le\omega_{i}(t_{0})+2\|K\|_{\infty,1}\left(\left\Vert \mathcal{V}\right\Vert _{\infty}+2\|K\|_{\infty,1}\right)\left(t-t_{0}\right)\\
	& \le\frac{3}{4}\overline{\omega}(t_{0})+\frac{1}{8}\overline{\omega}(t_{0}).
\end{align*}
Hence we obtain
\begin{equation}
	\overline{\omega}(t)\le\max\left\{ \frac{7}{8},\left(1-\frac{3\delta}{8}\|K\|_{\infty,1}(t-t_{0})\right)\right\} \overline{\omega}(t_{0})\label{E-27}
\end{equation}
for $t_{0}\le t<t_{0}+C_{1}\overline{\omega}(t_{0})$.\\

$\bullet$ Step C~($\overline{\omega}(t)$ is nonincreasing for $t\ge t_{0}$):~Let
$t_{1}>t_{0}$ and 
\[
I=\left\{ t:\overline{\omega}(t)\le\overline{\omega}(t_{1})\right\} 
\]
in $[t_{1},\infty)$. By Lemma \ref{L2.2}, $\ddot{\Theta}=\dot{\mathcal{W}}$ is uniformly bounded and $\omega_{i}(t)$ is globally Lipschitz. Applying similar process with Lemma \ref{L2.2} gives $\overline{\omega}$ is also Lipschitz.
Hence $I$ is nonempty closed subset of $[t_{1},\infty)$, and Step
B proves that $I$ is open in $[t_{1},\infty)$. Hence $\overline{\omega}(t)$
is globally decreasing.\\

$\bullet$~Final step~($\overline{\omega}$ is exponentially decreasing for $t\ge t_{0}$.):
Let 
\[
t_{n}=t_{n-1}+C_{1}\overline{\omega}(t_{n-1}),\quad n\in\mathbb{N}.
\]
Then,
\begin{align*}
	& \left(1-\frac{3\delta}{8}\|K\|_{\infty,1}C_{1}\overline{\omega}(t_{n-1})\right)<\frac{7}{8}\iff\frac{1}{3\delta\|K\|_{\infty,1}C_{1}}<\overline{\omega}(t_{n-1})
\end{align*}
gives
\[
\overline{\omega}(t_{n})<\frac{7}{8}\overline{\omega}(t_{n-1})\iff\frac{1}{3\delta\|K\|_{\infty,1}C_{1}}<\overline{\omega}(t_{n-1})
\]
in \eqref{E-27}. Combined with
\[
t_{n+1}-t_{n}<t_{n}-t_{n-1}\iff\overline{\omega}(t_{n})<\overline{\omega}(t_{n-1}),
\]
we can conclude that 
\[
\overline{\omega}(t+n(t_{1}-t_{0}))<\overline{\omega}(t_{n})<\left(\frac{7}{8}\right)^{n}\overline{\omega}(t_{0})
\]
for
\[
n\in\left\{ m\in\mathbb{N}:\frac{1}{3\delta\|K\|_{\infty,1}C_{1}}<\overline{\omega}(t_{m})\right\} .
\]
Now it is enough to show the conclusion with assuming $\frac{1}{3\delta\|K\|_{\infty,1}C_{1}}\ge\overline{\omega}(t_{0})$.

We choose $k\in\mathbb{N}$ such that 
\[
\frac{1}{2^{k-1}}\ge\overline{\omega}\left(t_{0}\right)>\frac{1}{2^{k}}.
\]
Suppose that 
\[
\overline{\omega}\left(t\right)>\frac{1}{2^{k}}\quad\mbox{for \ensuremath{t>t_{0}}}.
\]
By induction on $n$, we can show that 
\[
\overline{\omega}(t)\le\overline{\omega}(\tilde{t}_{0,n})-\frac{3}{32}\|K\|_{\infty,1}\cdot\frac{1}{2^{k}}\cdot\left(t-\tilde{t}_{0,n}\right),\quad\tilde{t}_{0,n}\le t\le\tilde{t}_{0,n+1},
\]
where $\tilde{t}_{0,n}:=t_{0}+n\cdot\frac{C_{1}}{2^{k}}$. \\

This implies 
\[
\overline{\omega}(t)\le\overline{\omega}(t_{0})-\frac{3}{32}\|K\|_{\infty,1}\cdot\frac{1}{2^{k}}\cdot n,\quad\forall~n\in\mathbb{N},
\]
which is contradictory to $\overline{\omega}\left(t\right)>\frac{1}{2^{k}}$.
\\

Furthermore, we have 
\[
\overline{\omega}(t)\le\overline{\omega}(t_{0})-\frac{3}{32}\|K\|_{\infty,1}\cdot\frac{1}{2^{k}}\cdot n\le\frac{1}{2^{k-1}}-\frac{3}{32}\|K\|_{\infty,1}\cdot\frac{1}{2^{k}}\cdot n\le\frac{1}{2^{k}},
\]
for { $n\ge\lfloor\frac{32}{3\|K\|_{\infty,1}}\rfloor+1.$} \\

This implies 
\[
\inf\left\{ t:\overline{\omega}(t)\le\frac{1}{2^{k}}\right\} \le t_{0}+\Big\lfloor\frac{32}{3\|K\|_{\infty,1}}\Big\rfloor+1.
\]
Inductively, we can derive 
\begin{equation}
	0<\overline{\omega}(t)\le\frac{1}{2^{n}}\overline{\omega}(t_{0}),\qquad t\ge t_{0}+n\left(\Big\lfloor\frac{32}{3\|K\|_{\infty,1}}\Big\rfloor+1\right).\label{E-29}
\end{equation}
Similarly, we set 
\[
\underline{\omega}(t_{0}):=\inf_{n\in\mathbb{N}}\omega_{n}(t_{0}).
\]
This yields 
\begin{equation}
	0>\underline{\omega}(t)\ge\frac{1}{2^{n}}\underline{\omega}(t_{0}),\quad\mbox{for}~t\ge t_{0}+n\left(\Big\lfloor\frac{32}{3\|K\|_{\infty,1}}\Big\rfloor+1\right).\label{E-30}
\end{equation}
Finally, we combine \eqref{E-29} and \eqref{E-30} to find 
\[
\mathcal{D}\left(\mathcal{W}(t)\right)=\overline{\omega}(t)-\underline{\omega}(t)\le\frac{1}{2^{n}}\left(\overline{\omega}(t_{0})-\underline{\omega}(t_{0})\right)=\frac{1}{2^{n}}\mathcal{D}\left(\mathcal{W}(t_{0})\right),
\]
for~$t\ge t_{0}+n\left(\lfloor\frac{32}{3\|K\|_{\infty,1}}\rfloor+1\right).$
In this case, we have exponential synchronization: 
\[
\mathcal{D}\left(\mathcal{W}(t)\right)\le\mathcal{D}\left(\mathcal{W}(t_{0})\right)\cdot\exp\left[-\frac{3\|K\|_{\infty,1}\log2}{32}\left(t-t_{0}\right)+1\right],\quad t\ge t_{0}.
\]

\end{appendices}



\begin{thebibliography}{99}
	
	\bibitem{A-B} Acebron, J. A., Bonilla, L. L.,  P\'erez Vicente,  C. J. P., Ritort, F. and Spigler, R.: \textit{The Kuramoto model: A simple paradigm for synchronization phenomena}. Rev. Mod. Phys. \textbf{77}, 137-185 (2005)

\bibitem{A-B-F-H-K} Albi, G., Bellomo, N., Fermo, L., Ha, S.-Y., Kim, J., Pareschi, L., Poyato, D. and Soler, J.: \textit{Vehicular traffic, crowds and swarms: From kinetic theory and multiscale methods to applications and research perspectives}. Math. Models Methods Appl. Sci. {\bf 29}, 1901-2005 (2019)

\bibitem{B-C-P} Ball, J. M., Carr, J. and Penrose, O.: \textit{The Becker-Doring cluster equations: basic properties and asymptotic behaviour of solutions}. Comm. Math. Phys. {\bf 104}, 657-692 (1986)

\bibitem{B} Barbalat, I.: \textit{Syst\`{e}mes d\'{e}quations diff\'{e}rentielles
	doscillations non Lin\'{e}aires}. Rev. Math. Pures Appl. \textbf{4},
267--270 (1959)

\bibitem{B-C-M} Benedetto, D., Caglioti, E. and Montemagno, U.: \textit{On the complete phase synchronization for the Kuramoto model in the mean-field limit}. Commun. Math. Sci. {\bf 13}, 1775--1786 (2015)

\bibitem{Br} {Bramburger, J.: \textit{Stability of infinite systems of coupled oscillators via random walks on weighted graphs.} Trans. Amer. Math. Soc. {\bf 372}, 1159-1192 (2019) }

\bibitem{B-D-P}  Bronski, J., Deville, L. and Park, M. J.: \textit{Fully synchronous solutions and the synchronization phase transition for the finite-$N$ Kuramoto model}. Chaos {\bf 22}, 033133 (2012)

\bibitem{Bre} Brezis, H.: \textit{Functional Analysis, Sobolev Spaces
	and PDE}. pp. 184--185. Springer, New York (2011)


\bibitem{C} Cartan, H.: \textit{Differential calculus}. International studies in mathematics, Hermann (1983)


\bibitem{C-H-J-K} Choi, Y.-P., Ha, S.-Y., Jung, S. and Kim, Y.: \textit{Asymptotic formation and orbital stability of phase-locked states for the Kuramoto model}. Phys. D. \textbf{241}, 735--754 (2012)

\bibitem{C-S} Chopra, N. and Spong, M. W.: \textit{On exponential synchronization of Kuramoto oscillators}. IEEE Trans. Automatic Control {\bf 54}, 353--357 (2009)

\bibitem{Cu-S} Cucker, F. and Smale, S.: \textit{Emergent behavior in flocks}. IEEE Trans. Automat. Control {\bf 52}, 852-862 (2007)

\bibitem{D-X} Dong, J.-G. and Xue, X.: \textit{Finite-time synchronization of Kuramoto-type oscillators}.
Nonlinear Anal. Real World Appl. \textbf{26}, 133--149 (2015)

\bibitem{D-X2} Dong, J.-G. and Xue, X.: \textit{Synchronization analysis of Kuramoto oscillators}. Commun. Math. Sci. \textbf{11}, 465--480 (2013)

\bibitem {D-B1} D\"{o}rfler, F. and  Bullo, F.: \textit{On the critical coupling for Kuramoto oscillators.} SIAM. J. Appl. Dyn. Syst. \textbf{10}, 1070--1099 (2011)

\bibitem {D-B2} D\"{o}rfler, F. and Bullo, F.: \textit{Synchronization in complex networks of phase oscillators: A survey.}  Automatica {\bf 50}, 1539--1564 (2014)

\bibitem{Du} Dubovski, P. B.: \textit{Mathematical Theory of Coagulation.} Seoul National University, Research Institute of Mathematics. (1994)


\bibitem{H-H-K} Ha, S.-Y., Ha, T. and Kim, J.-H.: \textit{On the complete synchronization of the Kuramoto phase model}. Phys. D. \textbf{239}, 1692--1700 (2010)

\bibitem{H-J} Haraux, A., and Jendoubi, M. A.: \textit{The Lojasiewicz gradient inequality in the infinite-dimensional Hilbert space framework}. J. Funct. Anal. \textbf{260}, 2826--2842 (2011)

\bibitem{H-K-P-Z} Ha, S.-Y., Ko, D., Park, J. and Zhang, X.: \textit{Collective
	synchronization of classical and quantum oscillators}. EMS Surv. Math.
Sci. \textbf{3}, 209--267 (2016)

\bibitem{H-K-R} Ha, S.-Y., Kim, H.-K. and Ryoo, S.-W.: \textit{On the finiteness of collisions and phase-locked states for the Kuramoto model}. J. Stat. Phys. \textbf{163}, 1394--1424 (2016)


\bibitem{H-L-R-S} Ha, S.-Y., Lattanzio, C., Rubino, B. and Slemrod, M.: \textit{Flocking and synchronization of particle models.}
Quart. Appl. Math. {\bf 69}, 91-103 (2011)


\bibitem{H-L-X} Ha, S.-Y., Li, Z., Xue, X.: \textit{Formation of phase-locked states in a population of locally interacting Kuramoto oscillators}. Journal of Differential Equations \textbf{255}, 3053--3070 (2013)

\bibitem{H-Liu} Ha, S.-Y. and Liu, J.-G.: \textit{A simple proof of Cucker-Smale flocking dynamics and mean field limit}.
Commun. Math. Sci. {\bf 7}, 297--325 (2009)

\bibitem{H-N-P} Ha, S.-Y., Noh, S. E. and Park, J.: \textit{Synchronization of Kuramoto oscillators with adaptive couplings}. SIAM J. Appl. Dyn. Syst. \textbf{15}, 162--194 (2016)

\bibitem{H-N-P2} Ha, S.-Y., Noh, S. E. and Park, J.: \textit{Practical synchronization of generalized Kuramoto systems with an intrinsic dynamics.} Netw. Heterog. Media. \textbf{10}, 787-807  (2015)

\bibitem{H-R} Ha, S.-Y., Ryoo, S.-W.: \textit{Asymptotic phase-locking dynamics and critical coupling strength for the Kuramoto model}. Comm. Math. Phys. \textbf{377}, 811--857 (2020)

\bibitem{H-T} Ha, S.-Y. and Tadmor, E.: \textit{From particle to kinetic and hydrodynamic description of flocking}.
Kinetic Relat. Models {\bf 1}, 415--435 (2008)

\bibitem{H-S-D} Hirsch, M. W., Smale, S. and Devaney, R. L.: \textit{Differential
	equations, dynamical systems, and an introduction to chaos}. Academic
press (2012)

\bibitem{K-Y-K-S}  Kim, J., Yang, J., Kim, J. and Shim, H.:  \textit{Practical Consensus for Heterogeneous Linear Time-Varying Multi-Agent Systems.} In Proceedings of 12th International Conference on Control, Automation and Systems, Jeju Island, Korea (2012)

\bibitem{Ku} Kuramoto, Y.: \textit{Self-entrainment of a population
	of coupled non-linear oscillators}. International Symposium on Mathematical
Problems in Theoretical Physics, 420--422 (1975)

\bibitem{La} Lancellotti, C.: \textit{On the Vlasov limit for systems of nonlinearly coupled oscillators without noise}. Transport theory and statistical physics. \textbf{34} , 523--535 (2005)

\bibitem{L-X} { Li, Z. and Xue, X.: \textit{Convergence of analytic gradient-type systems with periodicity and its applications in Kuramoto models}. Appl. Math. Lett. {\bf 90}, 194--201 (2019)}

\bibitem{M-Z-C1} Ma, M., Zhou, J. and Cai, J.: \textit{Practical synchronization of second-order nonautonomous systems with parameter mismatch and its applications.} Nonlinear Dynam. {\bf 69}, 1285--1292 (2012)

\bibitem{M-Z-C2} Ma, M., Zhou, J. and Cai, J.: \textit{Practical synchronization of non autonomous systems with uncertain parameter mismatch via a single feedback control.} Int. J. Mod Phys C {\bf 23}, 1250073 14pp (2012)

\bibitem{Pe} Peskin, C. S.: \textit{Mathematical Aspects of Heart Physiology}. Courant Institute of Mathematical Sciences, New York University, 268-278, New York (1975)

\bibitem{P-R-K} Pikovsky, A., Rosenblum, M. and Kurths, J.: \textit{Synchronization: A universal concept in nonlinear sciences.} Cambridge: Cambridge University Press (2001)

\bibitem{Ru} Rudin, W.: \textit{Principles of Mathematical Analysis.} McGraw-Hill (1976)

\bibitem{Sl} Slemrod, M.: \textit{Trend to equilibrium in the Becker-Doring cluster equations}. Nonlinearity {\bf 2}, 429--443 (1989)

\bibitem{St} Strogatz, S. H.: \textit{From Kuramoto to Crawford:
	exploring the onset of synchronization in populations of coupled oscillators}.
Phys. D. \textbf{143}, 1--20 (2000)

\bibitem{V-M0} Verwoerd, M. and Mason, O.: \textit{A convergence result for the Kurmoto model with all-to-all couplings.} SIAM J. Appl. Dyn. Syst. {\bf 10}, 906--920 (2011)

\bibitem{V-M1} Verwoerd, M. and Mason, O.: \textit{On computing the critical coupling coefficient for the Kuramoto model on a complete bipartite graph.}  SIAM J. Appl.\ Dyn.\ Syst.\ {\bf 8}, 417--453  (2009)

\bibitem{V-M2} Verwoerd, M. and Mason, O.: \textit{Global phase-locking in finite populations of phase-coupled oscillators.} SIAM J. Appl.\ Dyn.\ Syst.\ {\bf 7}, 134--160  (2008)

\bibitem{W-X} Wang, X., Xue, X.: \textit{The flocking behavior of the infinite-particle Cucker-Smale model}. Proc. Amer. Math. Soc., \textbf{150}, 2165--2179  (2022)

\bibitem{Wi1} Winfree, A. T.: \textit{The geometry of biological time}. Springer, New York (1980)

\bibitem{Wi2} Winfree, A. T.: \textit{Biological rhythms and the behavior of populations of coupled oscillators}. J. Theor. Biol. \textbf{16}, 15--42 (1967)
\end{thebibliography}

\end{document}